\newtheoremstyle{break}
  {\topsep}{\topsep}%
  {\itshape}{}%
  {\bfseries}{}%
  {\newline}{}%
\theoremstyle{break}
\newtheorem{theorem}{Theorem}
\newtheorem{remark}{Remark}
\newtheorem{definition}{Definition}
\newtheorem{lemma}{Lemma}
\newtheorem{corollary}{Corollary}
\newtheorem{algorithm}{Algorithm}
\newcommand{\RNum}[1]{\uppercase\expandafter{\romannumeral #1\relax}}
\begin{document}
\title{Bootstrap prediction intervals with asymptotic conditional validity and unconditional guarantees}

\author{ Yunyi Zhang\\
 Department of Mathematics \\
      University of California, San Diego  \\
          La Jolla, CA 92093-0112, USA
\and
 Dimitris N. Politis  \\
  Department of Mathematics and \\
  Halicioglu Data Science Institute \\
      University of California, San Diego  \\
          La Jolla, CA 92093-0112, USA  \\
             dpolitis@ucsd.edu }

\maketitle
\abstract{It can be argued that optimal prediction should take into account
 all  available data.
 Therefore,    to evaluate a prediction interval's performance one
should employ conditional coverage probability, conditioning on all  available
observations.
Focusing on a linear model, we derive the asymptotic distribution of the
 difference between the conditional coverage probability of a nominal prediction interval and the
conditional coverage probability of a prediction interval obtained via a  residual-based bootstrap. Applying this result, we show that a prediction interval generated by the residual-based bootstrap has approximately $50\%$ probability to yield conditional under-coverage. We then  develop a new bootstrap algorithm that generates a prediction interval that asymptotically
controls both the  conditional coverage probability as
well as  the possibility of conditional under-coverage. We complement the asymptotic
results with several finite-sample simulations.}

\section{Introduction}
Statistical inference comes in two flavors: explaining the world and predicting the future state of the world. To explain the world based on data, statisticians create models like linear regression and use data to fit the models. After doing that, they will gauge the goodness-of-fit, and assess the accuracy of estimation, e.g., via confidence intervals of the fitted model. Focusing on regression, the  literature is huge; to pick 3-4 papers, see Shao \cite{doi:10.1080/01621459.1996.10476934} on model selection, Xie and Huang \cite{xie2009}  or Liu and Yu \cite{liu2013} on model fitting, and Freedman \cite{freedman1981} on
statistical analysis.

Prediction is not a new topic in statistical inference; we refer to Geisser \cite{PrdInference} for a comprehensive introduction, or Politis \cite{Model-based} for a more recent exposition. Notably, prediction has seen a resurgence in the 21st century
with the advent of statistical learning; see Hastie et al. \cite{ESl} for an introduction. Similarly to  the aforementioned linear model procedure, statisticians use data to fit a
model that can yield a  predictor for future observations, and use prediction intervals to quantify uncertainty in the prediction; see e.g. Romano et al. \cite{NEURIPS2019_5103c358}.
Under a regression setting, there are several ways to construct a prediction interval.
The classical prediction interval was typically  obtained
under a Gaussian assumption on the errors; see Section \ref{EXAMM}
in that follows.
One of the earliest methods
foregoing  the restrictive normality assumption
employed the residual-based bootstrap; see   Stine \cite{doi:10.1080/01621459.1985.10478220}
and the references therein. More recent methods
include the  \textit{Model-free} (MF) bootstrap and the hybrid  \textit{Model-free/Model-based} (MF/MB) bootstrap of Politis \cite{Model-based}.

For all bootstrap methods, the aim is to  provide an asymptotically
valid prediction interval.
suppose $\Gamma$ is a prediction interval for the future observation $y_f$.
If  $Prob(y_f\in \Gamma)\approx 1-\alpha$ (where $\approx$ indicates an
asymptotic approximation), then $\Gamma$
is an asymptotically valid  $1-\alpha$ prediction interval for   $y_f$.
 On the other hand, if we wish to ensure that  $Prob(y_f\in \Gamma)\geq 1-\alpha$,
i.e.,  an  {\it unconditional lower-bound guarantee},
 then we may apply the conformal prediction idea of Shafer and Vovk \cite{ConPrd} and Vovk et al. \cite{AlgorithmWord}, which has been applied to several complex models, including non-parametric regression; see Lei and Wasserman \cite{doi:10.1111/rssb.12021},  Lei et al. \cite{doi:10.1080/01621459.2017.1307116},
Romano et al. \cite{NEURIPS2019_5103c358}, and Sesia and Cand\`es \cite{doi:10.1002/sta4.261}.

In the paper at hand, we assume a linear model and
  discuss how to construct
an asymptotically valid prediction interval in the context of conditional coverage that also possesses some unconditional guarantees as discussed above.
 To be more concrete, suppose we have an $n\times p$ design matrix $X$, independent and identically distributed residuals $\epsilon = (\epsilon_1,...,\epsilon_n)^T$, dependent variables $y = (y_1,...,y_n)^T$
where $y = X\beta + \epsilon$ and a fixed new regressor $x_f$
that is of interest. We would like to provide a $1-\alpha$ prediction interval $\Gamma = \Gamma(X, y, x_f)$ for the future observation $y_f = x_f^T\beta + \varepsilon$; here $\varepsilon$ is independent of $X,y$ and has the same distribution as $\epsilon_1$. The aforementioned bootstrap methods will ensure that $Prob\left(y_f\in\Gamma\right)\approx 1-\alpha$, but without a lower-bound guarantee.
On the other hand,
  the conformal prediction method yields an interval $\Gamma$ such that
  $Prob\left(y_f\in\Gamma\right)\geq 1-\alpha$, i.e., an {\it unconditional}
lower-bound guarantee. However,
we are more  interested in quantifying the performance of a prediction interval
in terms of its  conditional coverage probability $Prob\left(y_f\in\Gamma| y\right)$(or $Prob\left(y_f\in\Gamma| y, x_f, X\right)$ under random design).

 The reason for our interest comes from two aspects. On one hand, the conditional probability precisely describes how statisticians make prediction in practice. By  using the unconditional probability
\begin{equation}
Prob\left(y_f\in\Gamma\right) = E\left(Prob\left(y_f\in\Gamma|y\right)\right)
\label{COND}
\end{equation}
it is as if we assume that the statistician  has not observed $y$ before making the prediction.

Realistically, however, statisticians have observed $y$ and have fitted the model before they make predictions. Therefore, it is informative   to understand what happens to $y_f$
given our knowledge  
of all data (including $y$) rather than on ``average" among all possible $y$.

On the other hand, according to eq.  \eqref{COND}, analysis of the conditional probability is  a  more fundamental topic than the unconditional one. For example, if for any given $\xi>0$, $Prob\left(\{\vert Prob\left(y_f\in\Gamma|y\right) - (1-\alpha)\vert > \xi\}\right)\to 0$
as $n\to \infty$,  
then we can take the conditional expectation and have

\begin{equation}
\begin{aligned}
\vert Prob\left(y_f\in\Gamma\right) - (1-\alpha)\vert\leq E\left(\vert Prob\left(y_f\in\Gamma|y\right) - (1-\alpha)\vert\right)
\leq \xi + Prob\left(\{\vert Prob\left(y_f\in\Gamma|y\right) - (1-\alpha)\vert > \xi\}\right)
\end{aligned}
\end{equation}
which implies $Prob\left(y_f\in\Gamma\right)\to 1-\alpha$. 

Consequently, the
aforementioned performance goals of asymptotic validity and lower bound guarantee
should be recast in terms of conditional coverage.
Note, however, that  $Prob\left(y_f\in\Gamma|y\right)$ is a random variable itself -- see e.g. definition 1.3 in \c{C}inlar \cite{Stochastic}.
Hence, the performance goals are now stochastic, i.e., $Prob\left(y_f\in\Gamma|y\right)\to_p 1-\alpha$ and $Prob\left(y_f\in\Gamma|y\right)\geq 1-\alpha$ with a specific probability. Surprisingly, we can achieve these goals simultaneously through a careful re-design of our prediction intervals. Definition \ref{def1} in what follows describes our new performance
aim. Before stating it, however, we need to clarify our notation since our results hold true for
both fixed and random design. In the latter case, however, all probabilities and
expectations will be understood as being conditional on $X$; see
Definition \ref{defPP} below.

\begin{definition}
Consider the two cases:
\\ (a) {\bf Fixed design}, i.e., there is no randomness involved in the design matrix $X$ and the new regressor $x_f$. In this case,   we define $\mathbf{P}(\cdot) = Prob(\cdot)$, $\mathbf{P}^*(\cdot) = Prob(\cdot|y)$, $\mathbf{E}\cdot = E\cdot$, and $\mathbf{E}^*\cdot = E(\cdot|y)$.
\\ (b) {\bf Random design}, i.e., there is   randomness involved in  the design matrix $X$ (and possibly   in the new regressor $x_f$ as well).
In this case,   we define  $\mathbf{P}(\cdot) = Prob(\cdot|X,x_f)$, $\mathbf{P}^*(\cdot) = Prob(\cdot|y, X, x_f)$, $\mathbf{E}\cdot = E(\cdot|X, x_f)$, and $\mathbf{E}^*\cdot = E(\cdot|y,X,x_f)$.
Furthermore, convergences and probability statements will be understood to hold
almost surely in $X $ and $  x_f $.
\label{defPP}
\end{definition}

We can now state our new performance aims in general.

\begin{definition}[Prediction interval with unconditional guarantee]
Assume an  $n\times p$ design matrix $X$, independent and identically distributed residuals $\epsilon =(\epsilon_1,...,\epsilon_n)^T\in\mathbf{R}^n$, and that the dependent variables $y$ satisfy a linear model $y = X\beta + \epsilon$.
For a regressor $x_f\in\mathbf{R}^p$ and a potential future observation
$y_f$, we say that $\Gamma = \Gamma(X,y, x_f)$ is the $1-\alpha$ prediction interval with $1-\gamma$ unconditional guarantee if the following conditions hold true:

1. For any given $\xi>0$,
\begin{equation}
\mathbf{P}\left(\{\vert\mathbf{P}^*\left(y_f\in\Gamma\right) - (1-\alpha)\vert > \xi\}\right)\to 0
\end{equation}

2.
\begin{equation}
\mathbf{P}\left(\{\mathbf{P}^*(y_f\in\Gamma)\geq 1-\alpha\}\right)\to 1-\gamma
\label{ALLNEED}
\end{equation}
as $n\to \infty$; here, $\alpha,\gamma $ are constants in $(0,1)$. We call $1-\alpha$ the nominal (conditional) coverage probability and $1-\gamma$ the {\it guarantee level}.
\label{def1}
\end{definition}

Definition \ref{def1} does not provide a free lunch; to see why, we present a simulation and record quantiles of conditional coverage probabilities and guarantee levels for the aforementioned methods in table \ref{EXAMP}. The majority of them do not have a high guarantee level, which leads to the possibility of conditional under-coverage.

\begin{table}[htbp]
  \centering
  \caption{Quantiles of conditional coverage probabilities and guarantee levels of prediction intervals on the \textbf{Experiment model} (see section \ref{Numerics} and table \ref{tableMain}) with `Normal' errors; the nominal coverage probability is $95\%$. We use the R-package maintained by Tibshirani \cite{Rpackage} to perform conformal predictions.}
  \scriptsize
  \begin{tabular}{l l l l l l l}
  \hline\hline
  Sample size & Algorithm                        & \multicolumn{4}{l}{Quantiles of coverage probabilities} & Guarantee level\\
              &                                  &      $25\%$    & $45\%$    & $65\%$   &  $85\%$         &         \\
  100         & Residual bootstrap               &      $91.0\%$  & $92.8\%$  & $94.2\%$  &  $95.6\%$      & $23.6\%$\\
              & \textit{MF/MB} bootstrap         &      $95.5\%$  & $96.6\%$  & $97.4\%$  &  $98.3\%$      & $80.6\%$\\
              & Conformal prediction             &      $92.4\%$  & $94.2\%$  & $95.6\%$  &  $97.0\%$      & $44.2\%$\\
              & Split conformal prediction       &      $95.9\%$  & $97.7\%$  & $98.7\%$  &  $99.5\%$      & $80.7\%$\\
              & Jackknife conformal prediction   &      $93.2\%$  & $94.9\%$  & $96.1\%$  &  $97.4\%$      & $53.0\%$\\
  \hline
  400         & Residual bootstrap               &      $93.6\%$  & $94.4\%$  & $95.0\%$  &  $95.8\%$      & $36.6\%$\\
              & \textit{MF/MB} bootstrap         &      $94.6\%$  & $95.3\%$  & $95.9\%$  &  $96.6\%$      & $64.2\%$\\
              & Conformal prediction             &      $93.0\%$  & $93.9\%$  & $94.6\%$  &  $95.4\%$      & $24.9\%$\\
              & Split conformal prediction       &      $94.3\%$  & $95.3\%$  & $96.2\%$  &  $97.0\%$      & $62.8\%$\\
              & Jackknife conformal prediction   &      $94.2\%$  & $95.0\%$  & $95.5\%$  &  $96.2\%$      & $53.8\%$\\
  \hline
  1600         & Residual bootstrap              &      $94.3\%$  & $94.7\%$  & $95.2\%$  &  $95.7\%$      & $42.6\%$\\
              & \textit{MF/MB} bootstrap         &      $94.6\%$  & $95.0\%$  & $95.4\%$  &  $95.9\%$      & $54.4\%$\\
              & Conformal prediction             &      $93.3\%$  & $93.8\%$  & $94.4\%$  &  $94.9\%$      & $12.4\%$\\
              & Split conformal prediction       &      $94.5\%$  & $95.0\%$  & $95.4\%$  &  $96.0\%$      & $42.6\%$\\
              & Jackknife conformal prediction   &      $94.5\%$  & $94.9\%$  & $95.3\%$  &  $95.7\%$      & $54.4\%$\\
  \hline\hline
  \end{tabular}
  \label{EXAMP}
\end{table}

Our paper has two main contributions. On the one hand, it derives the Gaussian approximation for the difference between the conditional probability of a nominal prediction interval and the conditional probability of a prediction interval based on residual-based bootstrap. 
 In practice, bootstrap approximates the former by the latter, and the non-zero difference will make the   former deviate
 from $1-\alpha$. This   leads to the fact that the  \textit{residual-based bootstrap algorithm asymptotically has guarantee level of $50\%$}. On the other hand, we develop a
new method to construct a prediction interval satisfying definition \ref{def1} with arbitrarily chosen $\alpha,\gamma$.

We employ a simple example to illustrate why a classical prediction interval becomes problematic under the conditional coverage context in section \ref{EXAMM}. After that, we introduce the frequently used notations and assumptions in section \ref{chp12}. In section \ref{ch2},  we derive the Gaussian approximation result. In section \ref{chp3}, we develop the algorithm to construct the newly proposed prediction interval. We perform some simulations to illustrate the proposed algorithm's finite sample performance in section \ref{Numerics}, and provide some conclusions in section \ref{ch5}.

\section{An intuitive illustration in the Gaussian case}
\label{EXAMM}
For the sake of illustration, in this section only we suppose the residual $\epsilon_1$ has a normal distribution with mean $0$ and known variance $\sigma^2$.   Denote $d_\alpha$
the $\alpha$-th quantile and $\Phi(x)$ the cumulative distribution function of the standard normal distribution respectively, i.e., $d_\alpha = \Phi ^{-1} (\alpha)$; and adopt the notations $\mathbf{P},\mathbf{P}^*$ in definition \ref{defPP}.
If we do not care about the conditional coverage, we can define $\widehat{\beta} = (X^TX)^{-1}X^Ty$ and use the
normal distribution  $1-\alpha$ prediction interval $\mathcal{P}_1 = [x_f^T\widehat{\beta} + \sigma d_{\alpha / 2}\sqrt{1 + x_f^T(X^TX)^{-1}x_f} , x_f^T\widehat{\beta} + \sigma d_{1 - \alpha / 2}\sqrt{1 + x_f^T(X^TX)^{-1}x_f}]$ for the future response $y_f$. Since the random variable $y_f - x_f^T\widehat{\beta}$ has normal distribution with mean $0$ and variance $ \sigma^2(1 + x_f^T(X^TX)^{-1}x_f)$, it follows that
\begin{equation}
\mathbf{P}\left(y_f \in \mathcal{P}_1\right) = \mathbf{P}\left(d_{\alpha / 2}\leq \frac{y_f - x_f^T\widehat{\beta}}{\sigma\sqrt{1 + x_f^T(X^TX)^{-1}x_f}}\leq d_{1-\alpha / 2}\right) = 1 -\alpha.
\label{OOOSS}
\end{equation}
In other words,  $\mathcal{P}_1$ has precise unconditional coverage probability. However, if we take the conditional coverage into consideration, the random variable $y_f - x_f^T\widehat{\beta}|y$ (or $y_f-x_f^T\widehat{\beta}|y,x_f,X$ under random design) has normal distribution with mean $x^T_f\beta - x_f^T(X^TX)^{-1}X^Ty$ and variance $\sigma^2$. According to Taylor's theorem,
\begin{equation}
\begin{aligned}
\mathbf{P}^*\left(y_f\in P_1\right) = \mathbf{P}^*\left(d_{\alpha / 2}\leq \frac{y_f - x_f^T\widehat{\beta}}{\sigma\times \sqrt{1 + x_f^T(X^TX)^{-1}x_f}}\leq d_{1 - \alpha /2}\right)\\
= \Phi\left(\sqrt{1 + x_f^T(X^TX)^{-1}x_f}d_{1 - \alpha / 2} + \frac{x_f^T(X^TX)^{-1}X^T\epsilon}{\sigma}\right) - \Phi\left(\sqrt{1 + x_f^T(X^TX)^{-1}x_f}d_{\alpha / 2} + \frac{x_f^T(X^TX)^{-1}X^T\epsilon}{\sigma}\right)\\
\approx 1 - \alpha + \Phi^{'}(d_{1 - \alpha / 2})\times x_f^T(X^TX)^{-1}x_f \times d_{1-\alpha / 2} + \Phi^{''}(d_{1-\alpha / 2})\times(\frac{x_f^T(X^TX)^{-1}X^T\epsilon}{\sigma})^2
\end{aligned}
\label{JKLS}
\end{equation}
Since $\Phi^{''}(d_{1-\alpha / 2}) < 0$, $\frac{(x_f^T(X^TX)^{-1}X^T\epsilon)^2}{\sigma^2(x_f^T(X^TX)^{-1}x_f)}$ has $\chi^2_1$ distribution and $\Phi^{''}(x) = -x\Phi^{'}(x)$ for any $x$,
\begin{equation}
\mathbf{P}\left(\{\mathbf{P}^*\left(y_f\in \mathcal{P}_1\right)\geq 1-\alpha\}\right)\approx \mathbf{P}\left(\frac{(x^T_f(X^TX)^{-1}X^T\epsilon)^2}{\sigma^2 x_f^T(X^TX)^{-1}x_f}\leq \frac{\Phi^{'}(d_{1-\alpha / 2})\times d_{1-\alpha / 2}}{-\Phi^{''}(d_{1-\alpha / 2})}\right) \approx 0.683
\end{equation}
Therefore, the prediction interval $\mathcal{P}_1$ only has about $68\%$ guarantee level.

However, it is possible to find a prediction interval with a desired guarantee level,
say $1-\gamma$. We define $C_{1-\gamma}$ as the $1-\gamma$ quantile of a $\chi^2_1$ distribution, and let $c_{1-\gamma} = -\Phi^{''}(d_{1 - \alpha / 2})x_f^T(X^TX)^{-1}x_f\times C_{1-\gamma} / (2\Phi^{'}(d_{1-\alpha / 2}))>0$. We construct the prediction interval $\mathcal{P}_2 = [x_f^T\widehat{\beta} + \sigma\times (d_{\alpha / 2} - c_{1-\gamma}),\ x_f^T\widehat{\beta} + \sigma\times (d_{1 - \alpha / 2} + c_{1-\gamma})]$. We can now compute
\begin{equation}
\begin{aligned}
\mathbf{P}^*\left(y_f \in \mathcal{P}_2\right)= \mathbf{P}^*\left(d_{\alpha / 2} - c_{1-\gamma}\leq \frac{y_f - x_f^T\widehat{\beta}}{\sigma}\leq d_{1 - \alpha / 2} + c_{1-\gamma}\right)\\
= \Phi\left(d_{1 - \alpha / 2} + c_{1-\gamma} + \frac{x_f^T(X^TX)^{-1}X^T\epsilon}{\sigma}\right) - \Phi\left(d_{\alpha / 2} - c_{1-\gamma} + \frac{x_f^T(X^TX)^{-1}X^T\epsilon}{\sigma}\right)\\
\approx 1 - \alpha + 2\Phi^{'}(d_{1-\alpha / 2})c_{1-\gamma} + \Phi^{''}(d_{1-\alpha / 2})\times(\frac{x_f^T(X^TX)^{-1}X^T\epsilon}{\sigma})^2\\
\mbox{which implies that } \ \  \mathbf{P}\left(\{\mathbf{P}^*\left(y_f\in \mathcal{P}_2\right)\geq 1-\alpha\}\right)\\
\approx \mathbf{P}\left(-\Phi^{''}(d_{1-\alpha / 2})\frac{(x_f^T(X^TX)^{-1}X^T\epsilon)^2}{\sigma^2}\leq -\Phi^{''}(d_{1-\alpha / 2})x_f^T(X^TX)^{-1}x_fC_{1-\gamma}\right) = 1 - \gamma .
\end{aligned}
\label{JXT}
\end{equation}
Hence, prediction interval $\mathcal{P}_2 $ has guarantee level $1-\gamma$.
Note that since $c_{1-\gamma}$ has order $O(1/n)$, this correction does
not significantly enlarge the length of the prediction interval.

  In general, however,  the marginal distribution of the errors can not be
assumed to be normal. As a consequence,  we need to use resampling to find
a  satisfactory correction; this will be the subject of the following sections.

\section{Preliminary notions}
\label{chp12}
For the remainder of the paper, we  revert to the general setup:  a   design matrix $X$
(assumed to have full-rank), the dependent variable $y$ satisfying the linear model $y = X\beta + \epsilon$ with respect to the i.i.d. errors $\epsilon = (\epsilon_1,...,\epsilon_n)^T$; here,  $\epsilon_1$ has mean zero,
 variance $\sigma^2$, and  cumulative distribution function denoted by $F$. We denote $X^T = (x_1,...,x_n)$, $x_i = (x_{i1},...,x_{ip})^T\in\mathbf{R}^p, $ 
 and define the estimated residual $\widehat{\epsilon}^{'} = (\widehat{\epsilon}^{'}_1,...,\widehat{\epsilon}^{'}_n)^T$, centered estimated residual $\widehat{\epsilon} = (\widehat{\epsilon}_1,...,\widehat{\epsilon}_n)^T$ and residual empirical process $\widehat{F}(x)$ for any $x\in\mathbf{R}$ respectively as
\begin{equation}
\begin{aligned}
\widehat{\epsilon}_i^{'} = y_i-x_i^T\widehat{\beta} = \epsilon_i-x_i^T(\widehat{\beta}-\beta)\\
\widehat{\epsilon}_i=\widehat{\epsilon}_i^{'}-\frac{1}{n}\sum_{i=1}^n\widehat{\epsilon}^{'}_i\\
\widehat{F}(x) = \frac{1}{n}\sum_{i=1}^n\mathbf{1}_{\widehat{\epsilon}_i\leq x} .
\label{DefResi}
\end{aligned}
\end{equation}
Denote
$\widehat{\beta} = (X^TX)^{-1}X^Ty$ as the least square estimator of parameter
vector  $\beta$. We also denote $\widehat{\lambda} = \frac{1}{n}\sum_{i=1}^n\widehat{\epsilon}_i^{'} = \frac{1}{n}\sum_{i=1}^n\epsilon_i-\overline{x}_n^T(\widehat{\beta}-\beta)$, and $\overline{x}_n = \frac{1}{n}\sum_{i=1}^n x_i$. From \eqref{DefResi}, we have
\begin{equation}
\int xd\widehat{F} = \frac{1}{n}\sum_{i=1}^n\widehat{\epsilon}_i = 0,\ \  \widehat{\sigma}^2 = \int x^2d\widehat{F} = \frac{1}{n}\sum_{i=1}^n\widehat{\epsilon}_i^2 .
\label{sigmaHat}
\end{equation}

We denote $\mathbf{D} = \mathbf{D}[0,1]$   the space of \textit{c\`adl\`ag} functions on $[0,1]$ with Skorohod topology--see chapter 3 of Billingsley \cite{billing}.

To derive our results, we require the following assumptions:

\begin{enumerate}

\item  $\epsilon_1$'s distribution is absolutely continuous with respect to Lebesgue measure. $F$ is second order continuous differentiable and $\sup_{x\in\mathbf{R}}\vert F^{''}(x)\vert<\infty$, $\mathbf{E}\epsilon_1 = 0$, $\mathbf{E}\vert\epsilon_1\vert^4<\infty$. The new regressor $x_f\in\mathbf{R}^p$ and the new dependent variable $y_f$ satisfy $y_f = x_f^T\beta + \varepsilon$. $\varepsilon$ is independent of $\epsilon$ and has the same distribution as $\epsilon_1$.

\item  One of the two following conditions holds true:

\quad 2.1) {\bf Fixed design:} $X$ and $x_f$ are fixed, i.e., non-random.

\quad 2.2) {\bf  Random design:} $X$ and $x_f$ are random. However, $x_f$ is independent of $\epsilon,\varepsilon$; and $X$ is independent of $\epsilon,\varepsilon, x_f$.

\item  $X^TX$ is invertible for $\forall n\geq p$ and $\lim_{n\to\infty} \frac{X^TX}{n} = A$, $\lim_{n\to\infty}\overline{x}_n = b$; here $A$ is an invertible matrix and $b\in\mathbf{R}^p$. Besides, there exists a constant $M > 0$ such that $\Vert x_i\Vert_2\leq M$ for $i=1,2,...,n$ and $\Vert x_f\Vert_2\leq M$. $\Vert.\Vert_2$ denotes the Euclidean norm.

We define $H(x) = \mathbf{E}\epsilon_1\mathbf{1}_{\epsilon_1\leq x}$ and for $\forall x,z\in\mathbf{R}$,
\begin{equation}
\mathcal{V}(x,z) = \sigma^2F^{'}(x)F^{'}(z)\left(x_f^TA^{-1}x_f+1-2x_f^TA^{-1}b\right) - (F^{'}(x)H(z)+F^{'}(z)H(x))(x_f^TA^{-1}b - 1) + F(\min(x,z)) - F(x)F(z)
\label{VarFun}
\end{equation}
We also define $\mathcal{U}(x) = \mathcal{V}(x,x) + \mathcal{V}(-x,-x) - 2\mathcal{V}(x,-x)$.

\item $F^{'}(x)>0,\forall x\in\mathbf{R}$, and $\mathcal{U}(x)> 0, \forall 0< x<\infty$.
\end{enumerate}

\noindent
For a function $f$ and a point $x\in\mathbf{R}$, we define $f^{-}(x) = \lim_{y\to x,y<x}f(y)$ if this limit exists. Note that $f\in\mathbf{D}$ implies that $ f^{-}(x)$ exists for $\forall x\in(0,1)$. As in section 1.1.4 of Politis et al. \cite{subsampling}, for any   $0<\alpha<1$, we define the  $\alpha$ quantile of a  cumulative distribution function $f$ as
\begin{equation}
c_{\alpha} = \inf\{x\in\mathbf{R}|f(x)\geq \alpha\} .
\label{HJH}
\end{equation}

The meaning of notations $\mathbf{P}, \mathbf{P}^*, \mathbf{E}, \mathbf{E}^*$ is presented in definition \ref{defPP}.
The symbol $\to$ represents convergence in $\mathbf{R}$, and $\to_{\mathcal{L}}$ represents convergence in distribution. Without being specified, the convergence assumes the sample size $n\to\infty$. $\Phi$ represents the cumulative distribution function of the standard normal distribution. In the case of random design, the convergence results hold true for almost sure $X$ and $x_f$.

\begin{remark}
(a) We centered the estimated residuals $\widehat{\epsilon}^{'}$ in eq. \eqref{DefResi}, but if the design matrix $X$ has a column of ones, then summation of the estimated residuals will be $0$ exactly, and  re-centering is superfluous.
\\ (b) In the case of random design, we assume assumption 3. and 4. happen for almost sure $X$ and $x_f$.
\end{remark}

\section{Gaussian approximation in bootstrap prediction}
\label{ch2}

Residual-based bootstrap has been widely used in interval prediction for various models, such as Thombs and Schucany \cite{doi:10.1080/01621459.1990.10476225},  and Li and Politis \cite{PAN2016467}. Stine \cite{doi:10.1080/01621459.1985.10478220} introduced a residual-based bootstrap algorithm for prediction, but this algorithm is typically characterized by finite sample undercoverage; see  Li and Politis \cite{PAN20161}. To alleviate the finite-sample undercoverage, Politis \cite{Model-based} proposed the {\it Model-free/Model-Based (MF/MB) bootstrap}, that
resamples  the  {\it predictive} residuals $\widehat{r} = (\widehat{r}_1,...,\widehat{r}_n)^T$
  instead of the usual fitted residuals.
The predictive  residuals are sometimes called the `leave-one-out' residuals, and are defined as:
\begin{equation}
\begin{aligned}
\widehat{r}_i^{'} = y_i - x_i^T(X^T_{-i}X_{-i})^{-1}X^T_{-i}y_{-i},\ \
\widehat{r}_i = \widehat{r}_i^{'} - \frac{1}{n}\sum_{i=1}^n\widehat{r}_i^{'},\ i=1,2,...,n\\
\text{where $X_{-i}$ and $y_{-i}$ are  the design matrix $X$ and the dependent variable
vector $y$ respectively, having left out the $i$th row.}
\end{aligned}
\label{PRD}
\end{equation}

For concreteness, the algorithms are as follows:
\begin{algorithm}[Residual-based bootstrap]
\textbf{Input: } Design matrix $X$ and dependent variable data vector $y$
satisfying $y = X\beta + \epsilon$, the new regression vector $x_f$
of interest, number of bootstrap replicates $B$, nominal coverage probability $1-\alpha$

1) Calculate statistics $\widehat{\beta} = (X^TX)^{-1}X^Ty$ and $\widehat{\epsilon} = (\widehat{\epsilon}_1,...,\widehat{\epsilon}_n)^T$ as in eq.  \eqref{DefResi}.

2) Generate i.i.d. residuals $\epsilon^* = (\epsilon_{1}^*,...,\epsilon^*_{n})^T$ and $\varepsilon^*$ by drawing from $\widehat{\epsilon}_1,...,\widehat{\epsilon}_n$ with replacement, then calculate $y^* = X\widehat{\beta} + \epsilon^*$ and $y^*_f = x_f^T\widehat{\beta} + \varepsilon^*$. Re-estimate $\widehat{\beta}^* = (X^TX)^{-1}X^Ty^*$ and calculate the prediction root $\delta^*_b = y^*_{f} - x_f^T\widehat{\beta}^*$

3) Repeat 2) for $b = 1,2,...,B$, and calculate the $1-\alpha$ (unadjusted) sample quantile $\widehat{c}^*_{1-\alpha}$ of $\vert\delta^*_b\vert$, $b=1,2,...,B$.

4) The prediction interval of $y_f$ is given by $\left\{y_f\Big|\vert y_f - x^T_f\widehat{\beta}\vert\leq \widehat{c}^*_{1-\alpha}\right\}$
\label{alg1}
\end{algorithm}

\begin{remark}
If we replace $\widehat{\epsilon}$ by   $\widehat{r}$ in algorithm \ref{alg1},
  we then obtain  the \textit{MF/MB bootstrap} algorithm.
\end{remark}

The Glivenko - Cantelli theorem ensures the empirical process of the bootstrapped prediction root $y_f^* - x_f^T\widehat{\beta}^*$ converges to $\mathbf{P}^*\left( y_f^* - x_f^T\widehat{\beta}^*\leq x\right)$ for any $x\in\mathbf{R}$ $\mathbf{P}^*$ almost surely as $B\to\infty$. Therefore, the residual-based bootstrap approximates the unobservable function $\mathbf{P}^*(\vert y_f-x_f^T\widehat{\beta}\vert\leq x)$ by $\mathbf{P}^*\left(\vert y_f^* - x_f^T\widehat{\beta}^*\vert\leq x\right)$, and estimates the latter distribution by the bootstrapped prediction root's empirical process; see Politis et al. \cite{subsampling}. This approximation introduces an error; we will now derive  the asymptotic distribution of the error
\begin{equation}
\mathcal{S}(x) = \sqrt{n}\left(\mathbf{P}^*(\vert y_f-x_f^T\widehat{\beta}\vert\leq x) - \mathbf{P}^*(\vert y_f^* - x_f^T\widehat{\beta}^*\vert\leq x)\right)
\label{Sxxxx}
\end{equation}
where
$y_f^*, \widehat{\beta}^*$ are defined in algorithm \ref{alg1}.
We refer to Bickel and Freedman \cite{10.2307/2240410} and Politis et al.\cite{subsampling} for related results.

For any given positive integer $m$, we define a Gaussian process $\mathcal{M}_m(x),x\in[0,1]$ in $\mathbf{D}$ with
\begin{equation}
\begin{aligned}
\mathbf{E}\mathcal{M}_m(x) = 0,\ \  \mathbf{E}\mathcal{M}_m(x)\mathcal{M}_m(z) =\mathcal{V}(2mx-m,2mz-m),\ \  \forall x,z\in[0,1]
\end{aligned}
\label{stopro}
\end{equation}
and $\mathcal{M}_m$ has continuous sample paths almost surely. $\mathcal{V}$ is defined in \eqref{VarFun}. Existence of $\mathcal{M}_m$ for $\forall m$ is proved in appendix \ref{lemmaS}.

\begin{theorem}
Suppose assumptions 1. to 4. hold true. Then,  for any given positive integer $0<m<\infty$,
\begin{equation}
\widetilde{M}_{m}(x) = \sqrt{n}F^{'}(x^{'})\left(x_f^T(X^TX)^{-1}X^T\epsilon-\frac{1}{n}\sum_{i=1}^n\epsilon_i\right) -\frac{1}{\sqrt{n}}\sum_{i=1}^n\left(\mathbf{1}_{\epsilon_i\leq x^{'}}-F(x^{'})\right)\to_{\mathcal{L}}\mathcal{M}_m(x)
\label{AsymptoticM}
\end{equation}
as $n\to\infty$ under Skohord topology in $\mathbf{D}$; here $x^{'} = 2mx-m$. Besides, for any given positive numbers $0<r<s<\infty$,
\begin{equation}
\begin{aligned}
\sup_{x\in [r,s]}\sup_{y\in\mathbf{R}}\vert \mathbf{P}\left(\mathcal{S}(x)\leq y\right)  - \Phi\left(\frac{y}{\sqrt{\mathcal{U}(x)}}\right)\vert\to 0 .
\label{KeyThm}
\end{aligned}
\end{equation}
\label{theoPd}
\end{theorem}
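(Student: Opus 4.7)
My plan is to first establish the functional CLT for $\widetilde{M}_m$ in $\mathbf{D}[0,1]$ (Part 1) via Lindeberg-Feller plus standard empirical-process tightness, and then derive Part 2 as a consequence by linearizing the bootstrap and true-population conditional CDFs and identifying the dominant random part with $\widetilde{M}_m$ evaluated at a rescaled argument.

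For Part 1, rewrite
\[
\widetilde{M}_m(x)=\frac{1}{\sqrt{n}}\sum_{i=1}^n W_{n,i}(x), \quad W_{n,i}(x) = F'(x')\bigl[n\, x_f^T(X^TX)^{-1}x_i - 1\bigr]\epsilon_i - \bigl(\mathbf{1}_{\epsilon_i \leq x'} - F(x')\bigr),
\]
with $x' = 2mx - m$, a triangular array of row-independent mean-zero summands. Finite-dimensional convergence follows via Cram\'er-Wold from Lindeberg-Feller: Lindeberg's condition is checked using the fourth-moment hypothesis on $\epsilon_1$ together with boundedness of $\|x_i\|_2$, and the limiting covariance matches $\mathcal{V}(x',z')$ once one uses $\tfrac{1}{n}\sum_i [n x_f^T(X^TX)^{-1}x_i - 1]^2 \to x_f^TA^{-1}x_f + 1 - 2x_f^TA^{-1}b$, $\tfrac{1}{n}\sum_i[n x_f^T(X^TX)^{-1}x_i - 1] \to x_f^TA^{-1}b - 1$, and $\mathbf{E}[\epsilon_1 \mathbf{1}_{\epsilon_1 \leq x'}] = H(x')$. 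For tightness in $\mathbf{D}[0,1]$, split $\widetilde{M}_m = L_n + R_n$: the smooth piece $L_n(x) = \sqrt{n}F'(x')(x_f^T(X^TX)^{-1}X^T\epsilon - \bar{\epsilon})$ is a single $O_p(1)$ scalar times the continuous deterministic function $F'(2mx-m)$, while the empirical piece $R_n(x) = -n^{-1/2}\sum_i(\mathbf{1}_{\epsilon_i \leq x'} - F(x'))$ is Donsker by the classical theorem for i.i.d.\ real-valued samples (after the linear reparameterization). Since $\mathcal{M}_m$ has continuous paths, Skorohod convergence follows.

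For Part 2, use conditional independence to get explicit forms for the two conditional CDFs: since $\varepsilon$ is independent of $y$ and $\widehat{\beta}$ is $y$-measurable, $\mathbf{P}^*(y_f - x_f^T\widehat{\beta} \leq x) = F(x + \delta)$ with $\delta := x_f^T(X^TX)^{-1}X^T\epsilon$; and conditioning on $\epsilon^*$ (while $\varepsilon^*\sim\widehat{F}$ independently) gives $\mathbf{P}^*(y_f^* - x_f^T\widehat{\beta}^* \leq x) = \mathbf{E}^*\widehat{F}(x + \delta^*)$ with $\delta^* := x_f^T(X^TX)^{-1}X^T\epsilon^*$. Setting $A(x) := \sqrt{n}[F(x+\delta) - \mathbf{E}^*\widehat{F}(x+\delta^*)]$ so that $\mathcal{S}(x) = A(x) - A(-x)$, decompose
\[
A(x) = \sqrt{n}[F(x+\delta) - F(x)] + \sqrt{n}[F(x) - \widehat{F}(x)] + \sqrt{n}[\widehat{F}(x) - \mathbf{E}^*\widehat{F}(x+\delta^*)]
\]
and, uniformly on compacta in $x$, show: (i) the first bracket is $F'(x)\sqrt{n}\delta + o_p(1)$ by Taylor and $\sup|F''|<\infty$; (ii) the second equals $-n^{-1/2}\sum_i(\mathbf{1}_{\epsilon_i \leq x} - F(x)) - F'(x)\sqrt{n}\bar{\epsilon} + o_p(1)$ via a Koul-type Bahadur expansion of the residual empirical process, where the per-sample perturbation $\widehat{\epsilon}_i - \epsilon_i = -\bar\epsilon - (x_i - \bar{x}_n)^T(\widehat\beta - \beta)$ averages to $-\bar\epsilon$ thanks to $\sum_i(x_i - \bar{x}_n) = 0$; and (iii) the third is $o_p(1)$ because $\mathbf{E}^*\delta^* = 0$ (bootstrap centering) and $\mathrm{Var}^*\delta^* = O(1/n)$, so the smoothing of $\widehat{F}$ by $\delta^*$ contributes only second order. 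Summing yields $A(x) = \widetilde{M}_m((x+m)/(2m)) + o_p(1)$ for any $m>s$, whereupon Part 1 together with the continuous mapping theorem produces weak convergence of $\mathcal{S}(\cdot)$ on $[r,s]$ to a centered continuous Gaussian process with marginal variance $\mathcal{V}(x,x) + \mathcal{V}(-x,-x) - 2\mathcal{V}(x,-x) = \mathcal{U}(x) > 0$; the joint uniformity in $y$ and $x\in[r,s]$ asserted in \eqref{KeyThm} then follows by P\'olya's theorem, using continuity and strict positivity of $\mathcal{U}$ on $[r,s]$.

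The hardest step will be estimate (iii), i.e., controlling $\sqrt{n}[\widehat{F}(x) - \mathbf{E}^*\widehat{F}(x+\delta^*)]$ uniformly in $x$: $\widehat{F}$ is a step function with jumps of size $1/n$ and $\delta^*$ is a bootstrap perturbation of size $O_p(n^{-1/2})$, so a naive Taylor expansion of $\widehat{F}$ is not available. The required $o_p(1)$ bound demands simultaneously a second-order Stieltjes-type expansion of the $F$-smoothed quantity against the law of $\delta^*$, a uniform modulus-of-continuity estimate for $\widehat{F} - F$ over increments of order $n^{-1/2}$, and careful use of $\mathbf{E}^*\epsilon^*_i = 0$ to kill the would-be first-order term. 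The Koul-type residual Bahadur representation needed in (ii) is a secondary but more classical hurdle, handled along similar lines via the cancellation $\sum_i(x_i - \bar{x}_n) = 0$.
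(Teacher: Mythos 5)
Your proposal is correct and follows essentially the same architecture as the paper's proof. For Part~1 you establish finite-dimensional convergence by Cram\'er--Wold plus Lindeberg--Feller exactly as the paper does, but you handle tightness by splitting $\widetilde{M}_m$ into a smooth scalar-times-deterministic-function piece and an i.i.d.\ empirical process (invoking classical Donsker plus the $C$-tightness of both summands), whereas the paper instead verifies the fourth-moment criterion of Billingsley's Theorem~13.5 directly on the whole sum via the bound $\mathbf{E}|\widetilde{\mathcal{M}}_m(t)-\widetilde{\mathcal{M}}_m(s)|^2|\widetilde{\mathcal{M}}_m(s)-\widetilde{\mathcal{M}}_m(r)|^2\leq C(t-r)^2$. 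Both routes are sound; yours is perhaps more transparent, at the cost of having to justify that addition is compatible with Skorohod convergence when the limits are continuous. For Part~2 your decomposition of $A(x)=\sqrt{n}[F(x+\delta)-\mathbf{E}^*\widehat{F}(x+\delta^*)]$ into the three brackets is algebraically identical to the paper's expansion in its proof of \eqref{KeyThm}; your estimate (ii) is precisely what the paper obtains from the Koul residual-Bahadur representation combined with $\widehat\lambda=\bar\epsilon-\bar{x}_n^T(\widehat\beta-\beta)$, and your identification of (iii) as the hard step (requiring $\mathbf{E}^*\delta^*=0$ to kill the first-order $F$-term plus a uniform modulus-of-continuity bound for $\widehat\alpha=\sqrt{n}(\widehat F-F)$ at scale $n^{-1/2}$) matches the paper's use of its Lemma~\ref{lemmaBoot} and the display \eqref{continuityAlpha}. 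The final pass from process-level convergence to uniform convergence of distribution functions in $(x,y)$ is a bit more than a single application of P\'olya's theorem --- the paper handles the $x$-uniformity via discretization and stochastic equicontinuity in its Corollary~\ref{coroLink} --- but you flag the need for continuity and strict positivity of $\mathcal U$ and the overall structure is right.
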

The variance of $\mathcal{M}_m\left(\frac{x+m}{2m}\right) - \mathcal{M}_m\left(\frac{-x+m}{2m}\right)$ is $\mathcal{U}(x) = \mathcal{V}(x,x) + \mathcal{V}(-x,-x) - 2\mathcal{V}(x,-x)$  when $ x\in\mathbf{R}, m > \vert x\vert + 1$ . Hence, theorem \ref{theoPd} implies $\mathcal{S}(x)$ 
has  asymptotically   the same distribution as $\widetilde{M}_m\left(\frac{x + m}{2m}\right) - \widetilde{M}_m^{-}\left(\frac{-x+m}{2m}\right)$,
with arbitrarily chosen $m > |x| + 1$, since $m$ does not influence $\mathcal{U}$.

In the conditional coverage context, an application of theorem \ref{theoPd} is to calculate a prediction interval's guarantee level. For example, by choosing    $y = 0$,  and $x = c^*_{1-\alpha}$ which denotes the $1-\alpha$ quantile of the distribution $\mathbf{P}^*(\vert y_f^* - x_f^T\widehat{\beta}^*\vert\leq x)$, we have the following corollary
\begin{corollary}
Under assumptions 1. to 4., the prediction interval generated by residual-based bootstrap has an asymptotically $50\%$ guarantee level.
\label{CORO1}
\end{corollary}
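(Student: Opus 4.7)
The plan is to rewrite the guarantee-level event in terms of the error process $\mathcal{S}$ from \eqref{Sxxxx}, so that theorem \ref{theoPd} delivers the $1/2$ immediately from the symmetry of the limiting Gaussian. By definition $c^*_{1-\alpha}$ is the $(1-\alpha)$-quantile of the bootstrap distribution of $|y_f^*-x_f^T\widehat{\beta}^*|$ under $\mathbf{P}^*$, so, taking $B\to\infty$ and absorbing the discrete atoms of $\widehat{F}$ into a vanishing term under assumption 1, we have $\mathbf{P}^*(|y_f^*-x_f^T\widehat{\beta}^*|\leq c^*_{1-\alpha}) = 1-\alpha + o_p(1)$. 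Subtracting this from $\mathbf{P}^*(y_f\in\Gamma) = \mathbf{P}^*(|y_f-x_f^T\widehat{\beta}|\leq c^*_{1-\alpha})$ yields $\{\mathbf{P}^*(y_f\in\Gamma)\geq 1-\alpha\} = \{\mathcal{S}(c^*_{1-\alpha})\geq 0\}$ up to an $o_p(1)$ event, so the guarantee level equals $\mathbf{P}(\mathcal{S}(c^*_{1-\alpha})\geq 0)+o(1)$.

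Next I would pin down the random quantile $c^*_{1-\alpha}$. Under $\mathbf{P}^*$ the bootstrap prediction root equals $\varepsilon^*-x_f^T(X^TX)^{-1}X^T\epsilon^*$; the correction term is $O_{\mathbf{P}^*}(n^{-1/2})$ by assumption 3 and $\widehat{\sigma}^2\to\sigma^2$, while $\varepsilon^*$ has cdf $\widehat{F}$, which tends to $F$ uniformly by a Glivenko--Cantelli argument applied to the centered residuals (consistency of $\widehat{\beta}$ follows from assumptions 1 and 3). Hence the bootstrap cdf of $|y_f^*-x_f^T\widehat{\beta}^*|$ converges uniformly on compact sets to $F(x)-F(-x)$ in probability; strict monotonicity of this limit at its $(1-\alpha)$-quantile $c_0$, guaranteed by $F^{'}>0$ in assumption 4, yields $c^*_{1-\alpha}\to_p c_0\in(0,\infty)$. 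Together with $\mathcal{U}(c_0)>0$ from assumption 4, this places $c^*_{1-\alpha}$ with probability tending to one inside a compact interval $[r,s]$ on which theorem \ref{theoPd} applies.

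Finally, invoking \eqref{KeyThm} on $[r,s]$ together with $c^*_{1-\alpha}\to_p c_0$, I would conclude that $\mathbf{P}(\mathcal{S}(c^*_{1-\alpha})\leq y)\to\Phi(y/\sqrt{\mathcal{U}(c_0)})$ for every $y$, and specializing to $y=0$ gives $\mathbf{P}(\mathcal{S}(c^*_{1-\alpha})\geq 0)\to 1/2$, which is the claim. The main obstacle is precisely this last step: the Kolmogorov bound \eqref{KeyThm} is stated at deterministic $x$, so to evaluate it at the random point $c^*_{1-\alpha}$ one must combine the uniformity in $x\in[r,s]$ built into \eqref{KeyThm} with asymptotic equicontinuity of $\mathcal{S}(\cdot)$ on $[r,s]$---which I would extract from the $\mathbf{D}$-convergence in \eqref{AsymptoticM} together with the almost-sure continuity of the Gaussian limit $\mathcal{M}_m$---so that $\mathcal{S}(c^*_{1-\alpha}) = \mathcal{S}(c_0) + o_p(1)$ and the asymptotic normality survives the substitution. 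Everything else reduces to routine computations once these pieces are in place.
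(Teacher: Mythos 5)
Your proposal is correct and essentially fleshes out the argument the paper only sketches before the corollary statement (plugging $y=0$, $x=c^*_{1-\alpha}$ into \eqref{KeyThm}, with your consistency-plus-equicontinuity treatment of the random evaluation point being exactly the rigorization that remark implicitly requires). The one detail to tighten is that you need the overshoot $G^*(c^*_{1-\alpha})-(1-\alpha)$ to be $o_p(1/\sqrt{n})$ rather than $o_p(1)$, since the guarantee event is $\{\mathcal{S}(c^*_{1-\alpha})\geq\sqrt{n}(1-\alpha-G^*(c^*_{1-\alpha}))\}$ and the right-hand side gets scaled by $\sqrt{n}$; this finer control follows from the quantile-jump bound of Lemma \ref{lemmaContinuouity} (the regression-noise convolution smears out the $1/n$-sized atoms of $\widehat{F}$), not from Glivenko--Cantelli and continuity of $F$ alone.
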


Alternatively, we could  choose $y = d_\gamma$, the $\gamma$ quantile of the standard normal distribution, and  $x=
 c^*_{1-\alpha-d_\gamma {\sqrt{\mathcal{U}(c^*_{1-\alpha})} }/{\sqrt{n}}}$.

Since $\mathcal{U}$ is continuous, theorem \ref{theoPd} implies the event
$
\mathbf{P}^*(\vert y_f-x_f^T\widehat{\beta}\vert\leq
c^*_{1-\alpha-d_\gamma {\sqrt{\mathcal{U}(c^*_{1-\alpha})} }/{\sqrt{n}}} ) - (1 - \alpha) \geq 0$, which is equivalent to the event
\begin{equation}
\begin{aligned}
\sqrt{n}\left(\mathbf{P}^*(\vert y_f-x_f^T\widehat{\beta}\vert\leq c^*_{1-\alpha-d_\gamma {\sqrt{\mathcal{U}(c^*_{1-\alpha})} }/{\sqrt{n}}}) - (1-\alpha- \frac{d_\gamma\sqrt{\mathcal{U}(c^*_{1-\alpha})} }{\sqrt{n}})\right)\geq d_\gamma \sqrt{\mathcal{U}(c^*_{1-\alpha})}
\end{aligned}
\label{findPI}
\end{equation}
asymptotically has unconditional probability $1-\gamma$. In other words, the prediction interval $\{y_f\Big|\vert y_f-x_f^T\widehat{\beta}\vert\leq c^*_{1-\alpha-d_\gamma {\sqrt{\mathcal{U}(c^*_{1-\alpha})} }/{\sqrt{n}}}\}$ has asymptotic guarantee level $1-\gamma$. Section \ref{chp3} adopts this idea. However, since estimating $\mathcal{U}$ is difficult,
we will focus on finding  a prediction interval with unconditional guarantee through resampling.

\section{Bootstrap prediction interval with unconditional guarantee}
\label{chp3}
Residual-based bootstrap and \textit{MF/MB bootstrap} generate asymptotically valid
prediction intervals. However, the statistician  cannot adjust those prediction intervals' guarantee level. This section proposes
two new variations on these bootstrap methods, namely the  \textit{Residual bootstrap with unconditional guarantee (RBUG)}
and the \textit{Predictive residual bootstrap with unconditional guarantee (PRBUG)}, that
 maintain  the asymptotic validity but also  allows us to choose the prediction interval's guarantee level.

\begin{algorithm}[\textit{RBUG}/\textit{PRBUG}]

\textbf{Input:} Design matrix $X$ and dependent variable data vector $y$
satisfying $y = X\beta + \epsilon$, the new regression vector $x_f$
of interest, and number of bootstrap replicates $B$, number of replicates to find quantile's adjustment $\mathcal{B}_1$, number of Monte Carlo integration steps $\mathcal{B}_2$, nominal coverage probability $1-\alpha$, and nominal guarantee level $1-\gamma$

\textbf{Note:} For \textit{RBUG}, we define $\widehat{\tau} = (\widehat{\tau}_1,...,\widehat{\tau}_n)^T= \widehat{\epsilon}$ as  in \eqref{DefResi},  while for \textit{PRBUG}, we define $\widehat{\tau} = \widehat{r}$ as  in \eqref{PRD}.

\textbf{Calculate an unadjusted sample quantile}

1) Calculate the statistics $\widehat{\beta} = (X^TX)^{-1}X^Ty$ and $\widehat{\tau}$.

2) Generate i.i.d. residuals $\epsilon^* = (\epsilon_{1}^*,...,\epsilon^*_{n})^T$ and $\varepsilon^*$ by drawing from $\widehat{\tau}_1,...,\widehat{\tau}_n$ with replacement; calculate $y^* = X\widehat{\beta} + \epsilon^*$, $y^*_f = x_f^T\widehat{\beta} + \varepsilon^*$, and $\widehat{\beta}^* = (X^TX)^{-1}X^Ty^*$; derive the prediction root $\delta^*_b = y^*_{f} - x_f^T\widehat{\beta}^*$.

3) Repeat 2) for $b = 1,2,...,B$, and calculate the $1-\alpha$ unadjusted sample quantile
(denoted as  $\widehat{c}^*_{1-\alpha}$) of $\vert\delta^*_b\vert$, $b=1,2,...,B$.

\textbf{Simulate eq. \eqref{AsymptoticM} to find the quantile adjustment}

4) Generate i.i.d. $e^* = (e^*_{1},...,e^*_{n})^T$ by drawing from $\widehat{\tau}_1,...,\widehat{\tau}_n$ with replacement, then derive $y^\dagger = X\widehat{\beta} + e^*$, $\widehat{\beta}^\dagger = (X^TX)^{-1}X^Ty^\dagger$

5) Generate i.i.d. $\varepsilon^*_1,...,\varepsilon^*_{\mathcal{B}_2}$ by drawing from $\widehat{\tau}_1,...,\widehat{\tau}_n$ with replacement. For $b_2 = 1,2,...,\mathcal{B}_2$, derive $\zeta^*_{b_2} = x_f^T\widehat{\beta} + \varepsilon^*_{b_2} - x_f^T\widehat{\beta}^\dagger + \frac{1}{n}\sum_{i=1}^ne^*_{i}$.
Define
\begin{equation}
p^*_{b_1} = \sqrt{n}\left(\frac{1}{n}\sum_{i=1}^n\mathbf{1}_{\vert e^*_{i}\vert\leq \widehat{c}^*_{1-\alpha}}- \frac{1}{\mathcal{B}_2}\sum_{b_2=1}^{\mathcal{B}_2}\mathbf{1}_{\vert\zeta^*_{b_2}\vert\leq \widehat{c}^*_{1-\alpha}} \right)
\label{Eq29}
\end{equation}

6) Repeat step 4) to 5) for $b_1=1,2,...,\mathcal{B}_1$, then calculate the $1-\gamma$ sample quantile (denoted as $\widehat{d}^*_{1-\gamma}$)  of $p^*_{b_1}, b_1=1,2,...,\mathcal{B}_1$.

\textbf{Construct the prediction interval}

7) Calculate $\widehat{c}^*_{1-\alpha + \widehat{d}^*_{1-\gamma}/\sqrt{n}}$, the $1-\alpha + \widehat{d}^*_{1-\gamma}/\sqrt{n}$ sample quantile of $\vert\delta^*_b\vert,\ b=1,2,...,B$

8) The prediction interval with $1-\alpha$ coverage probability and $1-\gamma$ guarantee level is given by the set
\begin{equation}
\left\{y_f\Big|\vert y_f - x_f^T\widehat{\beta}\vert\leq \widehat{c}^*_{1-\alpha + \widehat{d}^*_{1-\gamma}/\sqrt{n}}\right\} .
\label{realPred}
\end{equation}
\label{alg2}
\end{algorithm}

\begin{remark}
In \textit{RBUG}, if we take a conditional expectation on $p^*_{b_1}$ (conditioning on everything except $\varepsilon_{b_2}^*, b_2 = 1,2,...,\mathcal{B}_2$), it becomes
\begin{equation}
\begin{aligned}
\frac{1}{\sqrt{n}}\sum_{i = 1}^n\left(\mathbf{1}_{e^*_i\leq\widehat{c}^*_{1-\alpha}}-\widehat{F}(\widehat{c}^*_{1-\alpha})\right) - \sqrt{n}\left(\widehat{F}\left(\widehat{c}^*_{1-\alpha} + x_f^T(X^TX)^{-1}X^Te^* - \frac{1}{n}\sum_{i=1}^ne^*_i\right) - \widehat{F}(\widehat{c}^*_{1-\alpha})\right)\\
-\frac{1}{\sqrt{n}}\sum_{i = 1}^n\left(\mathbf{1}_{e^*_i<-\widehat{c}^*_{1-\alpha}} - \widehat{F}^-(-\widehat{c}^*_{1-\alpha})\right) + \sqrt{n}\left(\widehat{F}^{-}\left(-\widehat{c}^*_{1-\alpha} + x_f^T(X^TX)^{-1}X^Te^* - \frac{1}{n}\sum_{i=1}^ne^*_i\right) - \widehat{F}^-(-\widehat{c}^*_{1-\alpha})\right)
\label{JJO}
\end{aligned}
\end{equation}
which simulates $-\widetilde{M}_m\left(\frac{\widehat{c}^*_{1-\alpha} + m}{2m}\right) + \widetilde{M}_m^{-}\left(\frac{-\widehat{c}^*_{1-\alpha}+m}{2m}\right)$(for arbitrary chosen integer $m > \vert \widehat{c}^*_{1-\alpha}\vert + 1$)  in the bootstrap world. From the strong law of large numbers, $p^*_{b_1}, b_1 = 1,2,...,\mathcal{B}_1$ can approximate its conditional expectation \eqref{JJO} by letting $\mathcal{B}_2\to\infty$.
The same discussion applies to \textit{PRBUG} as well.
\end{remark}

We focus on proving \textit{RBUG}'s validity, i.e., that prediction interval
(\ref{realPred}) satisfies definition \ref{def1}. We define
\begin{equation}
\begin{aligned}
\widehat{\mathcal{M}}(x) = \sqrt{n}\left(\widehat{F}\left(x+x_f^T(X^TX)^{-1}X^Te^*-\frac{1}{n}\sum_{i=1}^ne^*_i\right) - \frac{1}{n}\sum_{i=1}^n\mathbf{1}_{e^*_i\leq x}\right),\ \
\widehat{\mathcal{S}}(x) = \widehat{\mathcal{M}}(x) -  \widehat{\mathcal{M}}^-(-x)
\end{aligned}
\label{JJYYS}
\end{equation}
and the quantiles
\begin{equation}
\begin{aligned}
c^*_{1-\alpha} = \inf\left\{x\in\mathbf{R}\Big| G^*(x)\geq 1-\alpha\right\},\
d^*_{1-\gamma}(x) = \inf\left\{z\in\mathbf{R}\Big| \mathbf{P}^*\left(-\widehat{S}(x)\leq z\right)\geq 1-\gamma\right\}
\end{aligned}
\label{defD}
\end{equation}
Here $G^*(x) = \mathbf{P}^*\left(\vert y_f^*-x_f^T\widehat{\beta}^*\vert\leq x\right),x\in\mathbf{R}$; $y^*_f, \widehat{\beta}^*$, and $e^*_i, i = 1,2,...,n$ are defined in algorithm \ref{alg2}. For simplicity, we denote $c^*(1-\alpha, 1-\gamma) = c^*_{1-\alpha + d^*_{1-\gamma}(c^*_{1-\alpha})/\sqrt{n}}$. From theorem 1.2.1 of Politis et al. \cite{subsampling}, $\widehat{c}^*_{1-\alpha + \widehat{d}^*_{1-\gamma}/\sqrt{n}}$ converges to $c^*(1-\alpha, 1-\gamma)$ almost surely as $B,\mathcal{B}_1,\mathcal{B}_2\to\infty$. Therefore, the theoretical justification only focuses on $c^*(1-\alpha, 1-\gamma)$.

\begin{theorem}
Consider the \textit{RBUG} algorithm, i.e, algorithm 2 with
 $\widehat{\tau} =  \widehat{\epsilon}$ as  in \eqref{DefResi}.
Suppose assumption 1. to 4. hold true. Then,  for any given $0<\alpha,\gamma<1, \delta>0$,
\begin{equation}
\begin{aligned}
\mathbf{P}\left(\vert\mathbf{P}^*\left(\vert y_f - x^T_f\widehat{\beta}\vert\leq c^*(1-\alpha,1-\gamma)\right) - (1-\alpha)\vert\leq \delta\right)\to 1\\
\mathbf{P}\left(\{\mathbf{P}^*\left(\vert y_f - x^T_f\widehat{\beta}\vert\leq c^*(1-\alpha,1-\gamma)\right)\geq 1-\alpha\}\right)\to 1-\gamma
\end{aligned}
\end{equation}
\label{THMAS}
\end{theorem}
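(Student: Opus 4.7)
The plan is to decompose the centered conditional coverage probability into three manageable pieces, analyze each asymptotically via Theorem \ref{theoPd} and its bootstrap analogue, and then read off both conclusions from the resulting representation.

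First, I would write
\begin{equation*}
\mathbf{P}^*\left(\vert y_f - x_f^T\widehat{\beta}\vert\leq c^*(1-\alpha,1-\gamma)\right) - (1-\alpha) = A_1 + A_2 + A_3,
\end{equation*}
where $A_1 = \mathbf{P}^*(\vert y_f - x_f^T\widehat{\beta}\vert\leq c^*(1-\alpha,1-\gamma)) - G^*(c^*(1-\alpha,1-\gamma))$ is exactly $-\mathcal{S}(c^*(1-\alpha,1-\gamma))/\sqrt{n}$ in the notation of \eqref{Sxxxx}; $A_2 = G^*(c^*(1-\alpha,1-\gamma)) - (1-\alpha + d^*_{1-\gamma}(c^*_{1-\alpha})/\sqrt{n})$ is a pure bootstrap-quantile remainder, which is $o(1/\sqrt{n})$ in $\mathbf{P}^*$-probability because $c^*_{1-\alpha + d^*_{1-\gamma}(c^*_{1-\alpha})/\sqrt{n}}$ is, by definition, the corresponding quantile of $G^*$ whose limiting CDF is continuous by assumption~4; and $A_3 = d^*_{1-\gamma}(c^*_{1-\alpha})/\sqrt{n}$ is the deliberate adjustment built into the algorithm.

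Second, I would prove a bootstrap counterpart of Theorem \ref{theoPd}: under $\mathbf{P}^*$, the process $-\widehat{\mathcal{S}}(x)$ defined in \eqref{JJYYS} converges, conditionally on the data and uniformly in $x$ on compact subsets, to the same centred Gaussian with variance $\mathcal{U}(x)$ that governs $\mathcal{S}(x)$. The argument runs parallel to Theorem \ref{theoPd}, with the key inputs being that $\widehat{F}$, its moments, and its derivatives converge to $F$, its moments, and its derivatives, which is ensured by assumptions 1 and 3 together with the Glivenko-Cantelli theorem. The expansion indicated in \eqref{JJO} shows that $-\widehat{\mathcal{S}}(x)$ is, up to $o_{\mathbf{P}^*}(1)$, a linear combination of an empirical process term and a smooth term in $x_f^T(X^TX)^{-1}X^Te^*$, which jointly converge to the same Gaussian object. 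A uniform statement then permits replacing $x$ by the random point $c^*_{1-\alpha}$, which by continuity of $G^*$'s limit converges to a deterministic $c_0$ with $\mathcal{U}(c_0)>0$. Consequently $d^*_{1-\gamma}(c^*_{1-\alpha}) \to d_{1-\gamma}\sqrt{\mathcal{U}(c_0)}$ in probability, and $c^*(1-\alpha,1-\gamma) \to c_0$ because the level adjustment is $O(1/\sqrt{n})$.

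Third, combining these facts yields
\begin{equation*}
\sqrt{n}\left(A_1 + A_2 + A_3\right) = -\mathcal{S}(c^*(1-\alpha,1-\gamma)) + d^*_{1-\gamma}(c^*_{1-\alpha}) + o_{\mathbf{P}}(1).
\end{equation*}
For the first statement, the right-hand side is bounded in probability, so $A_1 + A_2 + A_3 = o_\mathbf{P}(1)$, which gives validity for every $\delta > 0$. For the second statement, the event $\{\mathbf{P}^*(\cdot)\geq 1-\alpha\}$ coincides, up to $o_\mathbf{P}(1)$, with $\{d^*_{1-\gamma}(c^*_{1-\alpha}) \geq \mathcal{S}(c^*(1-\alpha,1-\gamma))\}$. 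Since $\mathcal{S}(c^*(1-\alpha,1-\gamma))$ converges in law to $N(0,\mathcal{U}(c_0))$ by Theorem \ref{theoPd} and $d^*_{1-\gamma}(c^*_{1-\alpha})$ converges in probability to the $1-\gamma$ quantile of $N(0,\mathcal{U}(c_0))$, Slutsky's theorem and the continuity of the Gaussian CDF give $\Phi(d_{1-\gamma}) = 1-\gamma$.

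The main obstacle will be the bootstrap Gaussian-approximation step: establishing the conditional limit of $\widehat{\mathcal{S}}(x)$ uniformly on a compact interval containing $c_0$, \emph{almost surely} in $X$ and $x_f$, and justifying the substitution of the random $c^*_{1-\alpha}$ into that limit. This requires controlling the modulus of continuity of $\widehat{\mathcal{M}}$, verifying that the estimated variance structure built into $\widehat{\mathcal{S}}$ converges to $\mathcal{V}$ at all relevant pairs $(x,z)$, and handling the two-sided nature of the absolute value (which is why assumption~4 insists on $\mathcal{U}(x) > 0$ for $x>0$). Controlling the remainder $A_2$ beyond the trivial $O(1/\sqrt{n})$ bound, to ensure it is in fact $o(1/\sqrt{n})$ in probability, is another delicate point that will rely on the strict positivity of $F'$ near $\pm c_0$.
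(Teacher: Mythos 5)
Your overall decomposition coincides with the paper's own (compare eq.~\eqref{Gst1}): you peel off the root-$n$ scaled residual bootstrap error $\mathcal{S}$, a bootstrap quantile remainder, and the deliberate quantile adjustment $d^*_{1-\gamma}(c^*_{1-\alpha})/\sqrt{n}$, and the final probability computation ($\Phi(d_{1-\gamma}) = 1-\gamma$) is the same. Two issues worth flagging.

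\textbf{A sign slip that happens not to matter.} Since $\mathcal{S}(x) = \sqrt{n}\bigl(\mathbf{P}^*(\vert y_f - x_f^T\widehat{\beta}\vert \leq x) - G^*(x)\bigr)$, your $A_1$ equals $+\mathcal{S}(c^*(1-\alpha,1-\gamma))/\sqrt{n}$, not $-\mathcal{S}(\cdot)/\sqrt{n}$. The correct under-coverage event is therefore $\{-\mathcal{S}(c^*(1-\alpha,1-\gamma)) \leq d^*_{1-\gamma}(c^*_{1-\alpha})\}$ rather than $\{\mathcal{S}(\cdot) \leq d^*_{1-\gamma}(\cdot)\}$. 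Because the limiting law of $\mathcal{S}$ is a centred Gaussian $N(0,\mathcal{U}(c_{1-\alpha}))$, the sign error cancels out in the final probability calculation, but you should still carry the correct signs, and the sign also explains why the paper defines $d^*_{1-\gamma}(x)$ as a quantile of $-\widehat{\mathcal{S}}(x)$ rather than of $\widehat{\mathcal{S}}(x)$.

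\textbf{The bootstrap Gaussian approximation cannot be run ``in parallel'' via derivatives of $\widehat{F}$.} You say the key input for the conditional analogue of Theorem~\ref{theoPd} is that ``$\widehat{F}$, its moments, and its derivatives converge to $F$, its moments, and its derivatives.'' But $\widehat{F}$ is a step function and has no derivative, so this cannot be the mechanism. What the paper actually does (Lemma~\ref{Wassiter} and Lemma~\ref{thmBOOT}) is set up a Wasserstein coupling: bootstrap residuals $e^*$ (drawn from $\widehat{F}$) are coupled to draws $e^\star$ from the centred empirical distribution $T$, which are in turn coupled to draws $e^\dagger$ from $F$ with $\mathbf{E}^*(e^\star_1-e^\dagger_1)^2\to 0$. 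The process built from $e^\dagger$ is literally of the form $\widetilde{M}_m$ of Theorem~\ref{theoPd}; the coupling then transfers the Gaussian approximation back through $e^\star$ to $e^*$. Along the way, all Taylor expansions use the smooth $F$, not $\widehat{F}$, and the oscillation terms are controlled via the bootstrap modulus-of-continuity bound \eqref{Quan_Boot}. Similarly, your $A_2$ claim ($o(1/\sqrt{n})$ from ``continuity of the limiting CDF'') needs the dedicated Lemma~\ref{lemmaContinuouity}, which establishes that the jumps of $G^*$ on the relevant compact interval are uniformly $o(1/\sqrt{n})$; the pointwise limit being continuous does not by itself control the discrete jumps of $G^*$ at the $\sqrt{n}$ scale. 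You flag both of these as ``delicate points'', which is honest, but the specific route you sketch for the bootstrap CLT would not close; the Wasserstein coupling (or an equivalently careful argument) is genuinely needed.
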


Corollary \ref{coroF} proves the validity of \textit{PRBUG}. We
 define $\mathcal{G}^*(x) = \mathbf{P}^*\left(\vert y_f^*-x_f^T\widehat{\beta}^*\vert\leq x\right),x\in\mathbf{R}$, $C^*_{1-\alpha} = \inf\left\{x\in\mathbf{R}\Big| \mathcal{G}^*(x)\geq 1-\alpha\right\}$, and $D^*_{1-\gamma}(x) = \inf\left\{z\in\mathbf{R}\Big| \mathbf{P}^*\left(-\widehat{S}(x)\leq z\right)\geq 1-\gamma\right\}$. We define $C^*(1-\alpha,1-\gamma) = C^*_{1-\alpha+D^*_{1-\gamma}(C^*_{1-\alpha})/\sqrt{n}}$.

\begin{corollary}
Consider the \textit{PRBUG} algorithm, i.e, algorithm 2 with $\widehat{\tau} = \widehat{r}$.
Suppose assumptions 1. to 4. hold true. Then,  for any given $0<\alpha,\gamma<1, \delta > 0$,
\begin{equation}
\begin{aligned}
\mathbf{P}\left(\vert\mathbf{P}^*\left(\vert y_f - x_f^T\widehat{\beta}\vert\leq C^*(1-\alpha,1-\gamma)\right) - (1-\alpha)\vert\leq \delta\right)\to 1\\
\mathbf{P}\left(\{\mathbf{P}^*\left(\vert y_f - x^T_f\widehat{\beta}\vert\leq C^*(1-\alpha,1-\gamma)\right)\geq 1-\alpha\}\right)\to 1-\gamma .
\end{aligned}
\label{PrdTheo}
\end{equation}
\label{coroF}
\end{corollary}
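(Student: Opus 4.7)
The strategy is to show that the only thing that changes when we move from \textit{RBUG} to \textit{PRBUG} is the resampling distribution, and that this distribution differs from the one used in Theorem \ref{THMAS} by an amount that is asymptotically negligible. Therefore Theorem \ref{THMAS} carries over to the \textit{PRBUG} setting essentially verbatim. The first step is to exploit the standard leave-one-out identity
\begin{equation}
\widehat{r}_i^{'} = \frac{\widehat{\epsilon}_i^{'}}{1 - h_{ii}}, \qquad h_{ii} = x_i^T (X^TX)^{-1} x_i .
\end{equation}
Under assumption 3, $\Vert x_i\Vert_2\leq M$ and $X^TX/n\to A$ with $A$ invertible, so $\max_{1\leq i\leq n} h_{ii} = O(1/n)$. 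Combined with the $O_p(1)$ bounds on $\max_i |\widehat{\epsilon}_i|$ that follow from assumption 1 ($\mathbf{E}|\epsilon_1|^4<\infty$), this yields $\max_i |\widehat{r}_i - \widehat{\epsilon}_i| = o_p(1)$ and $|\widehat{\sigma}_r^2 - \widehat{\sigma}^2| = o_p(1)$, where $\widehat{\sigma}_r^2 = n^{-1}\sum_i \widehat{r}_i^2$.

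The second step is to translate this closeness of residuals into closeness of the relevant resampling distributions. Since $F$ is continuously differentiable with bounded $F^{''}$, the empirical cdf $\widehat{F}_r(x) = n^{-1}\sum_i \mathbf{1}_{\widehat{r}_i \leq x}$ differs from $\widehat{F}(x)$ by $o(1)$ uniformly in $x$. Consequently, the bootstrap empirical processes $\widehat{\mathcal{M}}$ and $\widehat{\mathcal{S}}$ of \eqref{JJYYS}, when built from $\widehat{r}$-resampling, share the same weak limit $\mathcal{M}_m$ of \eqref{stopro} as the $\widehat{\epsilon}$-resampled versions; and the bootstrap cdf $\mathcal{G}^*$ is uniformly close to $G^*$. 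It follows that $|C^*_{1-\alpha} - c^*_{1-\alpha}| = o_p(1)$, that $D^*_{1-\gamma}(\cdot)$ converges to the same limiting quantile function as $d^*_{1-\gamma}(\cdot)$, and hence that $C^*(1-\alpha,1-\gamma)$ and $c^*(1-\alpha,1-\gamma)$ are asymptotically equivalent.

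The third step plugs this equivalence into the two claims in \eqref{PrdTheo}. Using the continuity of $F$ and $\mathcal{U}$ together with Theorem \ref{theoPd}, one repeats the argument used for Theorem \ref{THMAS}: the first claim follows from $C^*(1-\alpha,1-\gamma) \to c_{1-\alpha}^{\mathrm{lim}}$ in probability and the stochastic equicontinuity of $\mathbf{P}^*(|y_f-x_f^T\widehat{\beta}|\leq \cdot)$ near $c_{1-\alpha}^{\mathrm{lim}}$; the second claim follows from the Gaussian approximation \eqref{KeyThm} together with the coverage-deficit construction of \eqref{findPI}, in which the variance $\mathcal{U}$ is now estimated implicitly by the $\widehat{r}$-bootstrap analogue of \eqref{JJO}.

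The main technical obstacle is ensuring that the replacement of $\widehat{\epsilon}$ by $\widehat{r}$ preserves the Gaussian approximation of Theorem \ref{theoPd} \emph{uniformly} in $x$ over a neighborhood of $c^*_{1-\alpha}$, since the quantile argument threaded through the proof of Theorem \ref{THMAS} relies on a uniform version of \eqref{KeyThm} for the bootstrap process built from $\widehat{r}$. This boils down to re-examining each step of the proof of Theorem \ref{theoPd} while tracking the $o_p(1)$ perturbation introduced in the first paragraph; because that perturbation is uniform over $i$ and $F^{''}$ is bounded, the stochastic expansions underlying \eqref{AsymptoticM} change only by $o_p(1)$ terms, so the same weak limit and hence the same uniform Gaussian approximation is obtained. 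Once this uniform approximation is in place, the remainder of the proof reproduces the argument of Theorem \ref{THMAS} line by line.
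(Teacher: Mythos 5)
Your strategic outline matches the paper's: bound the discrepancy between $\widehat{r}$ and $\widehat{\epsilon}$ via the leverage identity $\widehat{r}_i' = \widehat{\epsilon}_i'/(1-h_{ii})$, transfer the Gaussian approximation from the $\widehat{\epsilon}$-bootstrap to the $\widehat{r}$-bootstrap, and then replay the proof of Theorem~\ref{THMAS}. However, a key quantitative step in your second paragraph is logically broken. You claim that $\sup_x|\widehat{F}_r(x)-\widehat{F}(x)| = o(1)$ and then write ``Consequently, the bootstrap empirical processes $\widehat{\mathcal{M}}$ and $\widehat{\mathcal{S}}$\ldots share the same weak limit.'' This does not follow: all the relevant processes ($\widehat{\mathcal{M}}$, $\widehat{\mathcal{S}}$, the quantities inside $p^*_{b_1}$, and the bootstrap cdf discrepancy $G^*-\mathcal{G}^*$) are scaled by $\sqrt{n}$, so mere $o(1)$ closeness of the underlying empirical cdf's could, in principle, produce a non-vanishing or even diverging difference after scaling. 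What is actually required, and what the paper proves, is a rate: $\sqrt{n}\sup_{x\in[r,s]}|\widehat{\mathcal{F}}(x)-\widehat{F}(x)|=o_p(1)$, derived from the stronger bound $\mathbf{E}\sum_{i=1}^n(\widehat{r}_i-\widehat{\epsilon}_i)^2 = O(1/n)$ (which you implicitly have but never state; your $\max_i|\widehat{r}_i-\widehat{\epsilon}_i|=o_p(1)$ throws away the rate), applied at the carefully chosen scale $\delta=C/n^{3/4}$ in eq.~\eqref{Deltas}, combined with the modulus-of-continuity bound for $\widehat{\alpha}$ from Lemma~\ref{lemmaBoot}.

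A second omission is the coupling. To compare the $\widehat{\epsilon}$-bootstrap process $\widehat{\mathcal{S}}$ with its predictive-residual analogue $\widehat{\mathcal{T}}$, the paper constructs a joint draw $(e_i^*,u_i^*)$ putting mass $1/n$ on the pairs $(\widehat{\epsilon}_j,\widehat{r}_j)$, so that $\mathbf{P}^*(|e_1^*-u_1^*|>\delta/\sqrt{n})\leq \delta^{-2}\sum_i(\widehat{\epsilon}_i-\widehat{r}_i)^2$. Your proposal works only with marginal closeness of $\widehat{F}$ and $\widehat{F}_r$, which is insufficient to conclude $\mathbf{P}^*(\sup_x|\widehat{\mathcal{S}}(x)-\widehat{\mathcal{T}}(x)|>\xi)\to_p 0$, the ingredient that makes the uniform Gaussian approximation and hence the quantile bounds for $D^*_{1-\gamma}$ and $C^*_{1-\alpha}$ carry over. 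Your final paragraph correctly identifies this transfer as ``the main technical obstacle,'' but the resolution offered (``the perturbation is uniform over $i$ and $F''$ is bounded'') is a heuristic and does not supply the needed $o_p(n^{-1/2})$ control. To close the gap you need to (i) retain the $O_p(1/n)$ squared-discrepancy rate, (ii) construct the explicit pairing, and (iii) invoke the empirical-process continuity modulus at scale $n^{-3/4}$; then the sandwiching $C^*_{1-\alpha+\underline{d}/\sqrt{n}}\leq C^*(1-\alpha,1-\gamma)\leq C^*_{1-\alpha+\overline{d}/\sqrt{n}}$ and reuse of eqs.~\eqref{Gst1}--\eqref{CsHalf} complete the proof as in the paper.
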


\section{Numerical justification}
\label{Numerics}
This section applies numerical simulations to demonstrate the finite sample performance of \textit{RBUG/PRBUG}. The alternatives are residual-based bootstrap and \textit{MF/MB} bootstrap. Figure \ref{Toy} plots point-wise prediction intervals for the linear model $y_i = 0.8 + 0.5x_i + \epsilon_i, i = 1,2,...,100$. I.i.d. residuals are generated by normal distribution with mean $0$ and variance $1$.
\begin{figure}[htbp]
  \centering
  \includegraphics[width = 3in]{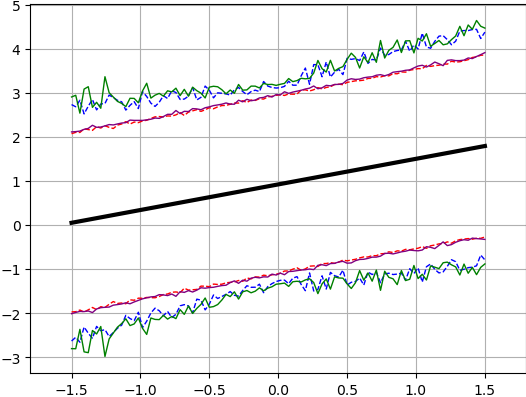}\\
  \caption{Point-wise prediction intervals for the linear model $y_i = 0.8 + 0.5x_i + \epsilon_i, i = 1,2,...,100$. Black line, red dashed lines, solid purple lines, blue dashed lines, solid green lines respectively plot predictors, and point-wise prediction intervals generated by residual-based bootstrap, \textit{MF/MB bootstrap}, \textit{RBUG} and \textit{PRBUG}. The nominal coverage probability is $95\%$, and the nominal guarantee level is $90\%$.}
  \label{Toy}
\end{figure}

Our  linear model of choice is denoted as the   \textbf{Experiment model}
and defined as follows: $y = X\beta + \epsilon$, and $\beta$'s dimension is $15$. $\beta = (\beta_0, \beta_1,...,\beta_{14})^T$, $\beta_0 = 1$, $\beta_1 = 0.5$, $\beta_2 = -1.0$, $\beta_3 = -0.5$, and $\beta_i = 0$ for $i > 3$. The design matrix $X$ is generated by i.i.d.
standard normal random variables, and is fixed in each experiment. The new regressor $x_f = (x_{f,0},...,x_{f,14})^T$ is given by $x_{f,i} = 0.1 \times i, i = 0,1,...,14$. The i.i.d. error vector  $\epsilon$ is generated by various distributions.
We choose the sample size $n = 50, 100, 400, 800, 1600$. The result is demonstrated in table \ref{tableMain} and figure \ref{Hist}. When the sample size is very small, the \textit{MF/MB bootstrap} alleviates the residual-based bootstrap's under-coverage nature. Yet this modification does not change the asymptotic guarantee level. On the other hand, the \textit{RBUG} and the \textit{PRBUG} algorithms improve the residual-based bootstrap's performance by controlling the asymptotic guarantee level.

\begin{table}[htbp]
  \centering
  \caption{Performance of different algorithms on the \textbf{Experiment model}. The nominal coverage probability is $95\%$, and the nominal guarantee level is $85\%$. In the `Residual type' column, `Normal' means standard normal distribution, and `Laplace' means Laplace distribution with location $0$ and scale $1/\sqrt{2}$; this setting  ensures the errors have variance $1$. In the `Algorithm' column, `RB' means residual-based bootstrap, and `\textit{MF/MB}' means \textit{MF/MB} bootstrap. We choose $B,\mathcal{B}_1,\mathcal{B}_2 $ all equal to
$ 2500$ in \textit{RBUG} and \textit{PRBUG}.}
  \label{tableMain}
  \scriptsize
  \begin{tabular}{l l l l l l l l}
  \hline\hline\hline
  Residual type & Sample size & Algorithm       & \multicolumn{4}{l}{Quantiles of coverage probabilities}   & Guarantee level\\
                &             &                 & $25\%$    & $45\%$      & $65\%$        & $85\%$          &              \\
  Normal        &   50        & RB              & $85.8\%$  & $89.6\%$    & $92.4\%$      & $94.9\%$        & $14.3\%$     \\
                &             & \textit{MF/MB}  & $97.1\%$  & $98.2\%$    & $98.9\%$      & $99.5\%$        & $87.1\%$     \\
                &             & \textit{RBUG}   & $98.2\%$  & $99.2\%$    & $99.6\%$      & $99.9\%$        & $93.0\%$     \\
                &             & \textit{PRBUG}  & $99.9\%$  & $100.0\%$   & $100.0\%$     & $100.0\%$       & $99.9\%$     \\
                \hline
  Normal        &   100       & RB              & $90.9\%$  & $92.5\%$    & $93.9\%$      & $95.4\%$        & $19.3\%$     \\
                &             & \textit{MF/MB}  & $95.4\%$  & $96.5\%$    & $97.4\%$      & $98.2\%$        & $80.2\%$     \\
                &             & \textit{RBUG}   & $96.7\%$  & $97.8\%$    & $98.7\%$      & $99.5\%$        & $90.9\%$     \\
                &             & \textit{PRBUG}  & $98.8\%$  & $99.4\%$    & $99.7\%$      & $99.9\%$        & $99.7\%$     \\
                \hline
  Normal        &   400       & RB              & $93.6\%$  & $94.3\%$    & $95.0\%$      & $95.8\%$        & $34.4\%$     \\
                &             & \textit{MF/MB}  & $94.6\%$  & $95.3\%$    & $95.9\%$      & $96.5\%$        & $64.5\%$     \\
                &             & \textit{RBUG}   & $95.6\%$  & $96.2\%$    & $96.7\%$      & $97.4\%$        & $87.9\%$     \\
                &             & \textit{PRBUG}  & $96.4\%$  & $96.9\%$    & $97.3\%$      & $97.9\%$        & $96.0\%$     \\
                \hline
  Normal        &   800       & RB              & $94.2\%$  & $94.7\%$    & $95.1\%$      & $95.7\%$        & $42.4\%$     \\
                &             & \textit{MF/MB}  & $94.6\%$  & $95.1\%$    & $95.6\%$      & $96.1\%$        & $60.8\%$     \\
                &             & \textit{RBUG}   & $95.3\%$  & $95.8\%$    & $96.3\%$      & $96.8\%$        & $85.6\%$     \\
                &             & \textit{PRBUG}  & $95.8\%$  & $96.2\%$    & $96.6\%$      & $97.0\%$        & $94.4\%$     \\
                \hline
  Normal        &  1600       & RB              & $94.4\%$  & $94.8\%$    & $95.2\%$      & $95.2\%$        & $44.3\%$     \\
                &             & \textit{MF/MB}  & $94.6\%$  & $95.0\%$    & $95.4\%$      & $95.8\%$        & $55.3\%$     \\
                &             & \textit{RBUG}   & $95.2\%$  & $95.6\%$    & $96.0\%$      & $96.4\%$        & $84.4\%$     \\
                &             & \textit{PRBUG}  & $95.4\%$  & $95.8\%$    & $96.1\%$      & $96.5\%$        & $90.2\%$     \\
  \hline\hline
  Laplace       & 50          & RB              & $87.4\%$  & $90.7\%$    & $92.7\%$      & $94.9\%$        & $13.6\%$     \\
                &             & \textit{MF/MB}  & $96.0\%$  & $97.4\%$    & $98.2\%$      & $98.8\%$        & $83.5\%$     \\
                &             & \textit{RBUG}   & $97.1\%$  & $98.2\%$    & $98.9\%$      & $99.5\%$        & $89.9\%$     \\
                &             & \textit{PRBUG}  & $99.4\%$  & $99.7\%$    & $99.8\%$      & $99.9\%$        & $99.9\%$     \\
                \hline
  Laplace       & 100         & RB              & $91.0\%$  & $92.8\%$    & $94.0\%$      & $95.3\%$        & $20.3\%$     \\
                &             & \textit{MF/MB}  & $94.4\%$  & $95.6\%$    & $96.5\%$      & $97.5\%$        & $67.3\%$     \\
                &             & \textit{RBUG}   & $95.7\%$  & $97.0\%$    & $97.9\%$      & $98.7\%$        & $83.7\%$     \\
                &             & \textit{PRBUG}  & $97.6\%$  & $98.4\%$    & $98.9\%$      & $99.5\%$        & $97.3\%$     \\
                \hline
  Laplace       &400          & RB              & $93.6\%$  & $94.4\%$    & $95.0\%$      & $95.8\%$        & $36.3\%$     \\
                &             & \textit{MF/MB}  & $94.3\%$  & $95.0\%$    & $95.6\%$      & $96.3\%$        & $55.1\%$     \\
                &             & \textit{RBUG}   & $95.3\%$  & $95.9\%$    & $96.5\%$      & $97.1\%$        & $82.3\%$     \\
                &             & \textit{PRBUG}  & $95.8\%$  & $96.4\%$    & $96.9\%$      & $97.4\%$        & $93.4\%$     \\
                \hline
  Laplace       &800          & RB              & $94.1\%$  & $94.7\%$    & $95.1\%$      & $95.7\%$        & $41.0\%$     \\
                &             & \textit{MF/MB}  & $94.5\%$  & $94.9\%$    & $95.4\%$      & $96.0\%$        & $53.6\%$     \\
                &             & \textit{RBUG}   & $95.3\%$  & $95.7\%$    & $96.1\%$      & $96.7\%$        & $83.1\%$     \\
                &             & \textit{PRBUG}  & $95.5\%$  & $96.0\%$    & $96.4\%$      & $96.8\%$        & $91.6\%$     \\
                \hline
  Laplace       &1600         & RB              & $94.4\%$  & $94.8\%$    & $95.2\%$      & $95.7\%$        & $47.5\%$     \\
                &             & \textit{MF/MB}  & $94.6\%$  & $95.0\%$    & $95.4\%$      & $95.8\%$        & $54.6\%$     \\
                &             & \textit{RBUG}   & $95.2\%$  & $95.6\%$    & $95.9\%$      & $96.4\%$        & $82.4\%$     \\
                &             & \textit{PRBUG}  & $95.4\%$  & $95.7\%$    & $96.1\%$      & $96.5\%$        & $86.0\%$     \\
  \hline\hline\hline
  \end{tabular}
\end{table}

\begin{figure}[htbp]
    \subfigure[Normal, Residual-based bootstrap]{
        \includegraphics[width=1.7in]{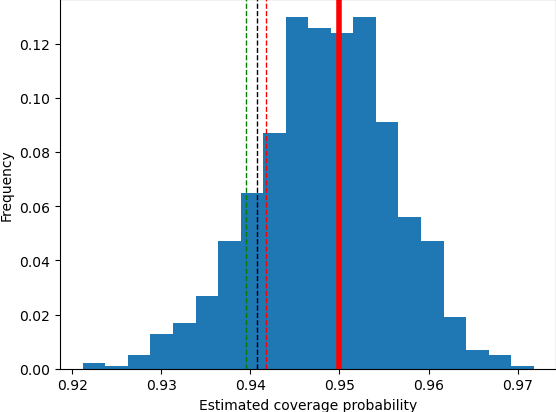}
    }
    \subfigure[Normal, \textit{MF/MB} bootstrap]{
        \includegraphics[width=1.7in]{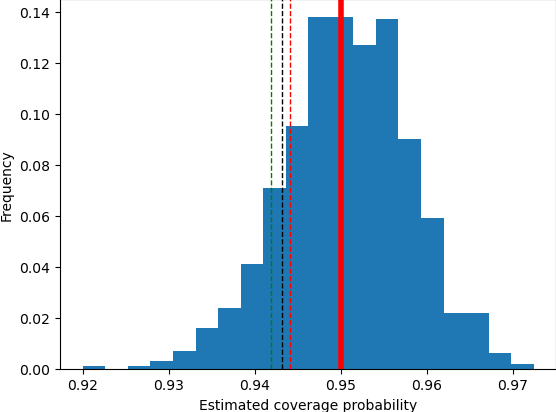}
    }
    \subfigure[Normal, \textit{RBUG}]{
        \includegraphics[width=1.7in]{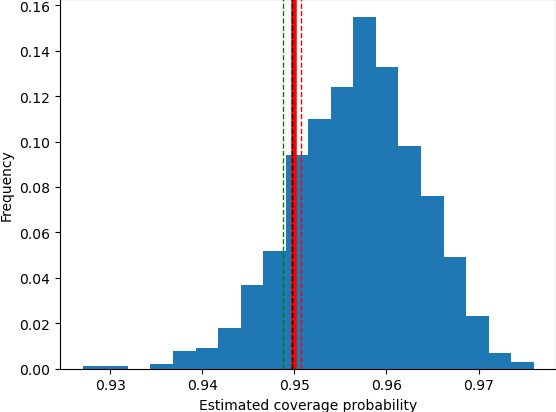}
    }
    \subfigure[Normal, \textit{PRBUG}]{
        \includegraphics[width=1.7in]{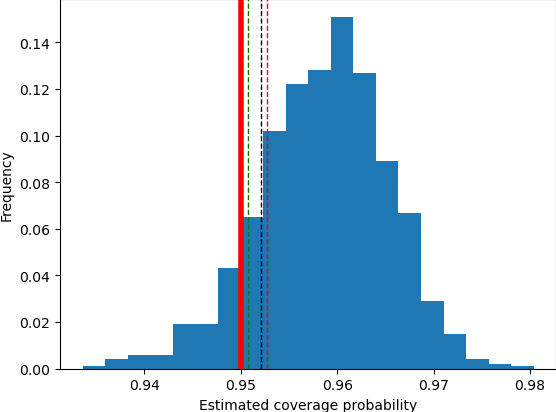}
    }

    \subfigure[Laplace, Residual-based bootstrap ]{
        \includegraphics[width=1.7in]{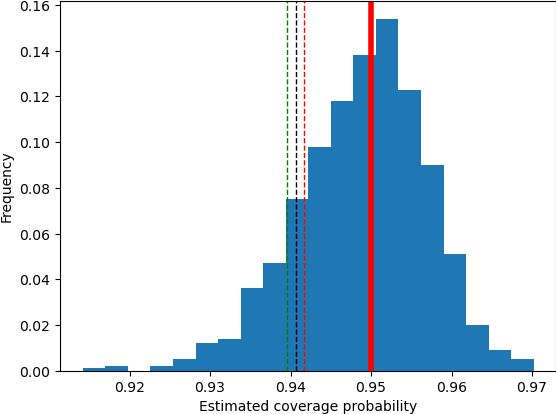}
    }
    \subfigure[Laplace, \textit{MF/MB} bootstrap]{
        \includegraphics[width=1.7in]{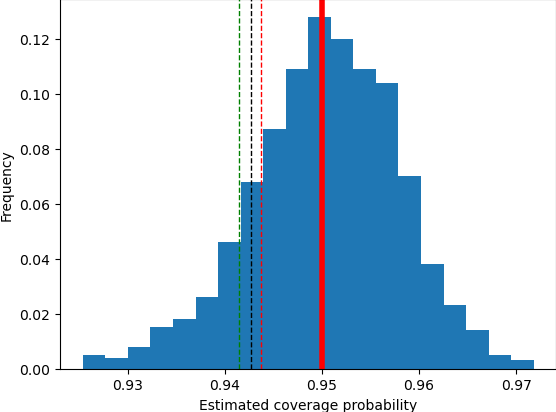}
    }
    \subfigure[Laplace, \textit{RBUG}]{
        \includegraphics[width=1.7in]{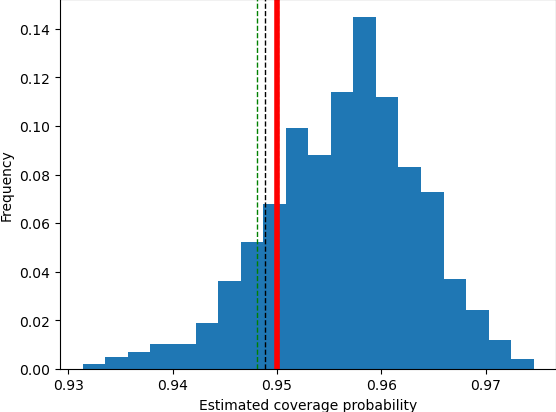}
    }
    \subfigure[Laplace, \textit{PRBUG}]{
        \includegraphics[width=1.7in]{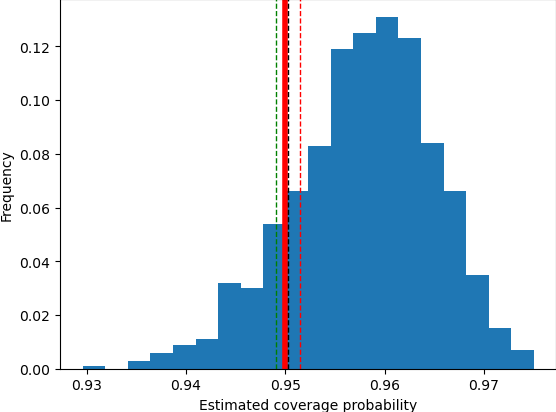}
    }
  \caption{Histograms for the conditional coverage probabilities. We use the \textbf{Experiment model} with sample size $1600$ and different error distributions--see table \ref{tableMain}.
The solid red line indicates the nominal coverage probability($95\%$); the green dashed line, the black dashed line, and the red dashed line respectively indicates the $12\%, 15\%, 18\%$ quantile of conditional coverage probabilities. The nominal guarantee level for \textit{RBUG} and \textit{PRBUG} is $85\%$.}
  \label{Hist}
\end{figure}

\pagebreak
\section{Conclusion}
Focusing on the fixed design linear model, in this paper we derive the asymptotic distribution of the difference between the conditional coverage probability of a
nominal prediction interval $\mathbf{P}^*\left(\vert y_f - x_f^T\widehat{\beta}\vert\leq x\right)$ and the conditional coverage probability of a prediction interval for residual-based bootstrapped observations $\mathbf{P}^*\left(\vert y_f^* - x_f^T\widehat{\beta}^*\vert\leq x\right)$. According to this result, the prediction interval generated by residual-based bootstrap has
approximately $50\%$ probability to yield conditional under-coverage.

We then develop a new bootstrap algorithm that generates prediction intervals with arbitrarily assigned conditional coverage probability and guarantee level, and prove its asymptotic validity. Our theoretical results are corroborated by several finite-sample
simulations.

 Residual-based and the MF/MB  bootstrap are  widely used for prediction in numerous settings like nonparametric/nonlinear regression, quantile regression,  time series
analysis (regression with dependent errors, autoregression, etc.), and others.
 We expect our ideas to be applicable   in those settings as well;
future work will  address the details.  Furthermore, the case of
 high-dimensional linear regression is of  current interest, i.e., where $p$ is allowed to diverge as $n\to \infty$; this can also be the subject of future work.

\label{ch5}

\section*{Acknowledgement}
We appreciate the valuable suggestions from Dr. Xiaoou Pan that helped improve the quality of this paper. This research was partially supported by NSF Grant DMS 19-14556.

\bibliographystyle{unsrt}
\bibliography{Draft}

\begin{thebibliography}{10}

\bibitem{doi:10.1080/01621459.1996.10476934}
Jun Shao.
\newblock Bootstrap model selection.
\newblock {\em Journal of the American Statistical Association},
  91(434):655--665, 1996.

\bibitem{xie2009}
Huiliang Xie and Jian Huang.
\newblock Scad-penalized regression in high-dimensional partially linear
  models.
\newblock {\em Ann. Statist.}, 37(2):673--696, 04 2009.

\bibitem{liu2013}
Hanzhong Liu and Bin Yu.
\newblock Asymptotic properties of lasso+mls and lasso+ridge in sparse
  high-dimensional linear regression.
\newblock {\em Electron. J. Statist.}, 7:3124--3169, 2013.

\bibitem{freedman1981}
David~A. Freedman.
\newblock Bootstrapping regression models.
\newblock {\em Ann. Statist.}, 9(6):1218--1228, 11 1981.

\bibitem{PrdInference}
Seymour Geisser.
\newblock {\em Predictive Inference: An Introduction}.
\newblock Chapman and Hall/CRC, 1 edition, 1993.

\bibitem{Model-based}
Dimitris~N. Politis.
\newblock {\em Model-Free Prediction and Regression}.
\newblock Springer International Publishing, USA, 1 edition, 2015.

\bibitem{ESl}
Trevor Hastie, Robert Tibshirani, and Jerome Friedman.
\newblock {\em The Elements of Statistical Learning}.
\newblock Springer-Verlag New York, 2 edition, 2009.

\bibitem{NEURIPS2019_5103c358}
Yaniv Romano, Evan Patterson, and Emmanuel Candes.
\newblock Conformalized quantile regression.
\newblock In {\em Advances in Neural Information Processing Systems},
  volume~32, pages 3543--3553. Curran Associates, Inc., 2019.

\bibitem{doi:10.1080/01621459.1985.10478220}
Robert~A. Stine.
\newblock Bootstrap prediction intervals for regression.
\newblock {\em Journal of the American Statistical Association},
  80(392):1026--1031, 1985.

\bibitem{ConPrd}
Glenn Shafer and Vladimir Vovk.
\newblock A tutorial on conformal prediction.
\newblock {\em Journal of Machine Learning Research}, 9:371--421, 2008.

\bibitem{AlgorithmWord}
Vladimir Vovk, Alexander Gammerman, and Glenn Shafer.
\newblock {\em Algorithmic Learning in a Random World}.
\newblock Springer, Boston, MA, 1 edition, 2005.

\bibitem{doi:10.1111/rssb.12021}
Jing Lei and Larry Wasserman.
\newblock Distribution-free prediction bands for non-parametric regression.
\newblock {\em Journal of the Royal Statistical Society: Series B (Statistical
  Methodology)}, 76(1):71--96, 2014.

\bibitem{doi:10.1080/01621459.2017.1307116}
Jing Lei, Max G'Sell, Alessandro Rinaldo, Ryan~J. Tibshirani, and Larry
  Wasserman.
\newblock Distribution-free predictive inference for regression.
\newblock {\em Journal of the American Statistical Association},
  113(523):1094--1111, 2018.

\bibitem{doi:10.1002/sta4.261}
Matteo Sesia and Emmanuel~J. Cand\`es.
\newblock A comparison of some conformal quantile regression methods.
\newblock {\em Stat}, 9(1):e261, 2020.
\newblock e261 sta4.261.

\bibitem{Stochastic}
Erhan \c{C}inlar.
\newblock {\em Probability and Stochastics}.
\newblock Springer-Verlag New York, 1 edition, 2011.

\bibitem{Rpackage}
Ryan Tibshirani, Nina Jeliazkova, and Erin LeDell.
\newblock Conformal inference r project.
\newblock \url{https://github.com/ryantibs/conformal}.
\newblock Accessed: 2020-12-19.

\bibitem{billing}
Patrick Billingsley.
\newblock {\em Convergence of probability measures}.
\newblock Wiley Series in Probability and Statistics: Probability and
  Statistics. John Wiley \& Sons Inc., New York, second edition, 1999.
\newblock A Wiley-Interscience Publication.

\bibitem{subsampling}
Dimitris~N. Politis, Joseph~P. Romano, and Michael Wolf.
\newblock {\em Subsampling}.
\newblock Springer, New York, NY, USA, 1st edition, 1999.

\bibitem{doi:10.1080/01621459.1990.10476225}
Lori~A. Thombs and William~R. Schucany.
\newblock Bootstrap prediction intervals for autoregression.
\newblock {\em Journal of the American Statistical Association},
  85(410):486--492, 1990.

\bibitem{PAN2016467}
Li~Pan and Dimitris~N. Politis.
\newblock Bootstrap prediction intervals for markov processes.
\newblock {\em Computational Statistics $\&$ Data Analysis}, 100:467 -- 494,
  2016.

\bibitem{PAN20161}
Li~Pan and Dimitris~N. Politis.
\newblock Bootstrap prediction intervals for linear, nonlinear and
  nonparametric autoregressions.
\newblock {\em Journal of Statistical Planning and Inference}, 177:1 -- 27,
  2016.

\bibitem{10.2307/2240410}
Peter~J. Bickel and David~A. Freedman.
\newblock Some asymptotic theory for the bootstrap.
\newblock {\em The Annals of Statistics}, 9(6):1196--1217, 1981.

\bibitem{10.2307/2242984}
Marjorie~G. Hahn.
\newblock Conditions for sample-continuity and the central limit theorem.
\newblock {\em The Annals of Probability}, 5(3):351--360, 1977.

\bibitem{limitEmpirica;}
Jean Jacod and Albert~N Shiryaev.
\newblock {\em Limit Theorems for Stochastic Processes}.
\newblock Springer-Verlag Berlin Heidelberg, 2003.

\bibitem{mathStat}
Jun Shao.
\newblock {\em Mathematical Statistics}.
\newblock Springer-Verlag New York, 2003.

\bibitem{10.1093/biomet/asz020}
Mengyu Xu, Danna Zhang, and Wei~Biao Wu.
\newblock {Pearson's chi-squared statistics: approximation theory and beyond}.
\newblock {\em Biometrika}, 106(3):716--723, 04 2019.

\bibitem{10.2307/2237541}
R.~Ranga Rao.
\newblock Relations between weak and uniform convergence of measures with
  applications.
\newblock {\em The Annals of Mathematical Statistics}, 33(2):659--680, 1962.

\bibitem{empirical_process}
Hira~L. Koul.
\newblock {\em Weighted Empirical Processes in Dynamic Nonlinear Models}.
\newblock Springer-Verlag New York, 2002.

\bibitem{10.5555/2422911}
Roger~A. Horn and Charles~R. Johnson.
\newblock {\em Matrix Analysis}.
\newblock Cambridge University Press, USA, 2nd edition, 2012.

\bibitem{optimalTransport}
C\'edric Villani.
\newblock {\em Optimal Transport}.
\newblock Springer, Berlin, Heidelberg, 1st edition, 2009.

\bibitem{Linreg}
George~A.F. Seber and Alan~J. Lee.
\newblock {\em Linear Regression Analysis}.
\newblock John Wiley $\&$ Sons, 2 edition, 2003.

\end{thebibliography}

\clearpage
\appendix

\section{Proof of theorem \ref{theoPd}}
\label{lemmaS}
Suppose random variables $A,B$ satisfy $\vert A - B\vert\leq \delta,\delta>0$, then $\forall x\in\mathbf{R}$, $-\mathbf{1}_{x-\delta<B\leq x}\leq\mathbf{1}_{A\leq x} - \mathbf{1}_{B\leq x}\leq \mathbf{1}_{x<B\leq x+\delta}$, which implies
\begin{equation}
\mathbf{E}\vert\mathbf{1}_{A\leq x} - \mathbf{1}_{B\leq x}\vert\leq \mathbf{E}\vert\mathbf{1}_{A\leq x} - \mathbf{1}_{B\leq x}\vert\mathbf{1}_{\vert A -B\vert\leq\delta} + Prob(\vert A - B\vert>\delta)\leq Prob(\vert A - B\vert>\delta) + Prob(x-\delta<B\leq x+\delta)
\label{Deltas}
\end{equation}
For $\forall t_i\in[0,1], s_i\in\mathbf{R}, i = 1,2,...,r$, we define $\widetilde{\mathcal{M}}_m$ as in \eqref{AsymptoticM} and $t^{'}_i = 2mt_i - m$,
\begin{equation}
\begin{aligned}
0\leq \lim_{n\to\infty}\mathbf{E}\left(\sum_{i = 1}^rs_i\widetilde{\mathcal{M}}_m(t_i)\right)^2
= \lim_{n\to\infty}\sum_{i = 1}^r\sum_{j = 1}^r s_is_j\sigma^2F^{'}(t^{'}_i)F^{'}(t^{'}_j)\left(x_f^T\left(\frac{X^TX}{n}\right)^{-1}x_f + 1 - 2x_f^T\left(\frac{X^TX}{n}\right)^{-1}\overline{x}_n\right)\\
-\lim_{n\to\infty}\sum_{i=1}^r\sum_{j=1}^r s_is_j\left(x_f^T\left(\frac{X^TX}{n}\right)^{-1}\overline{x}_n - 1\right)\times \left(F^{'}(t_i^{'})H(t_j^{'}) + F^{'}(t_j^{'})H(t^{'}_i))\right)\\
+\lim_{n\to\infty}\sum_{i = 1}^r\sum_{j = 1}^r s_is_j\left(F(\min(t^{'}_i,t^{'}_j))- F(t^{'}_i)F(t^{'}_j)\right)
=\sum_{i = 1}^r\sum_{j = 1}^r s_is_j\mathcal{V}(t^{'}_i,t^{'}_j)
\end{aligned}
\label{VPositive}
\end{equation}
Therefore, $\forall t_i\in\mathbf{R}, i = 1,2,...,r$, the matrix $\{\mathcal{V}(t_i,t_j)\}_{i,j = 1,2,...,r}$ is symmetric positive semi-definite.
Suppose assumption 3),
\begin{equation}
\mathbf{E}\widehat{\sigma}^2\leq \frac{2}{n}\sum_{i = 1}^n \mathbf{E}\left(\epsilon_i - \frac{\sum_{j = 1}^n\epsilon_j}{n}\right)^2 + \frac{2}{n}\sum_{i = 1}^n \mathbf{E}((x_i - \overline{x}_n)^T(X^TX)^{-1}X^T\epsilon)^2\leq 2\sigma^2 + \frac{8M^2\sigma^2}{n}\Vert\left(\frac{X^TX}{n}\right)^{-1}\Vert_2
\label{SigmaBound}
\end{equation}
Here $\widehat{\sigma}$ is defined in \eqref{sigmaHat}, and $\Vert \left(\frac{X^TX}{n}\right)^{-1}\Vert_2$ is the matrix 2-norm of $\left(\frac{X^TX}{n}\right)^{-1}$. These three results are frequently used in the proofs. This section then introduces some lemmas.

\begin{lemma}
Suppose assumption 1. to 4. hold true.

1. $\forall 0<m\in\mathbf{N}$, $\exists$ a Gaussian process $\mathcal{M}_m$ in $\mathbf{D}$ satisfying \eqref{stopro} and having continuous sample paths almost surely.

2. For any given $\xi>0$,
\begin{equation}
\lim_{\delta\to 0,\delta>0} \mathbf{P}\left(\sup_{y,z\in[0,1],\vert y - z\vert<\delta}\vert\mathcal{M}_m(y) - \mathcal{M}_m(z)\vert > \xi\right) = 0
\label{ContinuA}
\end{equation}
and if a sequence of stochastic processes $\widetilde{\mathcal{M}}_{m,n}, n = 1,2,...$ satisfy $\widetilde{\mathcal{M}}_{m,n}\to_{\mathcal{L}} \mathcal{M}_m$ under Skohord topology, $\exists \delta>0$ such that
\begin{equation}
\limsup_{n\to\infty} \mathbf{P}\left(\sup_{y,z\in[0,1],\vert y - z\vert<\delta}\vert\widetilde{\mathcal{M}}_{m,n}(y) - \widetilde{\mathcal{M}}_{m,n}(z)\vert \geq \xi\right)\leq \xi
\label{approxConverge}
\end{equation}
\label{lemmaExt}
\end{lemma}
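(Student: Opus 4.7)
The plan is to address the two parts of the lemma in sequence, leveraging the positive semi-definiteness of $\mathcal{V}$ already established in \eqref{VPositive}.

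For Part 1, I would first invoke Kolmogorov's extension theorem. Equation \eqref{VPositive} shows that, for arbitrary $t_1,\ldots,t_r\in[0,1]$ and $s_1,\ldots,s_r\in\mathbf{R}$, the quadratic form $\sum_{i,j} s_i s_j\,\mathcal{V}(2mt_i-m,\,2mt_j-m)$ is the limit of non-negative quantities, so the prescribed covariance kernel in \eqref{stopro} is symmetric positive semi-definite. Kolmogorov's extension theorem then yields a mean-zero Gaussian process with the required finite-dimensional distributions. To upgrade to continuous sample paths I would apply Kolmogorov's continuity criterion after computing
\[
\mathbf{E}\bigl|\mathcal{M}_m(y)-\mathcal{M}_m(z)\bigr|^{2} = \mathcal{V}(y',y') - 2\mathcal{V}(y',z') + \mathcal{V}(z',z'), \qquad y':=2my-m,\ z':=2mz-m,
\]
and decomposing $\mathcal{V}$ into its three defining pieces. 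Assumption 1 gives $\sup|F''|<\infty$ (so $F'$ is Lipschitz) and, since $H'(x)=xF'(x)$, $H$ is Lipschitz on the bounded range relevant here; the two smooth pieces therefore contribute $O(|y-z|^2)$. The Brownian-bridge piece contributes $|F(y')-F(z')|-(F(y')-F(z'))^{2}\le 2m\sup|F'|\,|y-z|$. Altogether $\mathbf{E}|\mathcal{M}_m(y)-\mathcal{M}_m(z)|^{2}\le C_m |y-z|$, and Gaussianity of the increment upgrades this to $\mathbf{E}|\mathcal{M}_m(y)-\mathcal{M}_m(z)|^{4}\le 3C_m^{2}|y-z|^{2}$, which via Kolmogorov's criterion produces a modification with H\"older continuous paths of any exponent $<1/2$; this modification of course lies in $\mathbf{D}[0,1]$.

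For the first assertion of Part 2, continuity of sample paths on the compact set $[0,1]$ makes almost every realization uniformly continuous, so the modulus $\omega_{\mathcal{M}_m}(\delta):=\sup_{|y-z|<\delta}|\mathcal{M}_m(y)-\mathcal{M}_m(z)|$ decreases almost surely to $0$ as $\delta\downarrow 0$ (measurability is obtained by restricting the supremum to a countable dense subset, which is legitimate by continuity), and the claimed convergence in probability is immediate. For the second assertion the point is to transfer weak convergence under the Skorohod topology into control on the ordinary (uniform) modulus of continuity. Since the limit $\mathcal{M}_m$ concentrates on $C[0,1]$, Prokhorov's theorem gives tightness of $\{\widetilde{\mathcal{M}}_{m,n}\}$ in $\mathbf{D}[0,1]$, and Billingsley's tightness characterization in the continuous-limit regime (see Theorems 13.2 and 15.5 of \cite{billing}) replaces the Skorohod modulus $\omega'$ by the ordinary modulus $\omega$. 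Concretely, given $\xi>0$ I would first use the first assertion to choose $\delta_1>0$ with $\mathbf{P}(\omega_{\mathcal{M}_m}(\delta_1)\ge\xi)<\xi/2$, then invoke the tightness characterization to find $\delta\in(0,\delta_1]$ and $n_0$ for which $\mathbf{P}(\omega_{\widetilde{\mathcal{M}}_{m,n}}(\delta)\ge\xi)\le\xi$ whenever $n\ge n_0$.

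The principal obstacle I foresee is this last step: Skorohod convergence to a continuous limit does not automatically transfer uniform control of increments, because Skorohod time-changes can in principle reshuffle where increments are large. The resolution relies on the equivalence between Skorohod and uniform topology on $C[0,1]$, packaged as Billingsley's continuous-limit version of tightness; once that equivalence is in hand the argument is a direct application. Everything else --- the positive semi-definiteness check, the Lipschitz estimates on $F'$ and $H$, and the routine consequence of uniform continuity on a compact interval --- is elementary once the three pieces of $\mathcal{V}$ are unpacked.
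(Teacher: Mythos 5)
Your proposal is correct, and it arrives at all three conclusions by a recognizably different route. For existence and continuity of $\mathcal{M}_m$, the paper first constructs a process in $\mathbf{D}[0,1]$ via the moment estimate \eqref{fourMoment}--\eqref{inter} and Billingsley's Theorem 13.6, and then \emph{upgrades} to a continuous version by invoking a Garsia-type modulus-of-continuity theorem that delivers an explicit bound $\vert\mathcal{M}_m(t)-\mathcal{M}_m(s)\vert\leq A\,H(\vert t-s\vert)$ with $\mathbf{E}A^4<\infty$; you instead pass directly from Kolmogorov's extension theorem to Kolmogorov's continuity criterion, which is more standard and equally sufficient. One small quantitative note: the fourth-moment bound $\mathbf{E}\vert\mathcal{M}_m(y)-\mathcal{M}_m(z)\vert^4\leq 3C_m^2\vert y-z\vert^2$ by itself yields H\"older exponent $<1/4$; to reach ``any exponent $<1/2$'' you implicitly need arbitrary even moments of the Gaussian increment, which of course you have. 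For \eqref{ContinuA}, the paper uses its explicit modulus to write $\mathbf{P}(\sup_{\vert y-z\vert<\delta}\vert\mathcal{M}_m(y)-\mathcal{M}_m(z)\vert>\xi)\leq \xi^{-4}\mathbf{E}A^4\cdot H^4(\delta)$, while you argue qualitatively from uniform continuity on the compact interval; both are fine, though the paper's version is quantitative. For \eqref{approxConverge}, the paper defines $h_\delta(f)=\sup_{\vert x-y\vert<\delta}\vert f(x)-f(y)\vert$, shows $h_\delta$ is Skorohod-continuous at continuous $f$, and then applies the continuous mapping theorem followed by the Portmanteau inequality; you instead invoke the characterization of $C$-tightness (Skorohod convergence to an a.s.-continuous limit implies ordinary-modulus control). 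These are two standard packagings of the same underlying fact, and you correctly flagged and resolved the only genuine subtlety --- that Skorohod convergence does not transfer uniform increment control unless the limit is continuous. The paper's route via $h_\delta$ is marginally more self-contained since it avoids citing the tightness-characterization theorems, but your argument would hold up.
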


\begin{proof}[proof of lemma \ref{lemmaExt}]
From \eqref{VPositive}, for any $t_i\in[0,1], i = 1,2,...,r$, the random vector $(\mathcal{M}_m(t_1),...,\mathcal{M}_m(t_r))^T$ has joint normal distribution with mean $0$ and covariance matrix $\{\mathcal{V}(t_i,t_j)\}_{i,j = 1,2,...,r}$, so the consistency conditions in Kolmogorov extension theorem are satisfied. $\forall 0\leq t_1\leq t\leq t_2\leq 1$,
\begin{equation}
\begin{aligned}
\mathbf{E}\vert\mathcal{M}_m(t)-\mathcal{M}_m(t_1)\vert^2\vert \mathcal{M}_m(t)-\mathcal{M}_m(t_2)\vert^2\leq \frac{1}{2}\left(\mathbf{E}\vert\mathcal{M}_m(t)-\mathcal{M}_m(t_1)\vert^4+\mathbf{E}\vert \mathcal{M}_m(t)-\mathcal{M}_m(t_2)\vert^4\right)\\
\leq \frac{3}{2}\left(\mathbf{E}(\mathcal{M}_m(t)-\mathcal{M}_m(t_1))^2+\mathbf{E}(\mathcal{M}_m(t)-\mathcal{M}_m(t_2))^2\right)^2
\end{aligned}
\label{fourMoment}
\end{equation}
We define $t^{'} = 2mt - m$. Form assumption 1), $\exists$ a constant  $C > 0$ with
\begin{equation}
\begin{aligned}
\mathbf{E}(\mathcal{M}_m(t)-\mathcal{M}_m(t_1))^2 = \sigma^2(x_f^TA^{-1}x_f+1-2x_f^TA^{-1}b)(F^{'}(t^{'})-F^{'}(t^{'}_1))^2
+F(t^{'})-F(t^{'}_1) - (F(t)-F(t^{'}_1))^2 \\
- 2(x_f^TA^{-1}b-1)(F^{'}(t^{'})-F^{'}(t^{'}_1))(H(t^{'})-H(t^{'}_1))\leq C(t - t_1)\\
\end{aligned}
\label{inter}
\end{equation}
Similarly, $\mathbf{E}(\mathcal{M}_m(t_2)-\mathcal{M}_m(t))^2\leq C(t_2 - t)$, and \eqref{fourMoment} implies $\mathbf{E}\vert\mathcal{M}_m(t)-\mathcal{M}_m(t_1)\vert^2\vert \mathcal{M}_m(t)-\mathcal{M}_m(t_2)\vert^2\leq \frac{3}{2}C^2(t_2 - t_1)^2$. Therefore, (13.15) in \cite{billing} is satisfied by setting $\alpha = \beta = 1$, and choosing the non-decreasing, continuous function $F(x) = \frac{\sqrt{3}}{\sqrt{2}}Cx$. \eqref{inter} also implies (13.16) in \cite{billing}. From theorem 13.6 in \cite{billing}, $\exists \mathcal{M}_m\in\mathbf{D}$ satisfying \eqref{stopro}. According to \eqref{inter}, $\mathbf{E}(\mathcal{M}_m(t)-\mathcal{M}_m(t_1))^4\leq 3C^2(t-t_1)^2$, so theorem 2.3 in \cite{10.2307/2242984} is satisfied by choosing $r = 4$ and the function $f(x) = 3C^2x^2\Rightarrow \int_{[0,1]}x^{-(r+1)/r}f^{1/r}(x)dx = 4(3C^2)^{1/4}<\infty$. In particular, we can choose $\mathcal{M}_m\in\mathbf{D}$ such that $\vert\mathcal{M}_m(t) - \mathcal{M}_m(t_1)\vert\leq AH(\vert t - t_1\vert)$ almost surely, A is a random variable with $\mathbf{E}A^4<\infty$, $H$ is a continuous nondecreasing function on $[0,1]$, $H(0) = 0$. This implies $\mathcal{M}_m$ has continuous sample paths almost surely.

We prove \eqref{ContinuA} by
\begin{equation}
\begin{aligned}
\mathbf{P}\left(\sup_{y,z\in[0,1],\vert y - z\vert<\delta}\vert\mathcal{M}_m(y) - \mathcal{M}_m(z)\vert > \xi\right)\leq \frac{\mathbf{E}A^4}{\xi^4}\times H^4(\delta)
\end{aligned}
\end{equation}
.

For any given $\delta>0$, we define a function $h_\delta(f) = \sup_{x,y\in[0,1],\vert x - y\vert < \delta}\vert f(x) - f(y)\vert, f\in\mathbf{D}$. From section 12, \cite{billing}, if $f_n, n = 1,...$ converges to $f$ in $\mathbf{D}$, then $\exists$ strictly increasing mappings $\lambda_n:[0,1]\to[0,1], n = 1,2,...$ such that $\lim_{n\to\infty}\sup_{x\in[0,1]}\vert\lambda_n(x) - x\vert = 0$, and $\lim_{n\to\infty}\sup_{x\in[0,1]}\vert f_n(\lambda_n(x)) - f(x)\vert = 0$; so
\begin{equation}
\begin{aligned}
\vert h_\delta(f_n) - h_\delta(f)\vert\leq \sup_{x,y\in[0,1],\vert x - y\vert<\delta}\vert f_n(x) - f_n(y) - f(x) + f(y)\vert
\leq \sup_{x\in[0,1]}\vert f_n(x) - f(x)\vert + \sup_{y\in[0,1]}\vert f_n(y) - f(y)\vert\\
\leq 2(\sup_{x\in[0,1]}\vert f_n(x) - f(\lambda_n^{-1}(x))\vert + \sup_{x\in[0,1]}\vert f(\lambda_n^{-1}(x)) - f(x)\vert)
\end{aligned}
\label{Hdels}
\end{equation}
If $f$ is continuous on $[0,1]$, then $\lim_{n\to\infty} \vert h_\delta(f_n) - h_\delta(f)\vert = 0$. For $\mathcal{M}_m$ is continuous almost surely, and $\mathbf{R},\mathbf{D}$ are Polish spaces(theorem 12.2 in \cite{billing}), 3.8, page 348 in \cite{limitEmpirica;} implies $h_\delta(\widetilde{\mathcal{M}}_{m,n})\to_{\mathcal{L}} h_\delta(\mathcal{M}_m)$, and theorem 1.9, \cite{mathStat} implies
\begin{equation}
\begin{aligned}
\limsup_{n\to\infty} \mathbf{P}\left(\sup_{x,y\in[0,1],\vert x - y\vert<\delta}\vert\widetilde{\mathcal{M}}_{m,n}(x) - \widetilde{\mathcal{M}}_{m,n}(y)\vert \geq \xi\right)\leq \mathbf{P}\left(\sup_{x,y\in[0,1],\vert x - y\vert < \delta}\vert\mathcal{M}_m(x) - \mathcal{M}_m(y)\vert\geq \xi\right) < \xi
\end{aligned}
\label{limcont}
\end{equation}
for sufficiently small $\delta>0$.
\end{proof}

$\widetilde{\mathcal{M}}_{m,n}$ may have discontinuities. However, the discontinuities vanish asymptotically according to lemma \ref{lemmaExt}. Combine lemma \ref{lemmaExt} with \eqref{Deltas}, we derive the following corollary:

\begin{corollary}
Suppose assumption 1. to 4. hold true. Then for any given $0<c<1/4$,
\begin{equation}
\lim_{\delta \to 0}\sup_{\vert x - y\vert+\vert z-w\vert<\delta}\vert\mathbf{P}(\mathcal{M}_m(x) - \mathcal{M}_m(1-x)\leq z) - \mathbf{P}(\mathcal{M}_m(y)-\mathcal{M}_m(1-y)\leq w)\vert = 0
\label{firProb}
\end{equation}
and if $\widetilde{\mathcal{M}}_{m,n}\to_{\mathcal{L}}\mathcal{M}_m$, then
\begin{equation}
\lim_{n\to\infty}\sup_{x\in[\frac{1}{2}+c,1-c],z\in\mathbf{R}}\vert\mathbf{P}(\widetilde{\mathcal{M}}_{m,n}(x) - \widetilde{\mathcal{M}}_{m,n}^-(1-x)\leq z) - \mathbf{P}(\mathcal{M}_m(x) - \mathcal{M}_m(1-x)\leq z)\vert = 0
\label{SecRes}
\end{equation}
Here $(x,z),(y,w)\in[\frac{1}{2} + c, 1 - c]\times \mathbf{R}$.
\label{coroLink}
\end{corollary}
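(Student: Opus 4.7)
The plan hinges on the fact that $\mathcal{M}_m(x) - \mathcal{M}_m(1-x)$ is centered Gaussian with variance $\mathcal{U}(2mx-m)$, as follows from \eqref{stopro}. By assumption 4 together with the continuity of $F, F', H$, the function $\mathcal{U}$ is continuous and strictly positive on $(0,\infty)$; hence on the compact interval $x \in [\tfrac{1}{2}+c,\, 1-c]$ its value lies in some $[\sigma_{\min}^2, \sigma_{\max}^2] \subset (0,\infty)$.

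For \eqref{firProb}, the CDF has the explicit form $(x,z) \mapsto \Phi\bigl(z/\sqrt{\mathcal{U}(2mx-m)}\bigr)$, which is jointly continuous in $(x,z)$. Uniform continuity on the non-compact domain $[\tfrac{1}{2}+c,\, 1-c] \times \mathbf{R}$ follows by splitting: on any bounded rectangle $[\tfrac{1}{2}+c,\, 1-c] \times [-R,R]$ it is immediate from joint continuity and compactness, while for $|z| > R$ the upper bound $\sigma_{\max}^2$ on the variance forces $\Phi(z/\sqrt{\mathcal{U}(2mx-m)})$ to be within any prescribed $\xi/4$ of $0$ or $1$, uniformly in $x$.

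For \eqref{SecRes}, I would first establish pointwise convergence in $x$. Since $\mathcal{M}_m$ has continuous sample paths almost surely (lemma \ref{lemmaExt}), Skorohod representation yields a coupling on which $\widetilde{\mathcal{M}}_{m,n} \to \mathcal{M}_m$ uniformly on $[0,1]$ almost surely; continuity of the limit then passes the uniform convergence on to the left limits, giving $\widetilde{\mathcal{M}}_{m,n}^-(1-x) \to \mathcal{M}_m(1-x)$ a.s.\ at every $x$. Hence $\widetilde{\mathcal{M}}_{m,n}(x) - \widetilde{\mathcal{M}}_{m,n}^-(1-x)$ converges in law to $\mathcal{M}_m(x) - \mathcal{M}_m(1-x) \sim N(0, \mathcal{U}(2mx-m))$, and Polya's theorem upgrades this to uniform convergence in $z \in \mathbf{R}$ for each fixed $x$, since the limit CDF is continuous.

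To upgrade to uniformity in $x$, I fix $\xi > 0$, use \eqref{firProb} and \eqref{approxConverge} to pick $\delta > 0$ so that (i) the limit CDF varies by at most $\xi/4$ under $\delta$-perturbations of $(x,z)$, and (ii) $\limsup_n \mathbf{P}\bigl(\sup_{|y-w|<\delta}|\widetilde{\mathcal{M}}_{m,n}(y) - \widetilde{\mathcal{M}}_{m,n}(w)| \geq \delta\bigr) \leq \xi/4$. I then cover $[\tfrac{1}{2}+c,\, 1-c]$ by a finite $\delta$-net $\{x_1, \ldots, x_N\}$, apply Polya at each $x_k$, and for arbitrary $x$ within $\delta$ of some $x_k$ invoke the inequality \eqref{Deltas} at the opening of this appendix with $A_n = \widetilde{\mathcal{M}}_{m,n}(x) - \widetilde{\mathcal{M}}_{m,n}^-(1-x)$ and $B_n = \widetilde{\mathcal{M}}_{m,n}(x_k) - \widetilde{\mathcal{M}}_{m,n}^-(1-x_k)$; the oscillation term is handled by \eqref{approxConverge}, and the small-ball probability is controlled by comparison to the Gaussian limit of $B_n$, whose density is bounded by $1/(\sqrt{2\pi}\sigma_{\min})$. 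The main technical obstacle I anticipate is applying the modulus-of-continuity bound \eqref{approxConverge} simultaneously to values and left limits of $\widetilde{\mathcal{M}}_{m,n}$; I resolve this by sandwiching $\widetilde{\mathcal{M}}_{m,n}^-(1-x)$ between $\widetilde{\mathcal{M}}_{m,n}(y)$ for $y$ close to but below $1-x$, so that the oscillation bound transfers directly.
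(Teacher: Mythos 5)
Your proposal is correct, and for part \eqref{firProb} it takes a genuinely different — and more elementary — route than the paper. The paper bounds the CDF difference via the general inequality \eqref{Deltas} applied to $A = \mathcal{M}_m(x) - \mathcal{M}_m(1-x)$ and $B = \mathcal{M}_m(y) - \mathcal{M}_m(1-y)$, then controls $\mathbf{P}(|A-B| > \xi)$ by the modulus of continuity from Lemma~\ref{lemmaExt} and the small-ball term by the Gaussian formula. You instead use the Gaussian formula $\Phi\bigl(z/\sqrt{\mathcal{U}(2mx-m)}\bigr)$ directly and argue uniform continuity by splitting into a compact rectangle plus Gaussian tails uniform in $x$; this is cleaner for an explicitly Gaussian limit. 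For \eqref{SecRes} your three-step structure (pointwise in $x$, Polya for uniformity in $z$, $\delta$-net plus oscillation control for uniformity in $x$) mirrors the paper's. You replace the paper's continuous-mapping argument (via $g_x(f) = f(x) - f^-(1-x)$) with a Skorohod-representation coupling, and you replace the paper's smoothing argument with $G_{\psi,t}$ functions and the Ranga Rao uniform-approximation result by a direct appeal to Polya's theorem; both substitutions are sound — the paper's smoothing is in effect an explicit proof of the Polya-type uniformity you invoke. You also correctly anticipate and resolve the issue of transferring the oscillation bound \eqref{approxConverge} to the left limits by sandwiching $\widetilde{\mathcal{M}}_{m,n}^-(1-x)$ with nearby ordinary values, exactly as the paper implicitly does. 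One minor notational slip: in your statement of (ii) the oscillation threshold inside the probability should be a fixed tolerance (e.g.\ $\xi/4$) rather than $\delta$, since \eqref{approxConverge} is of the form ``for given $\xi$ there is $\delta$ with $\limsup_n\mathbf{P}(\text{osc}_\delta \geq \xi) \leq \xi$''; with that reading the argument closes as intended.
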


\begin{proof}[proof of corollary \ref{coroLink}]
Without loss of generality, we assume $z\leq w$. From \eqref{Deltas}, $\forall \xi>0$,
\begin{equation}
\begin{aligned}
\vert\mathbf{P}(\mathcal{M}_m(x) - \mathcal{M}_m(1-x)\leq z) - \mathbf{P}(\mathcal{M}_m(y)-\mathcal{M}_m(1-y)\leq w)\vert\leq \mathbf{P}(\vert\mathcal{M}_m(x) - \mathcal{M}_m(y)\vert>\xi/2)\\
+\mathbf{P}(\vert\mathcal{M}_m(1-x) -\mathcal{M}_m(1-y)\vert>\xi/2)
 +\mathbf{P}(z-\xi<\mathcal{M}_m(y)-\mathcal{M}_m(1-y)\leq z+\xi)+\mathbf{P}(z<\mathcal{M}_m(y)-\mathcal{M}_m(1-y)\leq w)
\end{aligned}
\label{probCur}
\end{equation}
We define $y^{'} = 2my - m$. From assumption 4., $\min_{y\in[\frac{1}{2}+c, 1-c]}\mathcal{U}(y^{'})>0$, so   $\mathbf{P}(z<\mathcal{M}_m(y)-\mathcal{M}_m(1-y)\leq w) = \Phi\left(\frac{w}{\sqrt{\mathcal{U}(y^{'})}}\right) - \Phi\left(\frac{z}{\sqrt{\mathcal{U}(y^{'})}}\right)\leq \frac{\delta}{\min_{y\in[\frac{1}{2}+c, 1-c]}\sqrt{\mathcal{U}(y^{'})}}$. Similarly $\mathbf{P}(z-\xi<\mathcal{M}_m(y)-\mathcal{M}_m(1-y)\leq z+\xi)\leq \frac{2\xi}{\min_{y\in[\frac{1}{2}+c, 1-c]}\sqrt{\mathcal{U}(y^{'})}}$. \eqref{firProb} is proved by applying lemma \ref{lemmaExt} to \eqref{probCur}.

$\forall x\in[\frac{1}{2}+c, 1-c]$, define $g_x:\mathbf{D}\to\mathbf{R}:g_x(f) = f(x) - f^-(1-x)$. We use the same notation as \eqref{Hdels}. If $f_n$ converges to $f$ in $\mathbf{D}$ and $f$ is continuous, $\vert g_x(f_n) - g_x(f)\vert\leq \vert f_n(x) - f(\lambda^{-1}_n(x))\vert + \vert f(\lambda^{-1}_n(x)) - f(x)\vert + \limsup_{t\to1-x,t<1-x}\vert f_n(t) - f(\lambda^{-1}_n(t))\vert + \limsup_{t\to1-x,t<1-x}\vert f(\lambda^{-1}_n(t)) - f(t)\vert$, which tends to $0$ as $n\to\infty$. 3.8, page 348 in \cite{limitEmpirica;} implies $g_x(\widetilde{\mathcal{M}}_{m,n})\to_{\mathcal{L}} g_x(\mathcal{M}_m)$. $\forall \psi>0, t\in\mathbf{R}$, we define $G_0(x) = (1-\min(1,\max(x,0))^4)^4$, and $G_{\psi, t}(x) = G_0(\psi x-\psi t)$. From \cite{10.1093/biomet/asz020}, $\exists$ a constant $C > 0$ with
\begin{equation}
\mathbf{1}_{x\leq t}\leq G_{\psi,t}(x)\leq \mathbf{1}_{x\leq t+1/\psi},\ \ \sup_{x,t}\vert G_{\psi,t}^{'}(x)\vert\leq C\psi,\ \ \sup_{x,t}\vert G_{\psi,t}^{''}(x)\vert\leq C\psi^2,\ \ \sup_{x,t}\vert G_{\psi,t}^{'''}(x)\vert\leq C\psi^3
\label{pGs}
\end{equation}
$\forall \psi>0$, we define the set $\mathcal{A}_\psi = \{G_{\psi,t}|t\in\mathbf{R}\}$. $\forall \delta>0$, choose $\gamma = \delta/(C\psi)$, then $\forall G_{\psi,t}\in \mathcal{A}_\psi, x,y\in\mathbf{R}$ with $\vert x-y\vert<\gamma$, $\vert G_{\psi,t}(x)-G_{\psi,t}(y)\vert\leq C\psi\vert x-y\vert<\delta\Rightarrow \mathcal{A}_\psi$ is equi-continuous and uniformly bounded by $1$, from theorem 3.1 in \cite{10.2307/2237541},
\begin{equation}
\lim_{n\to\infty}\sup_{G_{\psi,t}\in\mathcal{A}_\psi}\vert\mathbf{E}G_{\psi,t}\left(\widetilde{\mathcal{M}}_{m,n}(x) - \widetilde{\mathcal{M}}^-_{m,n}(1-x)\right) - \mathbf{E}G_{\psi,t}\left(\mathcal{M}_m(x) - \mathcal{M}_m(1-x)\right)\vert = 0
\end{equation}
for any fixed $x\in[\frac{1}{2}+c, 1-c]$. From \eqref{pGs},
\begin{equation}
\begin{aligned}
\mathbf{P}(\widetilde{\mathcal{M}}_{m,n}(x) - \widetilde{\mathcal{M}}_{m,n}^-(1-x)\leq z) - \mathbf{P}(\mathcal{M}_m(x) - \mathcal{M}_m(1-x)\leq z)\\
\leq \mathbf{E}G_{\psi,z}\left(\widetilde{\mathcal{M}}_{m,n}(x) - \widetilde{\mathcal{M}}_{m,n}^-(1-x)\right) -\mathbf{E}G_{\psi,z-1/\psi}(\mathcal{M}_m(x) - \mathcal{M}_m(1-x))\\
\leq \sup_{G_{\psi,t}\in\mathcal{A}_\psi}\vert\mathbf{E}G_{\psi,t}\left(\widetilde{\mathcal{M}}_{m,n}(x) - \widetilde{\mathcal{M}}^-_{m,n}(1-x)\right) - \mathbf{E}G_{\psi,t}\left(\mathcal{M}_m(x) - \mathcal{M}_m(1-x)\right)\vert + \mathbf{P}\left(z-\frac{1}{\psi}<\mathcal{M}_m(x) - \mathcal{M}_m(1-x)\leq z+\frac{1}{\psi}\right)\\
\mathbf{P}(\widetilde{\mathcal{M}}_{m,n}(x) - \widetilde{\mathcal{M}}_{m,n}^-(1-x)\leq z) - \mathbf{P}(\mathcal{M}_m(x) - \mathcal{M}_m(1-x)\leq z)\\
\geq \mathbf{E}G_{\psi,z-1/\psi}\left(\widetilde{\mathcal{M}}_{m,n}(x) - \widetilde{\mathcal{M}}^-_{m,n}(1-x)\right) - \mathbf{E}G_{\psi,z}\left(\mathcal{M}_m(x) - \mathcal{M}_m(1-x)\right)\\
\geq -\sup_{G_{\psi,t}\in\mathcal{A}_\psi}\vert\mathbf{E}G_{\psi,t}\left(\widetilde{\mathcal{M}}_{m,n}(x) - \widetilde{\mathcal{M}}^-_{m,n}(1-x)\right) - \mathbf{E}G_{\psi,t}\left(\mathcal{M}_m(x) - \mathcal{M}_m(1-x)\right)\vert - \mathbf{P}\left(z-\frac{1}{\psi}<\mathcal{M}_m(x) - \mathcal{M}_m(1-x)\leq z+\frac{1}{\psi}\right)\\
\end{aligned}
\end{equation}
Choose $y = x, z = z+\frac{1}{\psi}, w = z-\frac{1}{\psi}$ in \eqref{firProb}, and let $\psi\to\infty$,
\begin{equation}
\lim_{n\to\infty}\sup_{z\in\mathbf{R}}\vert \mathbf{P}(\widetilde{\mathcal{M}}_{m,n}(x) - \widetilde{\mathcal{M}}_{m,n}^-(1-x)\leq z) - \mathbf{P}(\mathcal{M}_m(x) - \mathcal{M}_m(1-x)\leq z)\vert = 0
\label{thrd}
\end{equation}
Finally, for any given $\xi>0$, we choose $\frac{1}{2}+c = x_0<x_1<...<x_M=1-c$ and $x_i - x_{i-1}<\delta,i = 1,2,...,M$ with sufficiently small $\delta>0$, $\forall x\in[\frac{1}{2}+c,1-c],\exists I\in\{0,1,...,M\}$ such that $\vert x-x_I\vert<\delta$, and
\begin{equation}
\begin{aligned}
\sup_{z\in\mathbf{R}}\vert\mathbf{P}(\widetilde{\mathcal{M}}_{m,n}(x) - \widetilde{\mathcal{M}}_{m,n}^-(1-x)\leq z) - \mathbf{P}(\mathcal{M}_m(x) - \mathcal{M}_m(1-x)\leq z)\vert\\
\leq\sup_{z\in\mathbf{R}}\vert \mathbf{P}(\widetilde{\mathcal{M}}_{m,n}(x) - \widetilde{\mathcal{M}}_{m,n}^-(1-x)\leq z) - \mathbf{P}(\widetilde{\mathcal{M}}_{m,n}(x_I) - \widetilde{\mathcal{M}}_{m,n}^-(1-x_I)\leq z)\vert\\
+\max_{I = 1,2,...,M}\sup_{z\in\mathbf{R}}\vert \mathbf{P}(\widetilde{\mathcal{M}}_{m,n}(x_I) - \widetilde{\mathcal{M}}_{m,n}^-(1-x_I)\leq z) - \mathbf{P}(\mathcal{M}_m(x_I) - \mathcal{M}_m(1-x_I)\leq z)\vert\\
+\sup_{z\in\mathbf{R}}\vert\mathbf{P}(\mathcal{M}_m(x_I) - \mathcal{M}_m(1-x_I)\leq z) - \mathbf{P}(\mathcal{M}_m(x) - \mathcal{M}_m(1-x)\leq z)\vert
\end{aligned}
\end{equation}
From \eqref{Deltas}, $\forall \xi>0$,
\begin{equation}
\begin{aligned}
\sup_{z\in\mathbf{R}}\vert \mathbf{P}(\widetilde{\mathcal{M}}_{m,n}(x) - \widetilde{\mathcal{M}}_{m,n}^-(1-x)\leq z) - \mathbf{P}(\widetilde{\mathcal{M}}_{m,n}(x_I) - \widetilde{\mathcal{M}}_{m,n}^-(1-x_I)\leq z)\vert\leq \mathbf{P}\left(\vert \widetilde{\mathcal{M}}_{m,n}^-(1-x) - \widetilde{\mathcal{M}}_{m,n}^-(1-x_I)\vert>\frac{\xi}{2}\right)\\
+\mathbf{P}\left(\vert \widetilde{\mathcal{M}}_{m,n}(x) - \widetilde{\mathcal{M}}_{m,n}(x_I)\vert>\frac{\xi}{2}\right)
+ 2\max_{I = 1,2,...,M}\sup_{z\in\mathbf{R}}\vert \mathbf{P}(\widetilde{\mathcal{M}}_{m,n}(x_I) - \widetilde{\mathcal{M}}_{m,n}^-(1-x_I)\leq z) - \mathbf{P}(\mathcal{M}_m(x_I) - \mathcal{M}_m(1-x_I)\leq z)\vert\\
+\sup_{z\in\mathbf{R}}\mathbf{P}(z-\xi<\mathcal{M}_m(x_I) - \mathcal{M}_m(1-x_I)\leq z+\xi)
\end{aligned}
\end{equation}
Since $\sup_{x\in[\frac{1}{2}+c,1-c]}\mathbf{P}\left(\vert \widetilde{\mathcal{M}}_{m,n}(x) - \widetilde{\mathcal{M}}_{m,n}(x_I)\vert>\frac{\xi}{2}\right)$ and $\sup_{x\in[\frac{1}{2}+c,1-c]}\mathbf{P}\left(\vert \widetilde{\mathcal{M}}_{m,n}^-(1-x) - \widetilde{\mathcal{M}}_{m,n}^-(1-x_I)\vert>\frac{\xi}{2}\right)$ are less or equal to $ \mathbf{P}\left(\sup_{y,z\in[0,1],\vert y - z\vert<\delta}\vert\widetilde{\mathcal{M}}_{m,n}(y) - \widetilde{\mathcal{M}}_{m,n}(z)\vert>\frac{\xi}{2}\right)$, from \eqref{approxConverge}, \eqref{firProb}, and \eqref{thrd}, we prove \eqref{SecRes}.
\end{proof}

In the second lemma, we define $\widehat{\alpha}(x) = \frac{1}{\sqrt{n}}\sum_{i = 1}^n(\mathbf{1}_{\widehat{\epsilon}_i\leq x} - F(x))$, and $\widetilde{\alpha}^*(x) = \frac{1}{\sqrt{n}}\sum_{i = 1}^n(\mathbf{1}_{\epsilon^*_i\leq x} - \widehat{F}(x))$. $\widehat{\epsilon}_i,i=1,2,...,n$ and $\widehat{F}$ are defined in \eqref{DefResi}, and $\epsilon^*_i, i = 1,2,...,n$ are i.i.d. random variables generated from $\widehat{F}$. $\epsilon^*_i$ serves as bootstrapped residuals in algorithm \ref{alg1}. We define $\widetilde{F}(x) = \frac{1}{n}\sum_{i = 1}^n\mathbf{1}_{\epsilon_i\leq x}$ and $\widetilde{\alpha}(x) = \frac{1}{\sqrt{n}}\sum_{i=1}^n(\mathbf{1}_{\epsilon_i\leq x} - F(x)),\forall x\in\mathbf{R}$. The notation $O_p$ and $o_p$ have the same meaning as definition 1.9, \cite{mathStat}.

\begin{lemma}
Suppose assumption 1. to 4. hold true. Then for any given $\xi>0$, $-\infty<r\leq s <\infty$, $\exists \delta>0$ such that
\begin{equation}
\limsup_{n\to\infty}\mathbf{P}\left(\sup_{x,y\in[r,s],\vert x-y\vert<\delta}\vert\widehat{\alpha}(x) - \widehat{\alpha}(y)\vert > \xi\right)<\xi
\label{Half}
\end{equation}
Besides, $\exists \delta>0$ and $N>0$, $\forall n\geq N$,
\begin{equation}
\mathbf{P}\left(\left\{\mathbf{P}^*\left(\sup_{x,y\in[r,s],\vert x - y\vert<\delta}\vert \widetilde{\alpha}^*(x) - \widetilde{\alpha}^*(y)\vert>\xi\right)>\xi\right\}\right)<\xi
\label{Quan_Boot}
\end{equation}
\label{lemmaBoot}
\end{lemma}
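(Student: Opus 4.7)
The plan is to establish both statements as asymptotic equicontinuity (modulus-of-continuity) results for the residual and bootstrap empirical processes on the compact interval $[r,s]$.

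For \eqref{Half}, I would decompose $\widehat{\alpha}(x) = \widetilde{\alpha}(x) + R_n(x)$, where $R_n(x) = \frac{1}{\sqrt{n}}\sum_{i=1}^n(\mathbf{1}_{\widehat{\epsilon}_i \leq x} - \mathbf{1}_{\epsilon_i \leq x})$. Setting $\xi_i = \epsilon_i - \widehat{\epsilon}_i = x_i^T(\widehat{\beta} - \beta) + \widehat{\lambda}$, one has $\mathbf{1}_{\widehat{\epsilon}_i \leq x} = \mathbf{1}_{\epsilon_i \leq x + \xi_i}$. From assumption 3, the bound $\Vert x_i\Vert_2 \leq M$, the CLT-type representation $\widehat{\beta} - \beta = (X^TX)^{-1}X^T\epsilon$, and $\mathbf{E}\epsilon_1^2 < \infty$, one obtains $\max_i |\xi_i| = O_p(n^{-1/2})$. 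Combining a second-order Taylor expansion of $F$ (valid because $\sup_x|F''(x)| < \infty$) with stochastic equicontinuity of $\widetilde{\alpha}$ over intervals of length $O(n^{-1/2})$, I would establish the Koul--Loynes type uniform expansion
\begin{equation*}
\sup_{x \in [r,s]} \bigl|R_n(x) - F'(x)\,\sqrt{n}\,\bar{\xi}\bigr| = o_p(1),
\end{equation*}
where $\bar{\xi} = \frac{1}{n}\sum_i \xi_i = \bar{\epsilon}$ by direct computation using $\widehat{\lambda} = \bar{\epsilon} - \overline{x}_n^T(\widehat{\beta}-\beta)$. Since $\sqrt{n}\bar{\xi} = \sqrt{n}\bar{\epsilon} = O_p(1)$ and $F'$ is uniformly continuous on $[r,s]$ (again from $\sup_x|F''(x)|<\infty$), the drift $F'(x)\sqrt{n}\bar{\xi}$ is automatically equicontinuous in $x \in [r,s]$ with high probability. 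Combined with the asymptotic equicontinuity of $\widetilde{\alpha}$ on $[r,s]$ given by Donsker's theorem (the half-line class is Donsker under the continuous $F$, with natural semimetric dominated by $\sqrt{\|F'\|_\infty |x-y|}$), this yields \eqref{Half}.

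For \eqref{Quan_Boot}, conditional on the data the sample $\epsilon_1^*,\ldots,\epsilon_n^*$ is i.i.d.\ with distribution $\widehat{F}$, so $\widetilde{\alpha}^*$ is a classical centered empirical process indexed by the half-line class. I would apply a conditional modulus-of-continuity estimate of Bernstein/Alexander type, which bounds
\begin{equation*}
\mathbf{P}^*\Bigl(\sup_{|x-y|<\delta,\, x,y \in[r,s]} |\widetilde{\alpha}^*(x) - \widetilde{\alpha}^*(y)| > \xi\Bigr) \leq C_1 \exp\!\Bigl(-\frac{C_2 \xi^2}{\tau_n(\delta)}\Bigr)
\end{equation*}
in terms of $\tau_n(\delta) = \sup_{|x-y|<\delta,\, x,y \in[r,s]} |\widehat{F}(x) - \widehat{F}(y)|$. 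By the Glivenko--Cantelli theorem together with the uniform continuity of $F$ and the bound $F'(x) \leq \|F'\|_\infty < \infty$ on $[r,s]$, one has $\tau_n(\delta) \leq \|F'\|_\infty \delta + o_p(1)$. Choosing $\delta > 0$ so that $C_1 \exp(-C_2\xi^2 / (2\|F'\|_\infty \delta)) < \xi/2$ and then $N$ large enough for the $o_p(1)$ term to be controlled yields \eqref{Quan_Boot}.

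The main obstacle is the uniform-in-$x$ expansion of $R_n(x)$. The technical heart is a stochastic equicontinuity statement for the centered indicator process $(x,t) \mapsto \frac{1}{\sqrt{n}}\sum_i[\mathbf{1}_{\epsilon_i \leq x+t_i} - F(x+t_i) - \mathbf{1}_{\epsilon_i \leq x} + F(x)]$ restricted to vectors $t=(t_1,\ldots,t_n)$ with $\max_i|t_i| \leq C/\sqrt{n}$. Establishing this requires careful chaining via the VC-type covering of the half-line class combined with Bernstein's inequality; once this empirical-process bound is in hand, the remaining reductions are the Taylor expansion of $F$ and the elementary $F'(x)\sqrt{n}\bar{\xi}$ computation.
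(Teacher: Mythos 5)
Your proposal is correct in its essential structure and arrives at the same decomposition as the paper, but it reaches the conclusion by a genuinely different technical route. Both arguments hinge on writing the residual-based empirical process as the true-error empirical process plus a deterministic-looking drift plus a negligible remainder; your computation that the drift is $F'(x)\sqrt{n}\,\bar\xi = F'(x)\sqrt{n}\,\bar\epsilon$ agrees exactly with the paper, which keeps the two contributions separate as $\sqrt{n}F'(x)\overline{x}_n^T(\widehat\beta-\beta) + \sqrt{n}F'(x)\widehat\lambda$ (their sum is $\sqrt{n}F'(x)\bar\epsilon$). Where you part ways is in the machinery. For \eqref{Half}, the paper invokes a Koml\'os--Major--Tusn\'ady-type strong approximation (via Bickel and Freedman's Lemmas 4.1--4.2) to couple $\widetilde\alpha$ to a Brownian bridge $B\circ F$, and then controls the modulus of continuity of $B$; the uniform expansion isolating the $F'(x)$-drift is taken off the shelf as Theorem 6.2.1 of the cited weighted-empirical-process monograph, which is precisely the Koul--Loynes oscillation estimate you propose to re-derive via chaining over a VC class of half-lines. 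For \eqref{Quan_Boot}, the paper again uses the coupling route --- couples $\widetilde\alpha^*$ to $B\circ\widehat F$ and then controls increments of $B$ over $\widehat F$-intervals, the size of which is controlled by Glivenko--Cantelli and the uniform continuity of $F$ --- whereas you propose a direct Bernstein/Alexander-type concentration bound on the conditional bootstrap process with variance proxy $\tau_n(\delta)=\sup_{|x-y|<\delta}|\widehat F(x)-\widehat F(y)|$, controlled by the same Glivenko--Cantelli argument.

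Both routes are sound. The coupling route buys a sharp, quantitative $\log n/\sqrt{n}$ bound in one stroke and lets the paper re-use the same Brownian-bridge modulus estimate for both parts of the lemma, which is economical. Your route avoids the strong-approximation machinery entirely at the cost of carrying out the chaining bound on the oscillation process $\frac{1}{\sqrt n}\sum_i[\mathbf 1_{\epsilon_i\le x+t_i}-F(x+t_i)-\mathbf 1_{\epsilon_i\le x}+F(x)]$ uniformly over $\|t\|_\infty\le C/\sqrt n$ and over $x\in[r,s]$; you correctly flag this as the technical heart, and note that it needs to be done carefully because the $t_i=\xi_i$ are data-dependent, so one first proves a uniform-in-$(x,t)$ bound for non-random $t$ and then plugs in $\xi$ using $\max_i|\xi_i|=O_p(n^{-1/2})$. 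If you were to write this out fully, either establish the chaining bound in detail or cite a Koul-type oscillation theorem as the paper does; the present sketch is a valid plan but defers the hard estimate.
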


\begin{proof}[proof of lemma \ref{lemmaBoot}]
From assumption 4., $F$ is strictly increasing in $\mathbf{R}$. From lemma 4.1 and 4.2, \cite{10.2307/2240410}, $\exists$ independent random variables $U_i,i=1,2,...$ with uniform distribution on $[0,1]$, a Brownian bridge $B$, and a constant $C$ such that
\begin{equation}
\mathbf{P}\left(\sup_{x\in[0,1]}\vert \frac{1}{\sqrt{n}}\sum_{i=1}^n(\mathbf{1}_{U_i\leq x} - x) - B(x)\vert\geq C\log(n)/\sqrt{n}\right)\leq C\log(n)/\sqrt{n}
\label{Ui}
\end{equation}
and $\forall 0<\delta <1/2$, $\xi > 0$,
\begin{equation}
\mathbf{E}\sup_{x,y\in[0,1], \vert x - y\vert<\delta}\vert B(x) - B(y)\vert\leq C(-\delta\log(\delta))^{1/2}\Rightarrow \mathbf{P}\left(\sup_{x,y\in[0,1], \vert x - y\vert<\delta}\vert B(x) - B(y)\vert > \xi\right)\leq \frac{C(-\delta\log(\delta))^{1/2}}{\xi}
\label{ExpBrown}
\end{equation}
We choose $\epsilon_i = F^{-1}(U_i),i=1,2,...,n$($\epsilon_i$ has distribution $F$ according to page 150, \cite{billing}),
\begin{equation}
\begin{aligned}
\widetilde{\alpha}(x) = \frac{1}{\sqrt{n}}(\sum_{i=1}^n \mathbf{1}_{U_i\leq F(x)} - F(x))\Rightarrow \mathbf{P}\left(\sup_{x\in \mathbf{R}}\vert \widetilde{\alpha}(x)- B(F(x))\vert\geq C\log(n)/\sqrt{n}\right)\leq C\log(n)/\sqrt{n}
\end{aligned}
\label{ExpAlpha}
\end{equation}
From assumption 3., $\max_{i=1,...,n}\frac{1}{n}x_i^T(X^TX/n)^{-1}x_i\leq \frac{1}{n}M^2\Vert(X^TX/n)^{-1}\Vert_2 = O(1/n)$; $\mathbf{E}\Vert (X^TX)^{1/2}(\widehat{\beta}-\beta)\Vert_2^2 = p\sigma^2$, which implies $\Vert (X^TX)^{1/2}(\widehat{\beta}-\beta)\Vert_2 = O_p(1)$; $\mathbf{E}\widehat{\lambda}^2 \leq 2\mathbf{E}(\frac{1}{n}\sum_{i=1}^n\epsilon_i)^2 + 2\mathbf{E}(\overline{x}_n^T(\widehat{\beta} - \beta))^2 = \frac{2\sigma^2}{n} + \frac{2\sigma^2 \overline{x}_n(X^TX/n)^{-1}\overline{x}_n}{n}\Rightarrow \widehat{\lambda} = O_p(1/\sqrt{n})$. We define $\widehat{\alpha}^{'}(z) = \sqrt{n}\left(\frac{1}{n}\sum_{i=1}^n\mathbf{1}_{\widehat{\epsilon}_i^{'}\leq z}-F(z)\right)$. From theorem 6.2.1 in \cite{empirical_process}, $\sup_{x\in\mathbf{R}}\vert\widehat{\alpha}^{'}(x) - \widetilde{\alpha}(x)-\sqrt{n}F^{'}(x)\overline{x}_n^T(\widehat{\beta}-\beta)\vert = o_p(1)$. Therefore,
\begin{equation}
\begin{aligned}
\sup_{x\in\mathbf{R}}\vert\widehat{\alpha}(x) - \widetilde{\alpha}(x) - \sqrt{n}F^{'}(x)\overline{x}_n^T(\widehat{\beta} - \beta) - \sqrt{n}F^{'}(x)\widehat{\lambda}\vert\leq  \sup_{x\in\mathbf{R}}\vert\widehat{\alpha}^{'}(x) - \widetilde{\alpha}(x)-\sqrt{n}F^{'}(x)\overline{x}_n^T(\widehat{\beta}-\beta)\vert\\
+\sup_{x\in\mathbf{R}}\vert\widetilde{\alpha}(x+\widehat{\lambda}) - \widetilde{\alpha}(x)\vert+\sup_{x\in\mathbf{R}}\sqrt{n}\vert(F^{'}(x+\widehat{\lambda}) - F^{'}(x))\overline{x}_n^T(\widehat{\beta} - \beta)\vert +\sup_{x\in\mathbf{R}}\sqrt{n}\vert F(x+\widehat{\lambda}) - F(x) - F^{'}(x)\widehat{\lambda}\vert
\end{aligned}
\label{Convert}
\end{equation}
From assumption 1. and 3., and Taylor's theorem, $\sup_{x\in\mathbf{R}}\sqrt{n}\vert(F^{'}(x+\widehat{\lambda}) - F^{'}(x))\overline{x}_n^T(\widehat{\beta} - \beta)\vert$ and $\sup_{x\in\mathbf{R}}\sqrt{n}\vert F(x+\widehat{\lambda}) - F(x) - F^{'}(x)\widehat{\lambda}\vert$ have order $O_p(1/\sqrt{n})$. From \eqref{ExpAlpha}, with probability tending to $1$,
\begin{equation}
\sup_{x\in\mathbf{R}}\vert\widetilde{\alpha}(x+\widehat{\lambda}) - \widetilde{\alpha}(x)\vert\leq\frac{2C\log(n)}{\sqrt{n}} + \sup_{x\in\mathbf{R}}\vert B(F(x+\widehat{\lambda})) - B(F(x))\vert
\end{equation}
$F$ is uniform continuous according to assumption 1., so $\sup_{x\in\mathbf{R}}\vert\widetilde{\alpha}(x+\widehat{\lambda}) - \widetilde{\alpha}(x)\vert = o_p(1)\Rightarrow \sup_{x\in\mathbf{R}}\vert\widehat{\alpha}(x) - \widetilde{\alpha}(x) - \sqrt{n}F^{'}(x)\overline{x}_n^T(\widehat{\beta} - \beta) - \sqrt{n}F^{'}(x)\widehat{\lambda}\vert = o_p(1)$. For any given $-\infty<r\leq s<\infty$ and sufficiently small $\delta>0$,
\begin{equation}
\begin{aligned}
\sup_{x,y\in[r,s],\vert x - y\vert<\delta}\vert\widehat{\alpha}(x) - \widehat{\alpha}(y)\vert\leq \sup_{x,y\in[r,s],\vert x - y\vert<\delta}\vert\widetilde{\alpha}(x) - \widetilde{\alpha}(y)\vert + \sup_{x,y\in[r,s],\vert x - y\vert<\delta}\sqrt{n}\vert(F^{'}(x) - F^{'}(y))\times(\overline{x}_n^T(\widehat{\beta} - \beta)+\widehat{\lambda})\vert +o_p(1)
\end{aligned}
\end{equation}

From assumption 1., \eqref{ExpAlpha} and \eqref{ExpBrown}, we prove \eqref{Half}.

We define the function $\widehat{\phi}(x) = \inf\{t| x\leq \widehat{F}(t)\},x\in[0,1]$. Page 150, \cite{billing} implies $\widehat{\phi}(x)\leq t \Leftrightarrow x\leq \widehat{F}(t)$. If $U$ has uniform distribution on $[0,1]$, then $\widehat{\phi}(U)$ has distribution $\widehat{F}$. We choose $\epsilon^*_i = \widehat{\phi}(U_i), i = 1,2,...,n$; \eqref{Ui} implies
\begin{equation}
\widetilde{\alpha}^*(x) = \frac{1}{\sqrt{n}}\sum_{i = 1}^n(\mathbf{1}_{U_i\leq \widehat{F}(x)} - \widehat{F}(x))\Rightarrow \mathbf{P}^*\left(\sup_{x\in \mathbf{R}}\vert \widetilde{\alpha}^*(x)- B(\widehat{F}(x))\vert\geq C\log(n)/\sqrt{n}\right)\leq C\log(n)/\sqrt{n}
\label{Secc}
\end{equation}
From assumption 3., $\widehat{F}(x) = \frac{1}{n}\sum_{i = 1}^n\mathbf{1}_{\epsilon_i\leq x + x_i^T(\widehat{\beta} - \beta)}\leq \frac{1}{n}\sum_{i = 1}^n\mathbf{1}_{\epsilon_i\leq x + M\Vert\widehat{\beta} - \beta\Vert_2} = \widetilde{F}(x+ M\Vert\widehat{\beta} - \beta\Vert_2)$; and $\widehat{F}(x)\geq \widetilde{F}(x - M\Vert\widehat{\beta} - \beta\Vert_2)$. For any given $\varepsilon>0$, we can find $C_\varepsilon>0$ with $\mathbf{P}(\Vert\widehat{\beta} - \beta\Vert_2 > C_\varepsilon/\sqrt{n})<\varepsilon$ for any $n$. From Glivenko - Cantelli theorem and dominated convergence theorem, $\lim_{n\to\infty}\mathbf{P}(\sup_{x\in\mathbf{R}}\vert\widetilde{F}(x) - F(x)\vert > \varepsilon) = 0$. If $\Vert\widehat{\beta} - \beta\Vert_2\leq C_\varepsilon/\sqrt{n}$ and $\sup_{x\in\mathbf{R}}\vert\widetilde{F}(x) - F(x)\vert \leq \varepsilon$, for any given $-\infty < r\leq s < \infty$, $\delta >0$, $-\varepsilon+ F(x-\frac{MC_\varepsilon}{\sqrt{n}})\leq \widehat{F}(x) \leq \varepsilon + F(x+\frac{MC_\varepsilon}{\sqrt{n}})$, and $\sup_{r\leq x\leq y\leq s, y-x<\delta}\widehat{F}(y) - \widehat{F}(x)\leq 2\varepsilon + \sup_{r\leq x\leq y\leq s, y-x<\delta} F(y + \frac{MC_\varepsilon}{\sqrt{n}}) - F(x - \frac{MC_\varepsilon}{\sqrt{n}})$. For any given $-\infty<r\leq s <\infty$ and $\xi>0$, we choose sufficiently small $\varepsilon, \delta>0$ and define $\zeta = 2\varepsilon + \sup_{r\leq x\leq y\leq s, y-x<\delta} F(y + \frac{MC_\varepsilon}{\sqrt{n}}) - F(x - \frac{MC_\varepsilon}{\sqrt{n}})$,
\begin{equation}
\begin{aligned}
\sup_{x,y\in[r,s],\vert x-y\vert<\delta}\vert \widetilde{\alpha}^*(x) - \widetilde{\alpha}^*(y)\vert\leq 2\sup_{x\in \mathbf{R}}\vert \widetilde{\alpha}^*(x)- B(\widehat{F}(x))\vert + \sup_{x,y\in[r,s],\vert x-y\vert<\delta}\vert B(\widehat{F}(x)) - B(\widehat{F}(y))\vert\\
\leq 2\sup_{x\in \mathbf{R}}\vert \widetilde{\alpha}^*(x)- B(\widehat{F}(x))\vert + \sup_{x,y\in[-\varepsilon+F(r-\frac{MC_\varepsilon}{\sqrt{n}}),\varepsilon+F(s+\frac{MC_\varepsilon}{\sqrt{n}})],\vert x - y\vert\leq \zeta}\vert B(x) - B(y)\vert\\
\Rightarrow \mathbf{P}^*\left(\sup_{x,y\in[r,s],\vert x-y\vert<\delta}\vert \widetilde{\alpha}^*(x) - \widetilde{\alpha}^*(y)\vert>\xi\right)\leq \mathbf{P}^*\left(\sup_{x\in \mathbf{R}}\vert \widetilde{\alpha}^*(x)- B(\widehat{F}(x))\vert>\frac{\xi}{4}\right)
+ \mathbf{P}^*\left(\sup_{x,y\in[0,1],\vert x - y\vert\leq \zeta}\vert B(x) - B(y)\vert>\frac{\xi}{2}\right)
\end{aligned}
\end{equation}
For $F$ is uniform continuous, \eqref{ExpBrown} and \eqref{Secc} imply \eqref{Quan_Boot}.
\end{proof}

\begin{proof}[proof of theorem \ref{theoPd}]
According to theorem 13.5 in \cite{billing}, it suffices to show $\forall z_1,...,z_k\in[0,1]$, $(\widetilde{\mathcal{M}}_m(z_1),...,\widetilde{\mathcal{M}}_m(z_k))\to_{\mathcal{L}} (\mathcal{M}_m(z_1),...,\mathcal{M}_m(z_k))$ in $\mathbf{R}^k$; $\mathcal{M}_m(1)-\mathcal{M}_m(1-\delta)\to_{\mathcal{L}}0$ in $\mathbf{R}$ as $\delta\to 0,\delta>0$; and $\exists \beta\geq 0,\alpha>1/2$, and a non-decreasing, continuous function $G$ on $[0,1]$ such that
\begin{equation}
\mathbf{E}\vert \widetilde{\mathcal{M}}_m(t)-\widetilde{\mathcal{M}}_m(s)\vert^{2\beta}\vert\widetilde{\mathcal{M}}_m(s)-\widetilde{\mathcal{M}}_m(r)\vert^{2\beta}\leq (G(t) - G(r))^{2\alpha}\ \forall 1\geq t>s>r\geq 0
\label{Con3}
\end{equation}
The first condition: we define $c^T = (c_1,...,c_n) = x_f^T(X^TX)^{-1}X^T-\frac{1}{n}e^T\Rightarrow c_i = x_f^T(X^TX)^{-1}x_i-\frac{1}{n}$, and $z_j^{'} = 2mz_j - m, j = 1,2,...,k$.
\begin{equation}
\begin{aligned}
\sum_{j=1}^ks_j\widetilde{\mathcal{M}}_m(z_j) = \sum_{i=1}^n\left((\sum_{j=1}^k\sqrt{n}s_jF^{'}(z_j^{'}))c_i\epsilon_i-\frac{1}{\sqrt{n}}(\sum_{j=1}^ks_j(\mathbf{1}_{\epsilon_i\leq z_j^{'}}-F(z_j^{'})))\right)\Rightarrow \mathbf{E}\sum_{j=1}^ks_j\widetilde{\mathcal{M}}_m(z_j) = 0\\
\label{crus}
\end{aligned}
\end{equation}
Form assumption 3. and (5.8.4) in \cite{10.5555/2422911}, we define $Y_i = (\sum_{j=1}^k\sqrt{n}s_jF^{'}(z_j^{'}))c_i\epsilon_i-\frac{1}{\sqrt{n}}(\sum_{j=1}^ks_j(\mathbf{1}_{\epsilon_i\leq z_j^{'}}-F(z_j^{'})))$,
\begin{equation}
\begin{aligned}
\mathbf{E}Y^2_i
= n\sigma^2c_i^2\sum_{j=1}^k\sum_{l=1}^k s_js_lF^{'}(z_j^{'})F^{'}(z_l^{'})
+\frac{1}{n}\sum_{j=1}^k\sum_{l=1}^ks_js_l(F(\min(z_j^{'},z_l^{'}))-F(z_j^{'})F(z_l^{'})) - 2c_i\sum_{j=1}^k\sum_{l=1}^ks_js_lF^{'}(z_j^{'})H(z_l^{'})\\
\Rightarrow \lim_{n\to\infty}\sum_{i=1}^n\mathbf{E}Y^2_i = \lim_{n\to\infty}\sigma^2\sum_{j=1}^k\sum_{l=1}^k s_js_lF^{'}(z_j^{'})F^{'}(z_l^{'})\times \left(x_f^T(\frac{X^TX}{n})^{-1}x_f + 1 - 2x_f^T(\frac{X^TX}{n})^{-1}\overline{x}_n\right)\\
+\sum_{j=1}^k\sum_{l=1}^ks_js_l(F(\min(z_j^{'},z_l^{'}))-F(z_j^{'})F(z_l^{'}))
- \lim_{n\to\infty}2\sum_{j=1}^k\sum_{l=1}^ks_js_lF^{'}(z_j^{'})H(z_l^{'})\times (x_f^T(\frac{X^TX}{n})^{-1}\overline{x}_n-1)\\
=\sigma^2\mathcal{K}(x^T_fA^{-1}x_f + 1 - 2x_f^TA^{-1}b) + \mathcal{N} - 2\mathcal{R}(x^T_fA^{-1}b - 1)
\end{aligned}
\label{Fifty_Eight}
\end{equation}
Here we define $\mathcal{K} = \sum_{j=1}^k\sum_{l=1}^k s_js_lF^{'}(z_j^{'})F^{'}(z_l^{'})$, $\mathcal{N} = \sum_{l=1}^ks_js_l(F(\min(z_j^{'},z_l^{'}))-F(z_j^{'})F(z_l^{'}))$, and $\mathcal{R} = \sum_{j=1}^k\sum_{l=1}^ks_js_lF^{'}(z_j^{'})H(z_l^{'})
$.
From mean value inequality,
\begin{equation}
\begin{aligned}
\sum_{i=1}^n\mathbf{E}\vert Y_i\vert^3
\leq 4k^2\mathbf{E}\vert\epsilon_1\vert^3\sum_{j = 1}^k\vert s_jF^{'}(z_j^{'})\vert^3\times n\sqrt{n}\sum_{i = 1}^n\vert c_i\vert^3 + 4k^2\sum_{j = 1}^k\vert s_j\vert^3\times \frac{1}{n\sqrt{n}}\sum_{i = 1}^n\mathbf{E}\vert \mathbf{1}_{\epsilon_i\leq z_j^{'}}-F(z_j^{'})\vert^3
\end{aligned}
\end{equation}
From assumption 3.,
\begin{equation}
n\sqrt{n}\sum_{i=1}^n\vert c_i\vert^3\leq n\sqrt{n}\max_{i=1,2,...,n}\vert c_i\vert\times \sum_{i=1}^n c_i^2\leq \frac{1+M^2\Vert(X^TX/n)^{-1}\Vert_2}{\sqrt{n}}\times  \left(x_f^T(\frac{X^TX}{n})^{-1}x_f + 1 - 2x_f^T(\frac{X^TX}{n})^{-1}\overline{x}_n\right)
\end{equation}
which has order $O(1/\sqrt{n})$. $\Vert(X^TX/n)^{-1}\Vert_2$ is the matrix 2 norm of $(X^TX/n)^{-1}$. If $\sigma^2 \mathcal{K}\times (x_f^TA^{-1}x_f+1-2x_f^TA^{-1}b) + \mathcal{N} - 2\mathcal{R}(x_f^TA^{-1}b-1)\neq 0$, from Theorem 1.15, Theorem 1.11, and (1.97) in \cite{mathStat},
\begin{equation}
\sum_{j=1}^ks_j\widetilde{\mathcal{M}}_m(z_j) = \frac{\sum_{j=1}^ks_j\widetilde{\mathcal{M}}_m(z_j)}{\sqrt{\sum_{i=1}^n\mathbf{E}Y^2_i}}\times \sqrt{\sum_{i=1}^n\mathbf{E}Y^2_i}\to_{\mathcal{L}} N(0,\sigma^2 \mathcal{K}\times (x_f^TA^{-1}x_f+1-2x_f^TA^{-1}b) + \mathcal{N} - 2\mathcal{R}(x_f^TA^{-1}b-1))
\end{equation}
On the other hand, if $\sigma^2 \mathcal{K}\times (x_f^TA^{-1}x_f+1-2x_f^TA^{-1}b) + \mathcal{N} - 2\mathcal{R}(x_f^TA^{-1}b-1)=0$, then $\forall \delta>0$, from \eqref{Fifty_Eight}, $\lim_{n\to\infty}\mathbf{P}(\vert \sum_{j=1}^ks_j\widetilde{\mathcal{M}}_m(z_j)\vert\geq \delta)\leq \lim_{n\to\infty}\frac{\mathbf{E}\vert \sum_{j=1}^ks_j\widetilde{\mathcal{M}}_m(z_j)\vert^2}{\delta^2} =0\Rightarrow \sum_{j=1}^ks_j\widetilde{\mathcal{M}}_m(z_j)\to_{\mathcal{L}}0$. From theorem 1.9, \cite{mathStat}, we prove the first condition.

The second condition: $\forall \xi>0$,
\begin{equation}
\begin{aligned}
\mathbf{P}\left(\vert \mathcal{M}_m(1)-\mathcal{M}_m(1-\delta)\vert\geq \xi\right)\leq \frac{\mathbf{E}\vert \mathcal{M}_m(1)-\mathcal{M}_m(1-\delta)\vert^2}{\xi^2} \leq \frac{\sigma^2(x_f^TA^{-1}x_f-2x_f^TA^{-1}b+1)(F^{'}(m)-F^{'}(m-2m\delta))^2}{\xi^2}\\
+\frac{2\vert x_f^TA^{-1}b-1\vert\times \vert F^{'}(m)-F^{'}(m-2m\delta)\vert\times\vert H(m)-H(m-2m\delta)\vert}{\xi^2} + \frac{\vert F(m)-F(m-2m\delta)\vert+\vert F(m)-F(m-2m\delta)\vert^2}{\xi^2}
\end{aligned}
\end{equation}
From assumption 1., $\lim_{\delta\to 0,\delta>0}Prob\left(\vert \mathcal{M}_m(1)-\mathcal{M}_m(1-\delta)\vert\geq \xi\right) = 0$, and we prove the second condition.

The third condition: we choose $\beta = \alpha = 1$, and define $t^{'} = 2mt - m, \forall t$. For $\forall t, s\in[0,1]$, we define $\mathcal{A}(t,s) = \sqrt{n}\left(F^{'}(t^{'})-F^{'}(s^{'})\right)\times\left(x_f^T(X^TX)^{-1}X^T\epsilon-\frac{1}{n}\sum_{i=1}^n\epsilon_i\right)$, and $\mathcal{B}(t,s) =   \frac{1}{\sqrt{n}}\sum_{i=1}^n(\mathbf{1}_{s^{'}<\epsilon_i\leq t^{'}} - F(t^{'}) + F(s^{'}))$. From mean value inequality,
\begin{equation}
\begin{aligned}
\mathbf{E}\vert \widetilde{\mathcal{M}}_m(t)-\widetilde{\mathcal{M}}_m(s)\vert^{2}\vert\widetilde{\mathcal{M}}_m(s)-\widetilde{\mathcal{M}}_m(r)\vert^{2}\leq 4\mathbf{E}\left(\mathcal{A}(t,s)^2\mathcal{A}(s,r)^2 + \mathcal{B}(t,s)^2\mathcal{A}(s,r)^2 + \mathcal{A}(t,s)^2\mathcal{B}(s,r)^2 + \mathcal{B}(t,s)^2\mathcal{B}(s,r)^2\right)
\end{aligned}
\end{equation}
From assumption 3), $x_f^T(X^TX/n)^{-1}x_f\to x_f^TA^{-1}x_f$ and $x_f^T(X^TX/n)^{-1}\overline{x}_n\to x_f^TA^{-1}b$. Therefore, $\exists C>0$ such that $\vert x_f^T(\frac{X^TX}{n})^{-1}x_f + 1 - 2x_f^T(\frac{X^TX}{n})^{-1}\overline{x}_n\vert\leq C, \forall n$. Define $c=(c_1,...,c_n)^T$, $c_i = x_f^T(X^TX)^{-1}x_i-1/n$, $\forall 1\geq t>s>r\geq 0$
\begin{equation}
\begin{aligned}
\mathbf{E}\mathcal{A}(t,s)^2\mathcal{A}(s,r)^2 = n^2\left(F^{'}(t^{'})-F^{'}(s^{'})\right)^2\left(F^{'}(s^{'})-F^{'}(r^{'})\right)^2\mathbf{E}(c^T\epsilon)^4\\
= n^2\left(F^{'}(t^{'})-F^{'}(s^{'})\right)^2\left(F^{'}(s^{'})-F^{'}(r^{'})\right)^2
\times \left(\mathbf{E}\epsilon_1^4\times\sum_{i=1}^nc_i^4 + 3\sigma^4\sum_{i=1}^n\sum_{j=1,j\neq i}^nc_i^2c_j^2\right)\\
\leq 16m^4(\mathbf{E}\epsilon_1^4+3\sigma^4)(t-s)^2(s-r)^2\times \left(x_f^T(\frac{X^TX}{n})^{-1}x_f + 1 - 2x_f^T(\frac{X^TX}{n})^{-1}\overline{x}_n\right)^2\times \sup_{x\in\mathbf{R}}\vert F^{''}(x)\vert^4\\
\leq 16C^2m^4(\mathbf{E}\epsilon_1^4+3\sigma^4)(t-r)^2\times \sup_{x\in\mathbf{R}}\vert F^{''}(x)\vert^4
\end{aligned}
\label{con31}
\end{equation}

\begin{equation}
\begin{aligned}
\mathbf{E}\mathcal{B}(t,s)^2\mathcal{B}(s,r)^2\leq \frac{1}{n}\mathbf{E}(\mathbf{1}_{s^{'}<\epsilon_1\leq t^{'}} - F(t^{'}) + F(s^{'}))^2(\mathbf{1}_{r^{'}<\epsilon_1\leq s^{'}} - F(s^{'}) + F(r^{'}))^2\\
+\mathbf{E}(\mathbf{1}_{s^{'}<\epsilon_1\leq t^{'}} - F(t^{'}) + F(s^{'}))^2\times\mathbf{E}(\mathbf{1}_{r^{'}<\epsilon_1\leq s^{'}} - F(s^{'}) + F(r^{'}))^2+2\left(\mathbf{E}(\mathbf{1}_{s^{'}<\epsilon_1\leq t^{'}} - F(t^{'}) + F(s^{'}))(\mathbf{1}_{r^{'}<\epsilon_1\leq s^{'}} - F(s^{'}) + F(r^{'}))\right)^2\\
\leq \frac{3}{n}(F(t^{'})-F(s^{'}))(F(s^{'})-F(r^{'}))
+ (F(t^{'})-F(s^{'}))(F(s^{'})-F(r^{'})) + 2(F(t^{'})-F(s^{'}))^2(F(s^{'})-F(r^{'}))^2\leq 6(F(t^{'})-F(r^{'}))^2
\end{aligned}
\label{con32}
\end{equation}

\begin{equation}
\begin{aligned}
\mathbf{E}\mathcal{A}(t,s)^2\mathcal{B}(s,r)^2 = \left(F^{'}(t^{'})-F^{'}(s^{'})\right)^2\times (\sum_{i=1}^nc_i^2\mathbf{E}\epsilon_i^2(\mathbf{1}_{r^{'}<\epsilon_i\leq s^{'}} - F(s^{'}) + F(r^{'}))^2\\
+\sigma^2(n-1)\sum_{i=1}^n c_i^2\mathbf{E}(\mathbf{1}_{r^{'}<\epsilon_1\leq s^{'}} - F(s^{'}) + F(r^{'}))^2
+ 2 \sum_{i=1}^n\sum_{j=1,j\neq i}^nc_ic_j\times (\mathbf{E}\epsilon_1\mathbf{1}_{r^{'}<\epsilon_1\leq s^{'}})^2
)\\
\leq \sigma^2\left(F^{'}(t^{'})-F^{'}(s^{'})\right)^2\times (n\sum_{i=1}^nc_i^2 + 2((\sum_{i=1}^nc_i)^2+\sum_{i=1}^nc_i^2))\\
\end{aligned}
\label{con33}
\end{equation}
Notice that $n\sum_{i=1}^nc_i^2 = x_f^T(\frac{X^TX}{n})^{-1}x_f + 1 - 2x_f^T(\frac{X^TX}{n})^{-1}\overline{x}_n$ and $\sum_{i=1}^n c_i = x_f^T(X^TX/n)^{-1}\overline{x}_n-1$, \eqref{Con3} is satisfied by choosing $G(x) = C^{'}x$ with a sufficiently large constant $C^{'}$. And we prove \eqref{AsymptoticM}.

We choose $m > s+1$ and define $\widehat{\alpha}(x) = \sqrt{n}(\widehat{F}(x) - F(x))$. $\forall x\in[r,s]$, from Taylor's theorem
\begin{equation}
\begin{aligned}
\mathcal{S}(x)
= \sqrt{n}(F(x + x_f^T(X^TX)^{-1}X^T\epsilon) - F(-x + x_f^T(X^TX)^{-1}X^T\epsilon)) \\
- \sqrt{n}\left(\mathbf{E}^*\left(\widehat{F}(x + x_f^T(X^TX)^{-1}X^T\epsilon^*) - \widehat{F}^-(-x + x_f^T(X^TX)^{-1}X^T\epsilon^*)\right)\right)\\
= \left(F^{'}(x) - F^{'}(-x)\right)\times \sqrt{n}x_f^T(X^TX)^{-1}X^T\epsilon + \frac{F^{''}(\eta_1) - F^{''}(\eta_2)}{2}\times \sqrt{n}(x_f^T(X^TX)^{-1}X^T\epsilon)^2\\
-\mathbf{E}^*\left(\widehat{\alpha}(x+x_f^T(X^TX)^{-1}X^T\epsilon^*) - \widehat{\alpha}(x)\right) +\mathbf{E}^*\left(\widehat{\alpha}^{-}(-x +x_f^T(X^TX)^{-1}X^T\epsilon^*) - \widehat{\alpha}^{-}(-x)\right) \\
- \widehat{\alpha}(x) + \widehat{\alpha}^{-}(-x) - \sqrt{n}\mathbf{E}^*\left(F(x+x_f^T(X^TX)^{-1}X^T\epsilon^*) - F(x)\right)
+\sqrt{n}\mathbf{E}^*(F(-x + x_f^T(X^TX)^{-1}X^T\epsilon^*) - F(-x))\\
\Rightarrow \sup_{x\in[r,s]}\vert\mathcal{S}(x) - \left(\widetilde{\mathcal{M}}_m(\frac{x+m}{2m}) - \widetilde{\mathcal{M}}_m^-(\frac{-x+m}{2m})\right)\vert
\leq \sqrt{n}(x_f^T(X^TX)^{-1}X^T\epsilon)^2\times \sup_{x\in\mathbf{R}}\vert F^{''}(x)\vert\\
+ \sup_{x\in[r,s]}\vert\mathbf{E}^*(\widehat{\alpha}(x+x_f^T(X^TX)^{-1}X^T\epsilon^*) - \widehat{\alpha}(x))\vert
+ \sup_{x\in[r,s]}\vert \mathbf{E}^*(\widehat{\alpha}^-(-x +x_f^T(X^TX)^{-1}X^T\epsilon^*) - \widehat{\alpha}^-(-x))\vert\\
+ \sup_{x\in[r,s]}\vert\widehat{\alpha}(x) - \widetilde{\alpha}(x) - \frac{F^{'}(x)}{\sqrt{n}}\sum_{i=1}^n\epsilon_i\vert
+ \sup_{x\in[r,s]}\vert\widehat{\alpha}^-(-x) - \widetilde{\alpha}^-(-x) - \frac{F^{'}(-x)}{\sqrt{n}}\sum_{i=1}^n\epsilon_i\vert
+ \mathbf{E}^*(x_f^T(X^TX)^{-1}X^T\epsilon^*)^2\times \sup_{x\in\mathbf{R}}\vert F^{''}(x)\vert
\end{aligned}
\label{DI}
\end{equation}
From lemma \ref{lemmaBoot}, for any given $\xi > 0$, $\exists 1/2 >\delta > 0$ such that for sufficiently large $n$, $\mathbf{P}\left(\sup_{x,y\in[-m,m], \vert x - y\vert < \delta}\vert\widehat{\alpha}(x) - \widehat{\alpha}(y)\vert\leq \xi\right)>1-\xi$. If $\sup_{x,y\in[-m,m], \vert x - y\vert < \delta}\vert\widehat{\alpha}(x) - \widehat{\alpha}(y)\vert\leq \xi$, then $\forall x\in[r,s]$,
\begin{equation}
\begin{aligned}
\vert \mathbf{E}^*(\widehat{\alpha}^-(-x +x_f^T(X^TX)^{-1}X^T\epsilon^*) - \widehat{\alpha}^-(-x))\vert,\ \vert\mathbf{E}^*(\widehat{\alpha}(x+x_f^T(X^TX)^{-1}X^T\epsilon^*) - \widehat{\alpha}(x))\vert\\
\leq \sqrt{n}\mathbf{P}^*(\vert x_f^T(X^TX)^{-1}X^T\epsilon^*\vert > \delta) + \xi\leq \xi+ \frac{\widehat{\sigma}^2}{\sqrt{n}\delta^2}x_f^T\left(\frac{X^TX}{n}\right)^{-1}x_f\\
\end{aligned}
\label{continuityAlpha}
\end{equation}
Since $\mathbf{E}^*(x_f^T(X^TX)^{-1}X^T\epsilon^*)^2 = \frac{\widehat{\sigma}^2}{n}\left(\frac{x_f^T(X^TX)^{-1}x_f}{n}\right)^{-1}$ and $\mathbf{E}\widehat{\sigma}^2\leq 4\sigma^2 + \frac{4\sigma^2M^2}{n}\Vert(\frac{X^TX}{n})^{-1}\Vert_2+2\mathbf{E}\widehat{\lambda}^2 = O(1)$, combine with \eqref{Convert} we have $\forall \xi>0$,
\begin{equation}
\mathbf{P}\left(\sup_{x\in[r,s]}\vert\mathcal{S}(x) - \left(\widetilde{\mathcal{M}}_m(\frac{x+m}{2m}) - \widetilde{\mathcal{M}}_m^-(\frac{-x+m}{2m})\right)\vert > \xi\right)\to 0
\label{S_and_Mm}
\end{equation}
Finally, from \eqref{Deltas} and corollary \ref{coroLink}, $\forall \delta>0$,
\begin{equation}
\begin{aligned}
\sup_{x\in[r,s],y\in\mathbf{R}}\vert\mathbf{P}\left(\mathcal{S}(x)\leq y\right) - \Phi\left(\frac{y}{\sqrt{\mathcal{U}(x)}}\right)\vert
\leq \sup_{x\in[r,s]}\mathbf{P}\left(\vert \mathcal{S}(x) - \widetilde{\mathcal{M}}_m(\frac{x+m}{2m}) - \widetilde{\mathcal{M}}_m^-(\frac{-x+m}{2m})\vert>\delta\right)\\
+3\sup_{x\in[r,s],y\in\mathbf{R}}\vert \mathbf{P}\left(\widetilde{\mathcal{M}}_m(\frac{x+m}{2m}) - \widetilde{\mathcal{M}}_m^-(\frac{-x+m}{2m})\leq y\right) -  \Phi\left(\frac{y}{\sqrt{\mathcal{U}(x)}}\right)\vert + \sup_{x\in[r,s],y\in\mathbf{R}}\left(\Phi\left(\frac{y+\delta}{\sqrt{\mathcal{U}(x)}}\right) - \Phi\left(\frac{y-\delta}{\sqrt{\mathcal{U}(x)}}\right)\right)
\end{aligned}
\end{equation}
From assumption 4., we prove \eqref{KeyThm}.
\end{proof}

\section{Proofs of theorems in section \ref{chp3}}
\label{appendixB}
The Wasserstein distance can be used to quantify the difference between two probability distributions. We refer chapter 6, \cite{optimalTransport} for a detail introduction. Lemma \ref{Wassiter} bounds the Wasserstein distance between the distribution $T(x) = \frac{1}{n}\sum_{i = 1}^n\mathbf{1}_{\epsilon_i -\overline{\epsilon}\leq x},x\in\mathbf{R}$ and $F$. Here $\overline{\epsilon} = \frac{1}{n}\sum_{i = 1}^n\epsilon_i$.
\begin{lemma}
\label{Wassiter}
Suppose assumption 1. and 2., then
\begin{equation}
\lim_{n\to\infty}\inf_{X,Y}\mathbf{E}^*\vert X - Y\vert^2 = 0 \ \text{almost surely}
\label{Wissterm}
\end{equation}
The infimum is taken over all random variables $(X,Y)\in\mathbf{R}^2$ such that $\mathbf{P}^*(X\leq x) = T(x)$, and $\mathbf{P}^*(Y\leq x) = F(x)$.
\end{lemma}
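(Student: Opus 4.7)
The plan is to exploit the one-dimensional structure of optimal transport: on $\mathbf{R}$, the infimum over all couplings of two distributions with finite second moments is achieved by the comonotone (quantile) coupling, and the resulting squared distance is the Wasserstein-2 distance. Hence I would first observe that
\begin{equation}
\inf_{X,Y}\mathbf{E}^*\vert X - Y\vert^2 = \int_0^1 \bigl(T^{-1}(u) - F^{-1}(u)\bigr)^2\,du =: W_2^2(T,F),
\end{equation}
where the infimum is realized by $X = T^{-1}(U)$, $Y = F^{-1}(U)$ with $U$ uniform on $[0,1]$; see chapter 6 of \cite{optimalTransport}. The lemma therefore reduces to showing $W_2(T,F)\to 0$ almost surely.

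The next step would be to invoke the classical equivalence (again standard from \cite{optimalTransport}): for probability measures on $\mathbf{R}$ with finite second moment, $W_2(T,F)\to 0$ if and only if (i) $T\to F$ weakly, and (ii) $\int x^2\,dT\to \int x^2\,dF$. Both are routine consequences of assumption 1. For (i), write $T(x) = \widetilde{F}(x+\overline{\epsilon})$, where $\widetilde{F}$ is the (uncentered) empirical distribution of $\epsilon_1,\ldots,\epsilon_n$ and $\overline{\epsilon} = \frac{1}{n}\sum_{i=1}^n\epsilon_i$. Glivenko--Cantelli gives $\sup_x\vert\widetilde{F}(x)-F(x)\vert\to 0$ a.s., SLLN gives $\overline{\epsilon}\to 0$ a.s., and the uniform continuity of $F$ (a consequence of $\sup_x\vert F^{''}(x)\vert<\infty$) then yields $\sup_x\vert T(x)-F(x)\vert\to 0$ a.s., which is more than weak convergence. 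For (ii), note
\begin{equation}
\int x^2\,dT = \frac{1}{n}\sum_{i=1}^n(\epsilon_i - \overline{\epsilon})^2 = \frac{1}{n}\sum_{i=1}^n \epsilon_i^2 - \overline{\epsilon}^2,
\end{equation}
which converges a.s. to $\sigma^2 = \int x^2\,dF$ by SLLN (using $\mathbf{E}\epsilon_1^2<\infty$, which follows from $\mathbf{E}\vert\epsilon_1\vert^4<\infty$).

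If one prefers a self-contained bound rather than quoting the equivalence, an alternative is to split the quantile integral at $u\in[\delta,1-\delta]$ and outside. On the interior the uniform convergence of $T$ to $F$ together with $F'>0$ (assumption 4) gives uniform convergence of quantiles, hence $\int_\delta^{1-\delta}(T^{-1}(u)-F^{-1}(u))^2\,du\to 0$ a.s.; on the tails the contribution is controlled by $\int_{\vert x\vert>M}x^2\,dT + \int_{\vert x\vert>M}x^2\,dF$, which is small uniformly in $n$ thanks to the convergence of second moments established in (ii) (a uniform-integrability argument). Under either route, the main subtlety—and really the only non-bookkeeping step—is upgrading the weak/uniform convergence of $T$ to $F$ into Wasserstein-2 convergence, and this is exactly where the second-moment convergence is needed to rule out mass escaping to infinity. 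Once that tail control is in place, the a.s. conclusion in \eqref{Wissterm} follows.
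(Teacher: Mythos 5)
Your proposal is correct and follows essentially the same route as the paper's proof: reduce the infimum to the one-dimensional Wasserstein-2 distance, then verify weak convergence (via Glivenko--Cantelli, SLLN, and uniform continuity of $F$) and convergence of second moments (via SLLN), and finally invoke the standard characterization of $W_2$ convergence (the paper cites Theorem 6.9 of \cite{optimalTransport}, exactly the equivalence you quote). The only substantive difference is that you also sketch a self-contained tail-splitting alternative, but note that version invokes assumption 4 ($F'>0$), which the lemma does not assume; the main route you give avoids this.
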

\begin{proof}
From assumption 1), Gilvenko-Cantelli theorem, and the strong law of large number(e.g., theorem 1.13 in \cite{mathStat}),
\begin{equation}
\lim_{n\to\infty}\sup_{x\in\mathbf{R}}\vert T(x) - F(x)\vert\leq \lim_{n\to\infty}\sup_{x\in\mathbf{R}}\vert\frac{1}{n}\sum_{i = 1}^n\mathbf{1}_{\epsilon_i\leq x} - F(x)\vert + \lim_{n\to\infty}\sup_{x\in\mathbf{R}}\vert F(x + \overline{\epsilon}) - F(x)\vert = 0\ \text{almost surely}
\label{almostSure}
\end{equation}
From the strong law of large number, $\lim_{n\to\infty}\int_{\mathbf{R}} x^2dT = \lim_{n\to\infty}\frac{1}{n}\sum_{i = 1}^n\epsilon^2_i - \lim_{n\to\infty}\overline{\epsilon}^2 =\sigma^2$ almost surely. Choose $x_0 = 0$ in definition 6.8, \cite{optimalTransport}; from proposition 5.7, page 112 in \cite{Stochastic} and theorem 6.9, \cite{optimalTransport}, we prove \eqref{Wissterm}.
\end{proof}
Lemma \ref{thmBOOT} ensures that $\widehat{\mathcal{S}}$ has the same asymptotic distribution as $\mathcal{S}$.
\begin{lemma}
Suppose assumption 1. to 4. hold true. Then for any given $0<r<s<\infty, \xi > 0$,
\begin{equation}
\lim_{n\to\infty} \mathbf{P}\left(\sup_{x\in[r,s]}\sup_{y\in\mathbf{R}}\vert \mathbf{P}^*\left(\widehat{S}(x)\leq y\right) - \Phi\left(\frac{y}{\sqrt{\mathcal{U}(x)}}\right)\vert>\xi\right) = 0
\label{resBoot}
\end{equation}
\label{thmBOOT}
\end{lemma}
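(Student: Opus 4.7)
The plan is to replicate the proof of Theorem \ref{theoPd} inside the bootstrap world, working on an event of $\mathbf{P}$-probability approaching one on which the empirical quantities stay close to their population counterparts. Under $\mathbf{P}^*$ the $e^*_i$ are i.i.d.\ with law $\widehat{F}$, so $e^*_i$ plays the role of $\epsilon_i$ and $\widehat{F}$ plays the role of $F$. First, I would split $\widehat{\mathcal{S}}(x)$ by adding and subtracting $\widehat{F}(x)$ and $\widehat{F}^-(-x)$ to obtain
\begin{equation*}
\widehat{\mathcal{S}}(x) = \sqrt{n}\bigl[\widehat{F}(x+\widehat{\Delta})-\widehat{F}(x) - \widehat{F}^-(-x+\widehat{\Delta})+\widehat{F}^-(-x)\bigr] - \frac{1}{\sqrt{n}}\sum_{i=1}^n\bigl[(\mathbf{1}_{e^*_i\le x}-\widehat{F}(x))-(\mathbf{1}_{e^*_i<-x}-\widehat{F}^-(-x))\bigr],
\end{equation*}
where $\widehat{\Delta} = x_f^T(X^TX)^{-1}X^T e^* - \frac{1}{n}\sum_i e^*_i$ satisfies $\mathbf{E}^*\widehat{\Delta}=0$ and $\mathbf{E}^*\widehat{\Delta}^2=O(1/n)$, so $\widehat{\Delta}=O_{\mathbf{P}^*}(1/\sqrt{n})$.

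Next, I would linearize the two $\widehat{F}$-increments. Writing $\widehat{F}(x+\widehat{\Delta})-\widehat{F}(x) = [F(x+\widehat{\Delta})-F(x)] + [(\widehat{F}-F)(x+\widehat{\Delta})-(\widehat{F}-F)(x)]$, the smooth piece equals $F'(x)\widehat{\Delta}$ up to an $O(\widehat{\Delta}^2)$ term by assumption 1, while the empirical-fluctuation piece is $o_{\mathbf{P}^*}(1/\sqrt{n})$ uniformly in $x\in[r,s]$ by the bootstrap equicontinuity \eqref{Quan_Boot} combined with $\sup_x|\widehat{F}-F|\to 0$ (consequence of Lemma \ref{Wassiter} together with Glivenko--Cantelli). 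Handling the $-x$ bracket identically reduces $\widehat{\mathcal{S}}(x)$ to the i.i.d.\ bootstrap sum $\frac{1}{\sqrt{n}}\sum_{i=1}^n \xi_i^*(x) + o_{\mathbf{P}^*}(1)$, where
\begin{equation*}
\xi_i^*(x) = \bigl(F'(x)-F'(-x)\bigr)\, nc_i\, e^*_i - \bigl(\mathbf{1}_{e^*_i\le x}-\widehat{F}(x)\bigr) + \bigl(\mathbf{1}_{e^*_i<-x}-\widehat{F}^-(-x)\bigr),\quad c_i = x_f^T(X^TX)^{-1}x_i-\tfrac{1}{n},
\end{equation*}
uniformly on $[r,s]$. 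A conditional Lindeberg CLT then applies: a direct computation shows the $\mathbf{P}^*$-variance of this sum is expression \eqref{VarFun} with $F,H,\sigma^2,A,b$ replaced by the empirical analogs $\widehat{F},\widehat{H}(x)=\tfrac{1}{n}\sum_i\widehat{\epsilon}_i\mathbf{1}_{\widehat{\epsilon}_i\le x},\widehat{\sigma}^2,X^TX/n,\overline{x}_n$, each of which converges in $\mathbf{P}$-probability to its population counterpart by Glivenko--Cantelli, Lemma \ref{Wassiter}, the strong law, and assumption 3; and the Lindeberg condition follows from the uniform integrability of $(e^*_i)^2$ supplied by Lemma \ref{Wassiter}. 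Thus the bootstrap variance converges to $\mathcal{U}(x)$ in $\mathbf{P}$-probability, uniformly on $[r,s]$, and $\widehat{\mathcal{S}}(x)\to_{\mathcal{L}^*} N(0,\mathcal{U}(x))$ pointwise in $x$.

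Finally, to upgrade pointwise-in-$x$ bootstrap normality to uniform convergence over $(x,y)\in[r,s]\times\mathbf{R}$, I would combine the Berry--Esseen-style rate from the Lindeberg CLT on a fine grid $\{x_k\}\subset[r,s]$, the bootstrap equicontinuity of $\widehat{\mathcal{S}}$ in $x$ (another application of \eqref{Quan_Boot}), the continuity and strict positivity of $\mathcal{U}$ (assumption 4), and the smoothness of $\Phi$, following exactly the grid-interpolation argument at the end of the proof of Corollary \ref{coroLink}. The main obstacle is the linearization in step two: since $\widehat{F}$ is a step function and $\widehat{\Delta}$ is random, making the Taylor-type expansion rigorous \emph{uniformly} in $x\in[r,s]$ requires carefully combining the bootstrap empirical-process equicontinuity \eqref{Quan_Boot} with a $\mathbf{P}$-probability rate for $\sup_x|\widehat{F}-F|$, an issue that did not arise in the proof of Theorem \ref{theoPd} because there $F$ was globally smooth.
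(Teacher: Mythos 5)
Your approach is correct in outline, but it takes a genuinely different route from the paper's. Both proofs share the initial linearization step: $\widehat{\mathcal{M}}(x)$ is approximated by $\sqrt{n}F'(x)\bigl(x_f^T(X^TX)^{-1}X^Te^*-\tfrac{1}{n}\sum_ie^*_i\bigr)-\tfrac{1}{\sqrt{n}}\sum_i\bigl(\mathbf{1}_{e^*_i\le x}-\widehat{F}(x)\bigr)$ (the paper's $\widetilde{M}^*_m$), with the remainder controlled via the equicontinuity of the residual empirical process. One small slip: the fluctuation remainder $\sqrt{n}\bigl[(\widehat{F}-F)(x+\widehat{\Delta})-(\widehat{F}-F)(x)\bigr]=\widehat{\alpha}(x+\widehat{\Delta})-\widehat{\alpha}(x)$ is controlled by \eqref{Half} (equicontinuity of $\widehat{\alpha}$, the process based on the actual centered residuals), not \eqref{Quan_Boot} (which concerns the bootstrap process $\widetilde{\alpha}^*$ and is the tool you want later for equicontinuity of $\widehat{\mathcal{S}}$ in the grid step). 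Where you and the paper truly diverge is in establishing the conditional Gaussian limit of the linearized process. The paper performs two Wasserstein couplings --- first between $\widehat{\epsilon}_i$ and $\epsilon_i-\overline{\epsilon}$, then between the centered-error empirical law and $F$ via Lemma \ref{Wassiter} --- and transfers the already-proven weak convergence of Theorem \ref{theoPd} and Corollary \ref{coroLink} across these couplings. You instead re-derive the CLT from scratch inside the bootstrap world via a conditional Lindeberg argument, then show the bootstrap variance converges in $\mathbf{P}$-probability to $\mathcal{U}(x)$. Both are legitimate. The coupling route buys you direct reuse of Theorem \ref{theoPd} at the cost of somewhat delicate $\mathbf{P}^*$-probability bounds on differences of coupled processes; your direct-CLT route is more self-contained and conceptually closer to a textbook bootstrap CLT, but it requires redoing the variance computation with the empirical analogues $\widehat{F},\widehat{H},\widehat{\sigma}^2,X^TX/n,\overline{x}_n$, verifying each converges (the step $\widehat{H}\to H$ in particular needs care, as it rests on uniform integrability of the squared residuals), checking the conditional Lindeberg condition, and then supplying the Berry--Esseen-type uniformity in $x$ via the grid argument --- work that the paper's coupling route essentially inherits for free from Corollary \ref{coroLink} once the couplings are established.
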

We define $d_{1-\gamma} = \Phi^{-1}(1-\gamma)$(i.e., $\Phi(d_{1-\gamma}) = 1-\gamma$). For any given $0<r<s<\infty, \xi > 0$, lemma \ref{thmBOOT} implies with probability tending to $1$, $\forall 2\xi<1 - \gamma < 1 - \xi, r\leq x\leq s$,
\begin{equation}
\begin{aligned}
\mathbf{P}^*\left(\widehat{S}(x)\leq \sqrt{\mathcal{U}(x)}d_{1-\gamma-2\xi}\right) - (1-\gamma-2\xi)\leq \xi\Rightarrow d^*_{1-\gamma}(x)\geq \sqrt{\mathcal{U}(x)}d_{1-\gamma-2\xi}\\
\mathbf{P}^*\left(\widehat{S}(x)\leq \sqrt{\mathcal{U}(x)}d_{1-\gamma+\xi}\right) - (1-\gamma+\xi)\geq -\xi\Rightarrow d^*_{1-\gamma}(x)\leq \sqrt{\mathcal{U}(x)}d_{1-\gamma+\xi}
\end{aligned}
\label{gapD}
\end{equation}

\begin{proof}[Proof of lemma \ref{thmBOOT}]
From lemma \ref{Wassiter}, for almost sure $y$, $\forall 1/4>\delta>0$, $\exists N>0$ such that $\forall n\geq N$, we can find a random variable $(e^\star_1,e^\dagger_1)\in\mathbf{R}^2$, $\mathbf{P}^*(e^\star_1\leq x) = T(x)$(defined in lemma \ref{Wassiter}), $\mathbf{P}^*(e_1^\dagger\leq x) = F(x)$, and $\mathbf{E}^*(e_1^\star - e_1^\dagger)^2<\delta^9$. We generate $n$ independent observations $(e^\star_i,e^\dagger_i), i = 1,2,...,n$ having the same distribution as $(e^\star_1,e^\dagger_1)$, and define $e^\star = (e^\star_1, ..., e^\star_n)^T$, $e^\dagger = (e^\dagger_1, ..., e^\dagger_n)^T$. We choose an integer $m>s+1$, and define
\begin{equation}
\begin{aligned}
\widetilde{M}_m^\star(x) = \sqrt{n}\left(F^{'}(x^{'})\left(x_f^T(X^TX)^{-1}X^Te^\star - \frac{1}{n}\sum_{i=1}^ne^\star_i\right)-\frac{1}{n}\sum_{i=1}^n(\mathbf{1}_{e^\star_i\leq x^{'}} - T(x^{'}))\right)\\ \widetilde{M}_m^\dagger(x) = \sqrt{n}\left(F^{'}(x^{'})\left(x_f^T(X^TX)^{-1}X^Te^\dagger - \frac{1}{n}\sum_{i=1}^ne^\dagger_i\right)-\frac{1}{n}\sum_{i=1}^n(\mathbf{1}_{e^\dagger_i\leq x^{'}} - F(x^{'}))\right), x\in[0,1]\ \text{and } x^{'} = 2mx - m
\end{aligned}
\end{equation}
For any given $1/4 > \delta^{'} > 0, x\in(\frac{1}{2}, 1]$,
\begin{equation}
\begin{aligned}
\mathbf{P}^*\left(\vert(\widetilde{M}_m^\star(x) - \widetilde{M}_m^{\star-}(1-x)) - (\widetilde{M}_m^\dagger(x) - \widetilde{M}_m^{\dagger-}(1-x))\vert > 3\delta^{'}\right)\\
\leq \mathbf{P}^*\left(\sqrt{n}\vert F^{'}(x^{'}) - F^{'}(-x^{'})\vert\times\vert x_f^T(X^TX)^{-1}X^T(e^\star - e^\dagger)\vert> \delta^{'}\right)
+\mathbf{P}^*\left(\vert F^{'}(x^{'}) - F^{'}(-x^{'})\vert\times \vert \frac{1}{\sqrt{n}}\sum_{i=1}^n(e_i^\star - e_i^\dagger)\vert>\delta^{'}\right)\\
+ \mathbf{P}^*\left(\frac{1}{\sqrt{n}}\vert\sum_{i=1}^n\mathbf{1}_{e_i^\star\leq x^{'}} -\mathbf{1}_{e_i^\star< -x^{'}}-T(x^{'})+T^{-}(-x^{'})- \mathbf{1}_{e^\dagger_i\leq x^{'}} + \mathbf{1}_{e^\dagger_i<- x^{'}}+F(x^{'}) - F(-x^{'})\vert > \delta^{'}\right)\\
\leq\frac{(F^{'}(x^{'}) - F^{'}(-x^{'}))^2\mathbf{E}^*(e_1^\star - e_1^\dagger)^2}{\delta^{'2}}\times\left(x_f^T\left(\frac{X^TX}{n}\right)^{-1}x_f+1\right)+
\frac{2}{\delta^{'2}}\mathbf{E}^{*}(\mathbf{1}_{e^\star_1\leq x^{'}} - T(x^{'}) - \mathbf{1}_{e^\dagger_1\leq x^{'}} +F(x^{'}))^2\\
+\frac{2}{\delta^{'2}}\mathbf{E}^*(\mathbf{1}_{e^\star_1<-x^{'}} - T^-(-x^{'}) - \mathbf{1}_{e^\dagger_1<-x^{'}} +F(-x^{'}))^2
\end{aligned}
\end{equation}
$\mathbf{E}^{*}(\mathbf{1}_{e^\star_1\leq x^{'}} - T(x^{'}) - \mathbf{1}_{e^\dagger_1\leq x^{'}} +F(x^{'}))^2\leq 2\mathbf{E}^*(\mathbf{1}_{e^\star_1\leq x^{'}} - \mathbf{1}_{e^\dagger_1\leq x^{'}})^2 + 2\sup_{x\in\mathbf{R}}\vert T(x) - F(x)\vert^2$; from \eqref{Deltas}, $\mathbf{E}^*\vert\mathbf{1}_{e^\star_1\leq x^{'}} - \mathbf{1}_{e^\dagger_1\leq x^{'}}\vert\leq \mathbf{P}^*(\vert e^\star_1 - e^\dagger_1\vert>\delta^{'}) + F(x^{'} + \delta^{'}) - F(x^{'} -\delta^{'})\leq \frac{\delta^9}{\delta^{'2}} + \sup_{x\in\mathbf{R}}(F(x) - F(x-2\delta{'}))$. From dominated convergence theorem, $\mathbf{E}^*(\mathbf{1}_{e^\star_1<-x^{'}} - T^-(-x^{'}) - \mathbf{1}_{e^\dagger_1<-x^{'}} +F(-x^{'}))^2 = \lim_{h\to\infty}\mathbf{E}^*(\mathbf{1}_{e^\star_1\leq-x^{'}-\frac{1}{h}} - T(-x^{'}-\frac{1}{h}) - \mathbf{1}_{e^\dagger_1\leq -x^{'}-\frac{1}{h}} +F(-x^{'}-\frac{1}{h}))^2\leq \frac{2\delta^9}{\delta^{'2}} + 2\sup_{x\in\mathbf{R}}(F(x) - F(x-2\delta{'})) +2\sup_{x\in\mathbf{R}}\vert T(x) - F(x)\vert^2$. Therefore, from \eqref{Deltas}, assumption 4), theorem \ref{theoPd}, corollary \ref{coroLink},
\begin{equation}
\begin{aligned}
\sup_{x\in[\frac{r+m}{2m}, \frac{s+m}{2m}], y \in\mathbf{R}}\vert\mathbf{P}^*\left(\widetilde{M}_m^\star(x) - \widetilde{M}_m^{\star-}(1-x)\leq y\right) - \Phi\left(\frac{y}{\sqrt{\mathcal{U}(x^{'})}}\right)\vert\\
\leq \sup_{x\in[\frac{r+m}{2m}, \frac{s+m}{2m}], y \in\mathbf{R}}\mathbf{P}^*\left(\vert(\widetilde{M}_m^\star(x) - \widetilde{M}_m^{\star-}(1-x)) - (\widetilde{M}_m^\dagger(x) - \widetilde{M}_m^{\dagger-}(1-x))\vert > 3\delta^{'}\right)\\
+3\sup_{x\in[\frac{r+m}{2m}, \frac{s+m}{2m}], y \in\mathbf{R}}\vert\mathbf{P}^*\left(\widetilde{M}_m^\dagger(x) - \widetilde{M}_m^{\dagger-}(1-x)\leq y\right)- \Phi\left(\frac{y}{\sqrt{\mathcal{U}(x^{'})}}\right)\vert+ \sup_{x\in[\frac{r+m}{2m}, \frac{s+m}{2m}], y \in\mathbf{R}}\left(\Phi\left(\frac{y+3\delta^{'}}{\sqrt{\mathcal{U}(x^{'})}}\right) - \Phi\left(\frac{y-3\delta^{'}}{\sqrt{\mathcal{U}(x^{'})}}\right)\right)\\
\Rightarrow \lim_{n\to\infty}\sup_{x\in[\frac{r+m}{2m}, \frac{s+m}{2m}], y \in\mathbf{R}}\vert\mathbf{P}^*\left(\widetilde{M}_m^\star(x) - \widetilde{M}_m^{\star-}(1-x)\leq y\right) - \Phi\left(\frac{y}{\sqrt{\mathcal{U}(x^{'})}}\right)\vert = 0\ \text{almost surely}
\end{aligned}
\label{SecHalf}
\end{equation}
We define a random variable $(e^*_1,e^\star_1)\in\mathbf{R}^2$ which has probability mass $1/n$ on $(\widehat{\epsilon}_i, \epsilon_i - \overline{\epsilon}), i = 1,2,...,n$. We generate independent random variables $(e^*_i,e^\star_i), i = 1,2,...,n$ having the same distribution as $(e^*_1,e^\star_1)$, and define $e^* = (e_1^*,...,e^*_n)^T, e^\star = (e^\star_1,...,e^\star_n)^T$. We define the stochastic process $\widetilde{M}_m^*(x) = \sqrt{n}\left(F^{'}(x^{'})\left(x_f^T(X^TX)^{-1}X^Te^* - \frac{1}{n}\sum_{i=1}^ne^*_i\right)-\frac{1}{n}\sum_{i=1}^n(\mathbf{1}_{e^*_i\leq x^{'}} - \widehat{F}(x^{'})\right)$.
\begin{equation}
\begin{aligned}
\mathbf{P}^*\left(\vert \widetilde{M}_m^*(x) - \widetilde{M}_m^{*-}(1 - x) - \widetilde{M}_m^{\star}(x) + \widetilde{M}^{\star-}_m(1-x)\vert > 3\delta^{'}\right)\\
\leq \frac{\vert F^{'}(x^{'}) - F^{'}(-x^{'})\vert^2\mathbf{E}^*(e^*_1 - e_1^\star)^2}{\delta^{'2}}\times\left(x_f^T\left(\frac{X^TX}{n}\right)^{-1}x_f + 1\right)+ \frac{2}{\delta^{'2}}\mathbf{E}^{*}(\mathbf{1}_{e^*_1\leq x^{'}} - \widehat{F}(x^{'}) - \mathbf{1}_{e^\star_1\leq x^{'}} +T(x^{'}))^2\\
+\frac{2}{\delta^{'2}}\mathbf{E}^*(\mathbf{1}_{e^*_1<-x^{'}} - \widehat{F}^-(-x^{'}) - \mathbf{1}_{e^\star_1<-x^{'}} +T^-(-x^{'}))^2
\end{aligned}
\end{equation}
For
\begin{equation}
\begin{aligned}
\mathbf{E}^*(e^*_1 - e^\star_1)^2 = \frac{1}{n}\sum_{i= 1}^n(\widehat{\epsilon}_i - \epsilon_i + \overline{\epsilon})^2 = \frac{1}{n}\sum_{i= 1}^n\left((x_i - \overline{x}_n)^T(\widehat{\beta} - \beta)\right)^2,\ \mathbf{E}\left((x_i - \overline{x}_n)^T(\widehat{\beta} - \beta)\right)^2 = \sigma^2(x_i - \overline{x}_n)^T(X^TX)^{-1}(x_i - \overline{x}_n)
\end{aligned}
\end{equation}
Assumption 3. implies $\mathbf{E}^*(e^*_1 - e^\star_1)^2 = O_p(1/n)$. From assumption 3. and Cauchy inequality, $\widehat{F}(x) = \frac{1}{n}\sum_{i=1}^n\mathbf{1}_{e_i-\overline{e}\leq x + (x_i - \overline{x}_n)^T(\widehat{\beta} -\beta)}\leq T(x + 2M\Vert\widehat{\beta} - \beta\Vert_2)$, and $\widehat{F}(x)\geq T(x - 2M\Vert\widehat{\beta} - \beta\Vert_2)$.
\begin{equation}
\begin{aligned}
\sup_{x\in\mathbf{R}}\vert\widehat{F}(x) - T(x)\vert\leq \sup_{x\in\mathbf{R}}\vert T(x + 2M\Vert\widehat{\beta} - \beta\Vert_2) - T(x)\vert + \sup_{x\in\mathbf{R}}\vert T(x - 2M\Vert\widehat{\beta} - \beta\Vert_2) - T(x)\vert\\
\leq 4\sup_{x\in\mathbf{R}}\vert F(x) - T(x)\vert + 2\sup_{x\in\mathbf{R}}\vert F(x + 2M\Vert\widehat{\beta} - \beta\Vert_2) - F(x)\vert + 2\sup_{x\in\mathbf{R}}\vert F(x - 2M\Vert\widehat{\beta} - \beta\Vert_2) - F(x)\vert
\end{aligned}
\label{Ts}
\end{equation}
Since $\mathbf{E}^{*}(\mathbf{1}_{e^*_1\leq x^{'}} - \widehat{F}(x^{'}) - \mathbf{1}_{e^\star_1\leq x^{'}} +T(x^{'}))^2\leq \frac{2\mathbf{E}^*(e_1^*-e^\star_1)^2}{\delta^{'2}}+ 2\sup_{x\in\mathbf{R}}\left(T(x+\delta^{'}) - T(x -\delta^{'})\right)+2\sup_{x\in\mathbf{R}}\vert\widehat{F}(x) - T(x)\vert^2$; dominated convergence theorem implies $\mathbf{E}^*(\mathbf{1}_{e^*_1<-x^{'}} - \widehat{F}^-(-x^{'}) - \mathbf{1}_{e^\star_1<-x^{'}} +T^-(-x^{'}))^2 = \lim_{h\to\infty}\mathbf{E}^*(\mathbf{1}_{e^*_1\leq -x^{'}-\frac{1}{h}} - \widehat{F}(-x^{'}-\frac{1}{h}) - \mathbf{1}_{e^\star_1\leq -x^{'}-\frac{1}{h}} +T(-x^{'}-\frac{1}{h}))^2\leq \frac{2\mathbf{E}^*(e_1^*-e^\star_1)^2}{\delta^{'2}}+ 2\sup_{x\in\mathbf{R}}\left(T(x+\delta^{'}) - T(x -\delta^{'})\right)+2\sup_{x\in\mathbf{R}}\vert\widehat{F}(x) - T(x)\vert^2$; and
\begin{equation}
\begin{aligned}
\sup_{x\in[\frac{r+m}{2m}, \frac{s+m}{2m}], y \in\mathbf{R}}\vert\mathbf{P}^*\left(\widetilde{M}_m^*(x) - \widetilde{M}_m^{*-}(1 - x)\leq y\right) - \Phi\left(\frac{y}{\sqrt{\mathcal{U}(x^{'})}}\right)\vert\\
\leq \sup_{x\in[\frac{r+m}{2m}, \frac{s+m}{2m}], y \in\mathbf{R}}\mathbf{P}^*\left(\vert(\widetilde{M}_m^*(x) - \widetilde{M}_m^{*-}(1-x)) - (\widetilde{M}_m^\star(x) - \widetilde{M}_m^{\star-}(1-x)\vert > 3\delta^{'}\right)\\
+3\sup_{x\in[\frac{r+m}{2m}, \frac{s+m}{2m}], y \in\mathbf{R}}\vert\mathbf{P}^*\left(\widetilde{M}_m^\star(x) - \widetilde{M}_m^{\star-}(1-x)\leq y\right)- \Phi\left(\frac{y}{\sqrt{\mathcal{U}(x^{'})}}\right)\vert+\sup_{x\in[\frac{r+m}{2m}, \frac{s+m}{2m}], y \in\mathbf{R}}\left(\Phi\left(\frac{y+3\delta^{'}}{\sqrt{\mathcal{U}(x^{'})}}\right) - \Phi\left(\frac{y-3\delta^{'}}{\sqrt{\mathcal{U}(x^{'})}}\right)\right)
\end{aligned}
\label{SecIntermediate}
\end{equation}
\eqref{almostSure}, \eqref{SecHalf}, and \eqref{Ts} imply for $\forall \xi > 0$, $\lim_{n\to\infty}\mathbf{P}\left(\sup_{x\in[\frac{r+m}{2m}, \frac{s+m}{2m}], y \in\mathbf{R}}\vert\mathbf{P}^*\left(\widetilde{M}_m^*(x) - \widetilde{M}_m^{*-}(1 - x)\leq y\right) - \Phi\left(\frac{y}{\sqrt{\mathcal{U}(x^{'})}}\right)\vert>\xi\right) = 0$. Finally, we adopt the notations in lemma \ref{lemmaBoot},
\begin{equation}
\begin{aligned}
\sup_{x\in[0,1]}\vert\widehat{\mathcal{M}}(x^{'}) - \widetilde{M}_m^*(x)\vert\leq \frac{\sqrt{n}\sup_{x\in\mathbf{R}}\vert F^{''}(x)\vert}{2}\times \left(x_f^T(X^TX)^{-1}X^Te^* - \frac{1}{n}\sum_{i=1}^ne^*_i\right)^2 \\
+ \sup_{x\in[0,1]}\vert\widehat{\alpha}(x^{'}+x_f^T(X^TX)^{-1}X^Te^* - \frac{1}{n}\sum_{i=1}^ne^*_i) - \widehat{\alpha}(x^{'})\vert
\end{aligned}
\end{equation}
and
\begin{equation}
\begin{aligned}
\sup_{x\in[r,s]}\vert\widehat{\mathcal{S}}(x) - \left(\widetilde{M}_m^*(\frac{x+m}{2m}) - \widetilde{M}_m^{*-}(\frac{-x+m}{2m})\right)\vert\leq \sup_{x\in[r,s]}\vert\widehat{\mathcal{M}}(x) - \widetilde{M}_m^*(\frac{x+m}{2m})\vert\\
+\sup_{x\in[r,s]}\lim_{h\to\infty}\vert\widehat{\mathcal{M}}(-x-\frac{1}{h}) - \widetilde{M}_m^*(\frac{-x+m}{2m}-\frac{1}{2hm})\vert\leq 2\sup_{x\in[0,1]}\vert\widehat{\mathcal{M}}(x^{'}) - \widetilde{M}_m^*(x)\vert
\end{aligned}
\end{equation}
$\forall \delta^{'}>0$, \eqref{Deltas} implies
\begin{equation}
\begin{aligned}
\sup_{x\in[r,s], y \in\mathbf{R}}\vert \mathbf{P}^*\left(\widehat{S}(x)\leq y\right) - \Phi\left(\frac{y}{\sqrt{\mathcal{U}(x)}}\right)\vert
\leq \mathbf{P}^*\left(\sup_{x\in[0,1]}\vert\widehat{\mathcal{M}}(x^{'}) - \widetilde{M}_m^*(x)\vert>\delta^{'}\right)+ \sup_{x\in[r,s], y \in\mathbf{R}}\left(\Phi\left(\frac{y+2\delta^{'}}{\sqrt{\mathcal{U}(x)}}\right) - \Phi\left(\frac{y-2\delta^{'}}{\sqrt{\mathcal{U}(x)}}\right)\right)\\
+ 3\sup_{x\in[\frac{r+m}{2m}, \frac{s+m}{2m}], y \in\mathbf{R}}\vert\mathbf{P}^*\left(\widetilde{M}_m^*(x) - \widetilde{M}_m^{*-}(1 - x)\leq y\right) - \Phi\left(\frac{y}{\sqrt{\mathcal{U}(x^{'})}}\right)\vert
\end{aligned}
\end{equation}
From lemma \ref{lemmaBoot}, for any given $\xi>0$, $\exists \frac{1}{4}>\delta^{''} >0, N>0$ such that for any $n\geq N$,

\noindent $\mathbf{P}\left(\sup_{x,y\in[-m-1,m+1],\vert x-y\vert<\delta^{''}}\vert\widehat{\alpha}(x) - \widehat{\alpha}(y)\vert > \frac{\delta^{'}}{4}\right)<\xi$. If $\sup_{x,y\in[-m-1,m+1],\vert x-y\vert<\delta^{''}}\vert\widehat{\alpha}(x) - \widehat{\alpha}(y)\vert \leq \frac{\delta^{'}}{4}$, then
\begin{equation}
\begin{aligned}
\mathbf{P}^*\left(\sup_{x\in[0,1]}\vert\widehat{\mathcal{M}}(x^{'}) - \widetilde{M}_m^*(x)\vert>\delta^{'}\right)\leq\mathbf{P}^*\left(\frac{\sqrt{n}\sup_{x\in\mathbf{R}}\vert F^{''}(x)\vert}{2}\times \left(x_f^T(X^TX)^{-1}X^Te^* - \frac{1}{n}\sum_{i=1}^ne^*_i\right)^2>\frac{\delta^{'}}{2}\right)\\
+\mathbf{P}^*\left(\vert x_f^T(X^TX)^{-1}X^Te^* - \frac{1}{n}\sum_{i=1}^ne^*_i\vert\geq\delta^{''}\right)\leq \left(\frac{\sqrt{n}\sup_{x\in\mathbf{R}}\vert F^{''}(x)\vert}{\delta^{'}}+\frac{1}{\delta^{''2}}\right)\mathbf{E}^*\left(x_f^T(X^TX)^{-1}X^Te^* - \frac{1}{n}\sum_{i=1}^ne^*_i\right)^2
\end{aligned}
\end{equation}
Since $\mathbf{E}^*\left(x_f^T(X^TX)^{-1}X^Te^* - \frac{1}{n}\sum_{i=1}^ne^*_i\right)^2\leq \frac{2\widehat{\sigma}^2}{n}\left(x_f^T(X^TX/n)^{-1}x_f + 1\right)$, from \eqref{SecIntermediate} we prove \eqref{resBoot}.
\end{proof}

\begin{lemma}
Suppose assumption 1. to 4., $\forall -\infty<r<s<\infty, \xi>0$, $\exists \delta>0$ such that
\begin{equation}
\limsup_{n\to\infty}\mathbf{P}\left(\sup_{x\in[r,s]}\sqrt{n}\left(G^*(x) - G^*(x - \frac{\delta}{\sqrt{n}})\right)\geq \xi\right)<\xi
\label{resLemmaS}
\end{equation}
\label{lemmaContinuouity}
$G^*$ is defined in section \ref{chp3}.
\end{lemma}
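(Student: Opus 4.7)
The plan is to rewrite $G^*(x)$ in a form where the $x$-dependence becomes a translation inside a fixed (conditional) CDF, so that the increment $G^*(x)-G^*(x-\delta/\sqrt n)$ can be controlled by the modulus of continuity of $\widehat\alpha$ from Lemma~\ref{lemmaBoot}. Since in the bootstrap world $\varepsilon^*$ is drawn independently of $\epsilon^*$ from $\widehat F$, conditioning on $\epsilon^*$ and setting $W^{*}=x_f^T(X^TX)^{-1}X^T\epsilon^{*}$ yields
\begin{equation*}
G^{*}(x)=\mathbf{E}^{*}\!\bigl[\widehat F(x+W^{*})-\widehat F^{-}(-x+W^{*})\bigr],
\end{equation*}
so $G^{*}(x)-G^{*}(x-\delta/\sqrt n)$ equals the sum of the two non-negative increments $\mathbf{E}^{*}[\widehat F(x+W^{*})-\widehat F(x-\delta/\sqrt n+W^{*})]$ and $\mathbf{E}^{*}[\widehat F^{-}(-x+\delta/\sqrt n+W^{*})-\widehat F^{-}(-x+W^{*})]$.

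The second step is to substitute $\sqrt n(\widehat F(y)-F(y))=\widehat\alpha(y)$ (and the analogous identity with $\widehat F^{-}$ and $\widehat\alpha^{-}$) into each $\sqrt n$-scaled increment. The deterministic piece $\sqrt n(F(y_1)-F(y_2))$ is bounded by $\delta\,\|F'\|_\infty$ via the mean value theorem, uniformly in $x$ and $W^{*}$, using assumption~1 (which makes $F'$ continuous with bounded second derivative, hence bounded). The remaining pieces are increments of $\widehat\alpha$ (resp.\ $\widehat\alpha^{-}$) over a window of length $\delta/\sqrt n$.

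The main obstacle is that $W^{*}$ is not uniformly bounded, while Lemma~\ref{lemmaBoot} provides a modulus of continuity of $\widehat\alpha$ only on compact intervals. To handle this, I would split $\mathbf{E}^{*}[\cdot]=\mathbf{E}^{*}[\cdot\,\mathbf{1}_{|W^{*}|\le\eta}]+\mathbf{E}^{*}[\cdot\,\mathbf{1}_{|W^{*}|>\eta}]$ for a fixed $\eta>0$. On the first event, once $n$ is large enough that $\delta/\sqrt n\le 1$, the arguments of $\widehat\alpha$ lie in the compact set $[r-\eta-1,\,s+\eta+1]$ and differ by at most $\delta/\sqrt n$, so applying Lemma~\ref{lemmaBoot} with a threshold $\xi_0=\xi/4$ (and transferring the modulus from $\widehat\alpha$ to $\widehat\alpha^{-}$ by left limits) bounds this contribution by $\xi/4$ on a $\mathbf{P}$-event of probability at least $1-\xi/4$. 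On the tail $\{|W^{*}|>\eta\}$, the trivial estimate $|\widehat\alpha(y)|\le\sqrt n$ combined with Chebyshev's inequality and $\mathbf{E}^{*}(W^{*})^{2}=\widehat\sigma^{2}\,x_f^T(X^TX)^{-1}x_f=O_p(1/n)$ (by assumption~3 and \eqref{SigmaBound}) produces a contribution of order $\sqrt n\cdot O_p(1/n)/\eta^{2}=o_p(1)$.

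Summing the two symmetric terms gives, on a $\mathbf{P}$-event of probability at least $1-\xi/2$,
\begin{equation*}
\sup_{x\in[r,s]}\sqrt n\bigl(G^{*}(x)-G^{*}(x-\delta/\sqrt n)\bigr)\le 2\delta\,\|F'\|_\infty+\xi/2+o_p(1),
\end{equation*}
and choosing $\delta$ so that $2\delta\,\|F'\|_\infty<\xi/4$ yields \eqref{resLemmaS} for all sufficiently large $n$. The role of assumption~4 (which guarantees $F'>0$) is not needed here; what is essential is only the upper bound on $F'$ from assumption~1 and the compact-set modulus of continuity of the residual empirical process from Lemma~\ref{lemmaBoot}.
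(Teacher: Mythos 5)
Your argument is correct and takes essentially the same route as the paper's own proof: both begin from $G^*(x)=\mathbf{E}^*[\widehat F(x+W^*)-\widehat F^-(-x+W^*)]$ with $W^*=x_f^T(X^TX)^{-1}X^T\epsilon^*$, split $\widehat F=F+n^{-1/2}\widehat\alpha$, bound the $F$-part using boundedness of the derivatives of $F$, and control the $\widehat\alpha$-part via the compact-interval modulus of continuity from Lemma~\ref{lemmaBoot} together with a Chebyshev truncation of $W^*$ (the paper's equations~\eqref{Gs}, \eqref{downHalf}, and \eqref{continuityAlpha}). The only cosmetic differences are that the paper re-centres each composition around $W^*=0$ and uses a second-order Taylor remainder exploiting $\mathbf{E}^*W^*=0$, where you apply the mean value theorem directly to the $\delta/\sqrt{n}$-increment and invoke $\sup_x\vert F'(x)\vert<\infty$; and that the compact interval on which you invoke Lemma~\ref{lemmaBoot} should be taken symmetric, e.g.\ $[-\max(\vert r\vert,s)-\eta-1,\,\max(\vert r\vert,s)+\eta+1]$, so that it also contains the arguments of the $\widehat\alpha^{-}(-x+\cdot)$ terms.
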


\begin{proof}
We adopt the notations in lemma \ref{lemmaBoot}. By conditioning on $\epsilon^*$,
\begin{equation}
\begin{aligned}
G^*(x) = \mathbf{E}^*\mathbf{P}^*\left(\vert\varepsilon^* - x_f^T(X^TX)^{-1}X^T\epsilon^*\vert\leq x\Big|\epsilon^*\right) = \mathbf{E}^*\widehat{F}(x + x_f^T(X^TX)^{-1}X^T\epsilon^*) - \mathbf{E}^*\widehat{F}^-(-x + x_f^T(X^TX)^{-1}X^T\epsilon^*)\\
= \left(F(x) - F(-x)\right) + \left(\mathbf{E}^*\left(F(x + x_f^T(X^TX)^{-1}X^T\epsilon^*) - F(x)\right)  - \mathbf{E}^*\left(F(-x + x_f^T(X^TX)^{-1}X^T\epsilon^*) - F(-x)\right)\right)\\
+ \frac{\left(\widehat{\alpha}(x)-\widehat{\alpha}^-(-x)\right)}{\sqrt{n}}+ \left(\frac{1}{\sqrt{n}}\mathbf{E}^*\left(\widehat{\alpha}(x + x_f^T(X^TX)^{-1}X^T\epsilon^*) - \widehat{\alpha}(x)\right)
- \frac{1}{\sqrt{n}}\mathbf{E}^*\left(\widehat{\alpha}^{-}(-x + x_f^T(X^TX)^{-1}X^T\epsilon^*) - \widehat{\alpha}^-(-x)\right)\right)
\end{aligned}
\label{Gs}
\end{equation}
Therefore, for $\forall \frac{1}{4}>\delta>0$,
\begin{equation}
\begin{aligned}
\sup_{x\in[r,s]}\sqrt{n}\left(G^*(x) - G^*(x - \frac{\delta}{\sqrt{n}})\right)\leq 2\delta\sup_{x\in[-s-1,s+1]}\vert F^{'}(x)\vert + \frac{2\sup_{x\in\mathbf{R}}\vert F^{''}(x)\vert\widehat{\sigma}^2}{\sqrt{n}}\left(x_f^T\left(\frac{X^TX}{n}\right)^{-1}x_f\right)\\
+ 2\sup_{x,y\in[-s-1,s+1],\vert x - y\vert\leq \frac{\delta}{\sqrt{n}}}\vert\widehat{\alpha}(x) - \widehat{\alpha}(y)\vert + 4\sup_{x\in[-s-1,s+1]}\mathbf{E}^*\vert\widehat{\alpha}(x + x_f^T(X^TX)^{-1}X^T\epsilon^*) - \widehat{\alpha}(x)\vert
\end{aligned}
\label{downHalf}
\end{equation}
From lemma \ref{lemmaBoot} and \eqref{continuityAlpha}, we prove \eqref{resLemmaS}.
\end{proof}
Suppose assumption 1. to 4., from \eqref{Gs}, \eqref{downHalf}, \eqref{Ts}, and \eqref{almostSure},
\begin{equation}
\begin{aligned}
\sup_{x>0}\vert G^*(x) - (F(x) - F(-x))\vert\leq 2\sup_{x\in\mathbf{R}}\vert\widehat{F}(x) - F(x)\vert + \frac{\sup_{x\in\mathbf{R}}\vert F^{''}(x)\vert\widehat{\sigma}^2}{n}\left(x_f^T\left(\frac{X^TX}{n}\right)^{-1}x_f\right)
\end{aligned}
\label{gapC}
\end{equation}
implies $\forall \xi>0$, $\lim_{n\to\infty}\mathbf{P}\left(\sup_{x>0}\vert G^*(x) - (F(x) - F(-x))\vert>\xi\right) = 0$. If $\sup_{x>0}\vert G^*(x) - (F(x) - F(-x))\vert\leq\xi$, by defining $c_{1-\alpha}$ such that $F(c_{1-\alpha}) - F(-c_{1-\alpha}) = 1-\alpha$,
\begin{equation}
G^*(c_{1-\alpha+2\xi})\geq 1-\alpha+\xi,\ G^*(c_{1-\alpha-2\xi})\leq 1-\alpha-\xi\Rightarrow c_{1-\alpha - 2\xi}\leq c^*_{1-\alpha}\leq c_{1-\alpha+2\xi},\ \forall 2\xi<\alpha<1-2\xi
\label{quantileC}
\end{equation}

\begin{proof}[proof of theorem \ref{THMAS}]
We choose $r,s$ in lemma \ref{thmBOOT} as $c_{(1-\alpha)/4}, c_{1-\alpha/4}$, here $F(c_{z}) - F(-c_{z}) = z$, $\forall z\in(0,1)$. From \eqref{gapD}, \eqref{gapC} and \eqref{quantileC}, for sufficiently small $\xi>0$, with probability tending to $1$, $d^*_{1-\gamma}(c^*_{1-\alpha})\leq \sup_{x\in[c_{1-\alpha-2\xi},c_{1-\alpha+2\xi}]}d^*_{1-\gamma}(x)\leq \sup_{x\in[c_{1-\alpha-2\xi},c_{1-\alpha+2\xi}]}\sqrt{\mathcal{U}(x)}d_{1-\gamma+\xi}$; and $d^*_{1-\gamma}(c^*_{1-\alpha})\geq \inf_{x\in[c_{1-\alpha-2\xi},c_{1-\alpha+2\xi}]}d^*_{1-\gamma}(x)\geq \inf_{x\in[c_{1-\alpha-2\xi},c_{1-\alpha+2\xi}]}\sqrt{\mathcal{U}(x)}d_{1-\gamma-2\xi}$, here $d_{1-\gamma} = \Phi^{-1}(1-\gamma)$. We define $\overline{d} = \sup_{x\in[c_{1-\alpha-2\xi},c_{1-\alpha+2\xi}]}\sqrt{\mathcal{U}(x)}d_{1-\gamma+\xi}$, and $\underline{d} = \inf_{x\in[c_{1-\alpha-2\xi},c_{1-\alpha+2\xi}]}\sqrt{\mathcal{U}(x)}d_{1-\gamma-2\xi}$. With probability tending to $1$, $c^*(1-\alpha,1-\gamma)\leq c^*_{1-\alpha + \frac{\overline{d}}{\sqrt{n}}}\leq c_{1-\alpha+\frac{\overline{d}}{\sqrt{n}}+2\xi}$; and $c^*(1-\alpha,1-\gamma)\geq c^*_{1-\alpha+\frac{\underline{d}}{\sqrt{n}}}\geq c_{1-\alpha + \frac{\underline{d}}{\sqrt{n}}-2\xi}$. We define $\overline{c} = c_{1-\alpha+\frac{\overline{d}}{\sqrt{n}}+2\xi}$, and $\underline{c} = c_{1-\alpha + \frac{\underline{d}}{\sqrt{n}}-2\xi}$. From assumption 1) and 4), $c_\alpha$ is continuous in $\alpha\in(0,1)$; and $\mathcal{U}(x)$ is continuous in $\mathbf{R}$. Since
\begin{equation}
\begin{aligned}
\sqrt{n}\left(\mathbf{P}^*\left(\vert y_f - x^T_f\widehat{\beta}\vert\leq c^*(1-\alpha,1-\gamma)\right) - (1-\alpha)\right) = \mathcal{S}(c_{1-\alpha}) + \left(\mathcal{S}(c^*(1-\alpha,1-\gamma)) - \mathcal{S}(c_{1-\alpha})\right)\\
+ \sqrt{n}\left(G^*(c^*(1-\alpha,1-\gamma)) - (1-\alpha + \frac{d^*_{1-\gamma}(c^*_{1-\alpha})}{\sqrt{n}})\right) + \sqrt{\mathcal{U}(c_{1-\alpha})}d_{1-\gamma} + \left(d^*_{1-\gamma}(c^*_{1-\alpha}) - \sqrt{\mathcal{U}(c_{1-\alpha})}d_{1-\gamma}\right)
\end{aligned}
\label{Gst1}
\end{equation}
we choose $r = \underline{c}$ and $s = \overline{c}$ in lemma \ref{lemmaContinuouity}. With probability tending to $1$,
\begin{equation}
\vert\sqrt{n}\left(G^*(c^*(1-\alpha,1-\gamma)) - (1-\alpha + \frac{d^*_{1-\gamma}(c^*_{1-\alpha})}{\sqrt{n}})\right)\vert\leq \sqrt{n}\left(G^*(c^*(1-\alpha,1-\gamma)) - G^*\left(c^*(1-\alpha,1-\gamma) - \frac{1}{n}\right)\right)<\xi
\end{equation}
We choose a positive integer $m > \overline{c} + 1$. From \eqref{S_and_Mm}, theorem \ref{theoPd}, and lemma \ref{lemmaExt}, with probability tending to $1$,
\begin{equation}
\begin{aligned}
\vert \mathcal{S}(c^*(1-\alpha,1-\gamma)) - \mathcal{S}(c_{1-\alpha})\vert\leq \sup_{x\in[\underline{c}, \overline{c}]}\vert\mathcal{S}(x) - \mathcal{S}(c_{1-\alpha})\vert\\
\leq 2\sup_{x\in[\underline{c}, \overline{c}]}\vert\mathcal{S}(x) - \left(\widetilde{\mathcal{M}}_m(\frac{x+m}{2m}) - \widetilde{\mathcal{M}}_m^-(\frac{-x+m}{2m})\right)\vert + 2\sup_{y,z\in[0,1], \vert y - z\vert\leq \frac{\overline{c} - \underline{c}}{2m}}\vert \widetilde{\mathcal{M}}_m(y) - \widetilde{\mathcal{M}}_m(z)\vert\\
\Rightarrow \text{for $\forall \xi>0$, }\limsup_{n\to\infty}\mathbf{P}\left(\vert \mathcal{S}(c^*(1-\alpha,1-\gamma)) - \mathcal{S}(c_{1-\alpha})\vert>\xi\right)<\xi
\end{aligned}
\label{CsHalf}
\end{equation}
For $\mathcal{U}$ is continuous and $\vert d^*_{1-\gamma}(c^*_{1-\alpha}) - \sqrt{\mathcal{U}(c_{1-\alpha})}d_{1-\gamma}\vert\leq \vert\overline{d} - \sqrt{\mathcal{U}(c_{1-\alpha})}d_{1-\gamma}\vert+\vert\underline{d} - \sqrt{\mathcal{U}(c_{1-\alpha})}d_{1-\gamma}\vert$ with probability tending to $1$, we have for $\forall \xi>0$,
\begin{equation}
\lim_{n\to\infty}\mathbf{P}\left(\vert \sqrt{n}\left(\mathbf{P}^*\left(\vert y_f - x^T_f\widehat{\beta}\vert\leq c^*(1-\alpha,1-\gamma)\right) - (1-\alpha)\right) - \left(\mathcal{S}(c_{1-\alpha}) + \sqrt{\mathcal{U}(c_{1-\alpha})}d_{1-\gamma}\right)\vert >\xi\right) = 0
\label{DeltaSS}
\end{equation}
On one hand, from theorem \ref{theoPd}, $\forall \xi>0$, we choose $Z>0$ such that $\Phi\left(\frac{Z}{\sqrt{\mathcal{U}(c_{1-\alpha})}}\right) - \Phi\left(\frac{-Z}{\sqrt{\mathcal{U}(c_{1-\alpha})}}\right)>1-\xi$, we have $\lim_{n\to\infty}\mathbf{P}(\vert\mathcal{S}(c_{1-\alpha})\vert\leq Z) >1-\xi$. On the other hand, for any given $\xi\in\mathbf{R}$,
\begin{equation}
\begin{aligned}
\lim_{n\to\infty}\mathbf{P}\left(\mathcal{S}(c_{1-\alpha}) + \sqrt{\mathcal{U}(c_{1-\alpha})}d_{1-\gamma}+\xi\geq 0\right) = 1 - \Phi\left(-d_{1-\gamma} - \frac{\xi}{\sqrt{\mathcal{U}(c_{1-\alpha})}}\right) = 1 - \Phi\left(d_{\gamma} - \frac{\xi}{\sqrt{\mathcal{U}(c_{1-\alpha})}}\right)
\end{aligned}
\end{equation}
Combine with \eqref{DeltaSS}, we prove theorem \ref{THMAS}.
\end{proof}

\begin{proof}[Proof of corollary \ref{coroF}]
From theorem 10.1 in \cite{Linreg} and assumption 3., $\widehat{r}_i^{'} = \widehat{\epsilon}_i^{'}/(1-h_i)$ with $h_i = x_i^T(X^TX)^{-1}x_i$, and  $\exists C > 0$ such that $h_i\leq C/n$ for $i=1,2,...,n$. From Cauchy inequality, for sufficiently large $n$
\begin{equation}
\begin{aligned}
\widehat{r}_i = \frac{\widehat{\epsilon}_i}{1-h_i} + \frac{1}{n}\sum_{j=1}^n\frac{(h_i-h_j)\widehat{\epsilon}^{'}_j}{(1-h_i)(1-h_j)}\\
\Rightarrow \sum_{i=1}^n (\widehat{r}_i - \widehat{\epsilon}_i)^2\leq \sum_{i=1}^n\frac{2h_i^2\widehat{\epsilon}_i^2}{(1-h_i)^2} + \frac{2}{n^2}\sum_{i=1}^n\sum_{j=1}^n\frac{(h_i-h_j)^2}{(1-h_i)^2(1-h_j)^2}\sum_{j=1}^n\widehat{\epsilon}_j^{'2}\leq \frac{4C^2}{n^2}\sum_{i=1}^n\widehat{\epsilon}_i^2 + \frac{16C^2}{n^2}\sum_{j=1}^n\widehat{\epsilon}_j^{'2}\\
\Rightarrow \mathbf{E}\sum_{i=1}^n (\widehat{r}_i - \widehat{\epsilon}_i)^2 \leq \frac{4C^2}{n^2}\sum_{i=1}^n\mathbf{E}\widehat{\epsilon}_i^2 + \frac{16C^2}{n^2}\sum_{j=1}^n\mathbf{E}\widehat{\epsilon}_j^{'2}\leq \frac{20C^2}{n^2}\times (2n\sigma^2 + 2\sigma^2\sum_{i=1}^nx_i^T(X^TX)^{-1}x_i)
\end{aligned}
\label{DTS}
\end{equation}
We define a random variable $(\epsilon^*_1, r^*_1)\in\mathbf{R}^2$ having probability mass $1/n$ on $(\widehat{\epsilon}_i,\widehat{r}_i), i = 1,2,...,n$. We generate i.i.d. random variables $(\epsilon^*_i, r^*_i), i = 1,2,...,n$ and $(\epsilon^*_f, r^*_f)$ with the same distribution as $(\epsilon^*_1, r^*_1)$. We denote $\epsilon^* = (\epsilon^*_1,...,\epsilon_n^*)^T$, and $r^* = (r^*_1,...,r^*_n)^T$. For any given $0 < r < s < \infty, \xi>0$, we choose $\delta = C/n^{3/4}$ in \eqref{Deltas} with $C$ a constant,
\begin{equation}
\begin{aligned}
\sup_{x\in[r,s]}\vert G^*(x) - \mathcal{G}^*(x)\vert\leq \mathbf{P}^*\left(\vert\vert\epsilon^*_f - x_f^T(X^TX)^{-1}X^T\epsilon^*\vert - \vert r_f^* - x_f^T(X^TX)^{-1}X^Tr^*\vert\vert > \frac{C}{n^{3/4}}\right)\\
+\sup_{x\in[r,s]}\mathbf{P}^*\left(x-\frac{C}{n^{3/4}}<\vert\epsilon^*_f - x_f^T(X^TX)^{-1}X^T\epsilon^*\vert\leq x + \frac{C}{n^{3/4}}\right)
\leq \frac{4\sqrt{n}}{C^2}\sum_{i = 1}^n(\widehat{\epsilon}_i - \widehat{r}_i)^2\\
+ \frac{4x^T_f(X^TX/n)^{-1}x_f}{C^2\sqrt{n}}\times\sum_{i = 1}^n(\widehat{\epsilon}_i - \widehat{r}_i)^2 +\sup_{x\in[r,s]}\left(G^*(x + \frac{C}{n^{3/4}}) - G^*(x - \frac{C}{n^{3/4}})\right)\\
\end{aligned}
\label{Gsn}
\end{equation}
and for sufficiently large $n$,
\begin{equation}
\begin{aligned}
\sup_{x\geq r}\vert\mathcal{G}^*(x) - (F(x) - F(-x))\vert\leq \frac{4\sqrt{n}}{C^2}\sum_{i = 1}^n(\widehat{\epsilon}_i - \widehat{r}_i)^2
+ \frac{4x^T_f(X^TX/n)^{-1}x_f}{C^2\sqrt{n}}\times\sum_{i = 1}^n(\widehat{\epsilon}_i - \widehat{r}_i)^2\\
+ 3\sup_{x>0}\vert G^*(x) - (F(x) - F(-x))\vert + \sup_{x\geq r}\left( F(x+\frac{C}{n^{3/4}}) - F(x - \frac{C}{n^{3/4}})\right) + \sup_{x\geq r}\left(F\left(-x+\frac{C}{n^{3/4}}\right) - F\left(-x-\frac{C}{n^{3/4}}\right)\right)
\end{aligned}
\label{ProbG}
\end{equation}
Lemma \ref{lemmaContinuouity} and \eqref{gapC} imply $\lim_{n\to\infty}\mathbf{P}\left(\sqrt{n}\sup_{x\in[r,s]}\vert G^*(x) - \mathcal{G}^*(x)\vert > \xi\right) = 0$; and

\noindent$\lim_{n\to\infty}\mathbf{P}\left(\sup_{x\geq r}\vert \mathcal{G}^*(x) - (F(x) - F(-x))\vert > \xi\right) = 0$.

We define $\widehat{\mathcal{F}}(x) = \frac{1}{n}\sum_{i = 1}^n\mathbf{1}_{\widehat{r}_i\leq x}$, and $\widehat{\alpha}(x)$ as in lemma \ref{lemmaBoot}. For any given $-\infty<r<s<\infty, \xi>0$, and sufficiently large $n$, lemma \ref{lemmaBoot} implies
\begin{equation}
\begin{aligned}
\sup_{x\in[r,s]}\vert\widehat{\mathcal{F}}(x) - \widehat{F}(x)\vert\leq \frac{1}{n}\sum_{i = 1}^n\mathbf{1}_{\vert\widehat{r}_i - \widehat{\epsilon}_i\vert>\frac{C}{n^{3/4}}} + \sup_{x\in[r,s]}\frac{1}{n}\sum_{i = 1}^n\mathbf{1}_{x - \frac{C}{n^{3/4}}<\widehat{\epsilon}_i\leq x + \frac{C}{n^{3/4}}}\\
\Rightarrow \mathbf{P}\left(\sqrt{n}\sup_{x\in[r,s]}\vert\widehat{\mathcal{F}}(x) - \widehat{F}(x)\vert > \xi\right)\leq \frac{2}{\sqrt{n}\xi}\sum_{i = 1}^n\mathbf{P}\left(\vert\widehat{\epsilon}_i - \widehat{r}_i\vert>\frac{C}{n^{3/4}}\right) + \mathbf{P}\left(\sup_{x\in[r-1,s+1]}\vert\widehat{\alpha}(x) - \widehat{\alpha}(x - \frac{2C}{n^{3/4}})\vert > \frac{\xi}{4}\right)\\
+\mathbf{P}\left(\sup_{x\in[r-1,s+1]} F^{'}(x)\times \frac{2C}{n^{1/4}}>\frac{\xi}{4}\right)\Rightarrow \lim_{n\to\infty}\mathbf{P}\left(\sqrt{n}\sup_{x\in[r,s]}\vert\widehat{\mathcal{F}}(x) - \widehat{F}(x)\vert > \xi\right) = 0
\end{aligned}
\label{DeltaF}
\end{equation}
Here $C$ is an arbitrary large positive constant.

We define $\widehat{\mathcal{M}}$ and $\widehat{\mathcal{S}}$ as in \eqref{JJYYS}; define $\Lambda(z) = x_f^T(X^TX)^{-1}X^Tz-\frac{1}{n}\sum_{i=1}^nz_i, \forall z = (z_1,...,z_n)^T\in\mathbf{R}^n$; and define
\begin{equation}
\widehat{\mathcal{N}}(x) = \sqrt{n}\left(\widehat{\mathcal{F}}\left(x+\Lambda(u^*)\right) - \frac{1}{n}\sum_{i=1}^n\mathbf{1}_{u^*_i\leq x}\right),\ \
\widehat{\mathcal{T}}(x) = \widehat{\mathcal{N}}(x) -  \widehat{\mathcal{N}}^-(-x)
\end{equation}
Here $u^* = (u^*_1,...,u^*_n)^T$ are i.i.d. random variables generated by drawing from $\widehat{r}$ with replacement. For any given $0<r<s<\infty$ and $\xi>0$,
\begin{equation}
\begin{aligned}
\mathbf{P}^*\left(\sup_{x\in[r,s]}\vert\widehat{\mathcal{S}}(x) - \widehat{\mathcal{T}}(x)\vert>4\xi\right)\leq \mathbf{P}^*\left(\sup_{x\in[r,s]}\vert\widehat{\mathcal{M}}(x) - \widehat{\mathcal{N}}(x)\vert>2\xi\right) + \mathbf{P}^*\left(\sup_{x\in[r,s]}\vert\widehat{\mathcal{M}}^-(-x) - \widehat{\mathcal{N}}^-(-x)\vert>2\xi\right)\\
\leq \mathbf{P}^*\left(\sup_{x\in[r,s]}\sqrt{n}\vert\widehat{F}(x + \Lambda(\epsilon^*)) - \widehat{\mathcal{F}}(x + \Lambda(u^*))\vert > \xi\right) + \mathbf{P}^*\left(\sup_{x\in[r,s]}\vert\frac{1}{\sqrt{n}}\sum_{i = 1}^n\mathbf{1}_{e^*_i\leq x} - \frac{1}{\sqrt{n}}\sum_{i = 1}^n\mathbf{1}_{u^*_i\leq x}\vert>\xi\right)\\
+\mathbf{P}^*\left(\sup_{x\in[r/2, s+1]}\sqrt{n}\vert\widehat{F}(-x + \Lambda(\epsilon^*)) - \widehat{\mathcal{F}}(-x + \Lambda(u^*))\vert > \xi\right) + \mathbf{P}^*\left(\sup_{x\in[r/2, s+1]}\vert\frac{1}{\sqrt{n}}\sum_{i = 1}^n\mathbf{1}_{e^*_i\leq -x} - \frac{1}{\sqrt{n}}\sum_{i = 1}^n\mathbf{1}_{u^*_i\leq -x}\vert>\xi\right)\\
\end{aligned}
\end{equation}
If $\sup_{x\in[-s-2, s+2]}\sqrt{n}\vert\widehat{\mathcal{F}}(x) - \widehat{F}(x)\vert<\xi/4$, and $\sup_{x,y\in[-s-2,s+2],\vert x-y\vert<\delta}\vert\widehat{\alpha}(x) - \widehat{\alpha}(y)\vert \leq \xi/8$ with $0<\delta<1/8$,
\begin{equation}
\begin{aligned}
\mathbf{P}^*\left(\sup_{x\in[r,s]}\sqrt{n}\vert\widehat{F}(x + \Lambda(\epsilon^*)) - \widehat{\mathcal{F}}(x + \Lambda(u^*))\vert > \xi\right), \mathbf{P}^*\left(\sup_{x\in[r/2, s+1]}\sqrt{n}\vert\widehat{F}(-x + \Lambda(\epsilon^*)) - \widehat{\mathcal{F}}(-x + \Lambda(u^*))\vert > \xi\right)\\
\leq  \mathbf{P}^*\left(\sup_{x\in[-s-1,s+1]}\sqrt{n}\vert\widehat{F}(x + \Lambda(\epsilon^*)) - \widehat{F}(x + \Lambda(u^*))\vert > \xi/2\right) + \mathbf{P}^*\left(\sup_{x\in[-s-1,s+1]}\sqrt{n}\vert\widehat{F}(x + \Lambda(u^*)) - \widehat{\mathcal{F}}(x + \Lambda(u^*))\vert>\xi/2\right)\\
\leq \mathbf{P}^*\left(\sup_{x\in[-s-1,s+1]}\vert\widehat{\alpha}(x + \Lambda(\epsilon^*)) - \widehat{\alpha}(x + \Lambda(u^*))\vert>\xi/4\right) + \mathbf{P}^*\left(\sup_{x\in[-s-1,s+1]}\sqrt{n}\vert F(x + \Lambda(\epsilon^*)) - F(x + \Lambda(u^*) )\vert > \xi/4\right)\\
+ \mathbf{P}^*\left(\vert\Lambda(u^*)\vert>1\right)\leq 2\mathbf{P}^*(\vert\Lambda(\epsilon^*)\vert > \delta/4) + 3\mathbf{P}^*(\vert\Lambda(u^*)\vert>\delta/4) + \mathbf{P}^*\left(\sup_{x\in[-s-2,s+2]}\sqrt{n}\vert F^{'}(x)\vert\times\vert\Lambda(\epsilon^*) - \Lambda(u^*)\vert>\xi/4\right)
\end{aligned}
\end{equation}
For $\mathbf{E}^*\Lambda(\epsilon^*)^2\leq \frac{2\widehat{\sigma}^2}{n}\left(x_f^T\left(\frac{X^TX}{n}\right)^{-1}x_f + 1\right)$; $\mathbf{E}^*(\Lambda(u^*)-\Lambda(\epsilon^*))^2\leq \frac{2}{n}\left(x_f^T\left(\frac{X^TX}{n}\right)^{-1}x_f + 1\right)\times \frac{1}{n}\sum_{i = 1}^n(\widehat{\epsilon}_i - \widehat{r}_i)^2$, \eqref{SigmaBound}, \eqref{DTS}, \eqref{DeltaF}, and lemma \ref{lemmaBoot} imply $\lim_{n\to\infty}\mathbf{P}\left(\mathbf{P}^*\left(\sup_{x\in[r,s]}\sqrt{n}\vert\widehat{F}(x + \Lambda(\epsilon^*)) - \widehat{\mathcal{F}}(x + \Lambda(u^*))\vert > \xi\right)>\xi\right) = 0$;

\noindent and $\lim_{n\to\infty}\mathbf{P}\left(\mathbf{P}^*\left(\sup_{x\in[r/2, s+1]}\sqrt{n}\vert\widehat{F}(-x + \Lambda(\epsilon^*)) - \widehat{\mathcal{F}}(-x + \Lambda(u^*))\vert > \xi\right)>\xi\right) = 0$. On the other hand, we define $\widetilde{\alpha}^*$ as in lemma \ref{lemmaBoot}; from \eqref{Deltas}, for any $0<\delta<1/4$,
\begin{equation}
\begin{aligned}
\mathbf{P}^*\left(\sup_{x\in[r,s]}\vert\frac{1}{\sqrt{n}}\sum_{i = 1}^n\mathbf{1}_{e^*_i\leq x} - \frac{1}{\sqrt{n}}\sum_{i = 1}^n\mathbf{1}_{u^*_i\leq x}\vert>\xi\right), \mathbf{P}^*\left(\sup_{x\in[r/2, s+1]}\vert\frac{1}{\sqrt{n}}\sum_{i = 1}^n\mathbf{1}_{e^*_i\leq -x} - \frac{1}{\sqrt{n}}\sum_{i = 1}^n\mathbf{1}_{u^*_i\leq -x}\vert>\xi\right)\\
\leq \mathbf{P}^*\left(\frac{1}{\sqrt{n}}\sum_{i = 1}^n\mathbf{1}_{\vert e^*_i - u^*_i\vert>\frac{\delta}{\sqrt{n}}}>\xi/2\right) + \mathbf{P}^*\left(\sup_{x\in[-s-1,s+1]}\frac{1}{\sqrt{n}}\sum_{i = 1}^n\mathbf{1}_{x-\frac{\delta}{\sqrt{n}}<e^*_i\leq x+\frac{\delta}{\sqrt{n}}}>\xi/2\right)\\
\leq \frac{2\sqrt{n}}{\xi}\mathbf{P}^*\left(\vert e^*_1 - u^*_1\vert>\frac{\delta}{\sqrt{n}}\right) + \mathbf{P}^*\left(\sup_{x\in[-s-2, s+2]}\vert\widetilde{\alpha}^*(x) - \widetilde{\alpha}^*(x - \frac{2\delta}{\sqrt{n}})\vert>\xi/4\right)\\
+ \mathbf{P}^*\left(\sup_{x\in[-s-1,s+1]}\sqrt{n}\vert\widehat{F}(x) - \widehat{F}(x- \frac{2\delta}{\sqrt{n}})\vert > \xi/4\right)
\end{aligned}
\end{equation}
Since $\mathbf{P}^*\left(\vert e^*_1 - u^*_1\vert>\frac{\delta}{\sqrt{n}}\right)\leq \frac{\sum_{i = 1}^n(\widehat{\epsilon}_i - \widehat{r}_i)^2}{\delta^2}$, \eqref{DTS} and lemma \ref{lemmaBoot} imply

\noindent$\lim_{n\to\infty}\mathbf{P}\left(\mathbf{P}^*\left(\sup_{x\in[r,s]}\vert\frac{1}{\sqrt{n}}\sum_{i = 1}^n\mathbf{1}_{e^*_i\leq x} - \frac{1}{\sqrt{n}}\sum_{i = 1}^n\mathbf{1}_{u^*_i\leq x}\vert>\xi\right)>\xi\right) = 0$;

\noindent and $\lim_{n\to\infty}\mathbf{P}\left(\mathbf{P}^*\left(\sup_{x\in[r/2, s+1]}\vert\frac{1}{\sqrt{n}}\sum_{i = 1}^n\mathbf{1}_{e^*_i\leq -x} - \frac{1}{\sqrt{n}}\sum_{i = 1}^n\mathbf{1}_{u^*_i\leq -x}\vert>\xi\right)>\xi\right)= 0$. In particular, $\forall \xi>0$,
\begin{equation}
\begin{aligned}
\lim_{n\to\infty}\mathbf{P}\left(\mathbf{P}^*\left(\sup_{x\in[r,s]}\vert\widehat{\mathcal{S}}(x) - \widehat{\mathcal{T}}(x)\vert>\xi\right)>\xi\right) = 0
\end{aligned}
\end{equation}
For $\forall \xi>0$,
\begin{equation}
\begin{aligned}
\sup_{x\in[r,s],y\in\mathbf{R}}\vert\mathbf{P}^*\left(\widehat{\mathcal{T}}(x)\leq y\right) - \Phi\left(\frac{y}{\sqrt{\mathcal{U}(x)}}\right)\vert\leq \mathbf{P}^*\left(\sup_{x\in[r,s]}\vert\widehat{\mathcal{S}}(x) - \widehat{\mathcal{T}}(x)\vert>\xi\right)\\
+3\sup_{x\in[r,s],y\in\mathbf{R}}\vert\mathbf{P}^*\left(\widehat{\mathcal{S}}(x)\leq y\right) - \Phi\left(\frac{y}{\sqrt{\mathcal{U}(x)}}\right)\vert+ \sup_{x\in[r,s],y\in\mathbf{R}}\left(\Phi\left(\frac{y+\xi}{\sqrt{\mathcal{U}(x)}}\right) - \Phi\left(\frac{y-\xi}{\sqrt{\mathcal{U}(x)}}\right)\right)
\end{aligned}
\label{Taus}
\end{equation}
Lemma \ref{thmBOOT} implies $\lim_{n\to\infty}\mathbf{P}\left(\sup_{x\in[r,s],y\in\mathbf{R}}\vert\mathbf{P}^*\left(\widehat{\mathcal{T}}(x)\leq y\right) - \Phi\left(\frac{y}{\sqrt{\mathcal{U}(x)}}\right)\vert>\xi\right) = 0$.

We choose $\frac{1}{8}\min(\alpha,1-\alpha)>\xi>0$. From \eqref{gapD} and \eqref{Taus}, with probability tending to $1$, $\forall 2\xi<1-\gamma<1-\xi, r\leq x\leq s$, $\sqrt{\mathcal{U}(x)}d_{1-\gamma-2\xi}\leq D^*_{1-\gamma}(x)\leq\sqrt{\mathcal{U}(x)}d_{1-\gamma+\xi}$. We define $c_z, z\in(0,1)$ and $\underline{d},\overline{d}$ as in the proof of theorem \ref{THMAS}. We choose $r = c_{(1-\alpha)/8}>0$ in \eqref{ProbG}, with probability tending to 1, $c_{\tau-2\xi}\leq C^*_{\tau}\leq c_{\tau+\xi},\forall (1-\alpha)/8+2\xi<\tau<1-2\xi$. In particular, this implies $c_{1-\alpha-2\xi}\leq C^*_{1-\alpha}\leq c_{1-\alpha+\xi}$, and $\underline{d}\leq D^*_{1-\gamma}(C^*_{1-\alpha})\leq \overline{d}$. We choose $r = c_{(1-\alpha)/8}$ and $s = c_{1-\alpha + 4\xi}$ in \eqref{Gsn} and lemma \ref{lemmaContinuouity}, $C^*(1-\alpha,1-\gamma)\leq C^*_{1-\alpha+\frac{\overline{d}}{\sqrt{n}}}\leq c^*_{1-\alpha+\frac{\overline{d}+2\xi}{\sqrt{n}}}$; and $C^*(1-\alpha,1-\gamma)\geq C^*_{1-\alpha+\frac{\underline{d}}{\sqrt{n}}}\geq c^*_{1-\alpha+\frac{\underline{d}-3\xi}{\sqrt{n}}}$. We define $\mathcal{S}$ and $\mathcal{U}$ as in \eqref{Sxxxx} and \eqref{VarFun}, since
\begin{equation}
\begin{aligned}
\vert \sqrt{n}\left(\mathbf{P}^*\left(\vert y_f - x^T_f\widehat{\beta}\vert\leq C^*(1-\alpha,1-\gamma)\right) - (1-\alpha)\right) - \left(\mathcal{S}(c_{1-\alpha}) + \sqrt{\mathcal{U}(c_{1-\alpha})}d_{1-\gamma}\right)\vert\\
\leq \vert\sqrt{n}\left(\mathbf{P}^*\left(\vert y_f - x^T_f\widehat{\beta}\vert\leq c^*_{1-\alpha+\frac{\overline{d}+2\xi}{\sqrt{n}}}\right) - (1-\alpha)\right) - \left(\mathcal{S}(c_{1-\alpha}) + \sqrt{\mathcal{U}(c_{1-\alpha})}d_{1-\gamma}\right)\vert\\
+\vert\sqrt{n}\left(\mathbf{P}^*\left(\vert y_f - x^T_f\widehat{\beta}\vert\leq c^*_{1-\alpha+\frac{\underline{d}-3\xi}{\sqrt{n}}}\right) - (1-\alpha)\right) - \left(\mathcal{S}(c_{1-\alpha}) + \sqrt{\mathcal{U}(c_{1-\alpha})}d_{1-\gamma}\right)\vert
\end{aligned}
\end{equation}

Replace $c^*(1-\alpha,1-\gamma)$ in \eqref{Gst1} to \eqref{CsHalf} by $c^*_{1-\alpha+\frac{\overline{d}+2\xi}{\sqrt{n}}}$ and $c^*_{1-\alpha+\frac{\underline{d}-3\xi}{\sqrt{n}}}$, and set $\xi\to 0$, we prove \eqref{PrdTheo}.
\end{proof}

\end{document}